\newtheorem{theorem}{Theorem}
\newtheorem{problem}{Problem}
\newtheorem{proposition}{Proposition}
\newtheorem{lemma}{Lemma}
\newtheorem{corollary}{Corollary}
\theoremstyle{remark}
\newtheorem{remark}{Remark}
\newtheorem*{conjecture}{\bf Conjecture}
\newtheorem{definition}{Definition}
\newcommand\Q{\mathbb{Q}}
\newcommand\R{\mathbb{R}}
\newcommand\Z{\mathbb{Z}}
\newcommand\N{\mathbb{N}}
\newcommand{\ve}{\varepsilon}
\newcommand\II{\mathbb{I}}
\newcommand\cB{\mathcal{B}}
\newcommand\cC{\mathcal{C}}
\newcommand\cQ{\mathcal{Q}}
\newcommand\cN{\mathcal{N}}
\newcommand\cS{\mathcal S}
\newcommand\cW{\mathcal W}
\newcommand\cK{\mathcal K}
\newcommand\cM{\mathcal M}
\newcommand\cV{\mathcal V}
\newcommand{\vvv}[1]{{\mathbf{#1}}}
\newcommand{\Rp}{\R_{+}}
\newcommand{\Mmn}{\mathbb{M}_{n,m}}
\newcommand{\Bad}{\mathbf{Bad}}
\newcommand{\Sing}{\mathbf{Sing}}
\newcommand{\DI}{\mathbf{DI}}
\newcommand{\dminVB}{d_{\min}}
\newcommand{\dminVBone}{d_{\min,1}}
\newcommand{\dminVBtwo}{d_{\min,2}}
\newcommand{\dminVBthree}{d_{\min,3}}
\newcommand{\dminVBi}{d_{\min,i}}
\newcommand{\chaptwo}{2}%{\ref{ch1}}
\newcommand{\chaptwosec}{}%Section~\ref{ch1sec1} of}
\begin{document}

\title[Number Theory meets Wireless Communications: an introduction]{Number Theory meets Wireless Communications: an introduction for dummies like us}
\author{Victor Beresnevich \and  Sanju Velani}

\begin{abstract}
In this chapter we introduce the theory of Diophantine approximation via a series of basic examples from information theory relevant to wireless communications. In particular, we discuss Dirichlet's theorem, badly approximable points, Dirichlet improvable and singular points, the metric (probabilistic) theory of Diophantine approximation including the Khintchine-Groshev theorem and the theory of  Diophantine approximation on manifolds. We explore various number theoretic approaches used in the analysis of communication characteristics such as Degrees of Freedom (DoF).
In particular, we  improve the result of Motahari
et al regarding
the DoF of a two-user X-channel. In essence, we show that
the total DoF can be achieved for all (rather than almost all) choices of channel
coefficients with the exception of a subset of strictly smaller dimension than the ambient space. The
improvement utilises the concept of jointly non-singular points that we introduce and a general result of
Kadyrov et al on the $\delta$-escape of mass in the space of lattices. We also discuss follow-up open problems that incorporate  a breakthrough of Cheung and more generally  Das et al on the dimension of the set of singular points.
\end{abstract}

\maketitle

\vspace*{5ex}

{\footnotesize\sf
\noindent \underline{Note}: The copy of the chapter, as displayed on this website, is a draft, pre-publication copy only. The final, published version of the Chapter is a part of an edited volume entitled ``Number Theory meets Wireless Communications'' which shall be available for purchase from the publisher (Springer Nature) and other standard distribution channels. This draft copy is made available for personal use only.

\thispagestyle{empty}

\vspace*{20ex}

\hangindent=5ex\hangafter=1
\noindent VB\,:~ Department of Mathematics, University of York, Heslington, York, YO10 5DD, UK\\
{\tt victor.beresnevich@york.ac.uk, victor.beresnevich@gmail.com}

\hangindent=5ex\hangafter=1
\noindent SV\,:~ Department of Mathematics, University of York, Heslington, York, YO10 5DD, UK\\
{\tt sanju.velani@york.ac.uk}

}

\newpage

\tableofcontents

\newpage

\section{Basic examples and fundamentals of Diophantine approximation}  \label{beeg}

Let us start by addressing a natural question that a number theorist or more generally a mathematician  who has picked up this book may well ask:
{\em  what is the role of number theory  in the world of wireless communications? } We will come clean straightaway and say that by number theory we essentially mean areas such as Diophantine approximation and the geometry of numbers, and by wireless communication we essentially mean the design and analysis of lattice/linear codes for wireless communications which thus falls in the realm of information theory. To begin with, with this confession in mind, let us start by describing the role of one-dimensional Diophantine approximation.
Recall, that at the heart of Diophantine approximation is the classical theorem of Dirichlet on rational approximations to real numbers.

\begin{theorem}[Dirichlet\index{Dirichlet's theorem}, 1842]\label{Dir}
  For any $\xi\in\R$ and any $Q \in \mathbb{N}$ there exist
  $p,q \in \mathbb{Z}$ such that
\begin{equation}\label{eqn01}
    \left| \xi - \frac{p}{q} \right| < \frac{1}{qQ}  \qquad \textrm{ and }  \qquad  1 \leq  q \leq Q \, .
  \end{equation}
\end{theorem}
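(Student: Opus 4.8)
The plan is to use the pigeonhole principle (Dirichlet's own ``box principle''), which is the most elementary route and keeps the argument self-contained. First I would reduce the problem to producing integers $p,q$ with $1\le q\le Q$ and $|q\xi-p|<1/Q$; dividing this inequality through by $q$ and using $q\ge 1$ then immediately yields the required bound $|\xi-p/q|<1/(qQ)$, so it suffices to establish the linear form version.

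Next I would look at the $Q+1$ real numbers $\{q\xi\}$ for $q=0,1,\dots,Q$, where $\{x\}$ denotes the fractional part of $x$, so that each $\{q\xi\}\in[0,1)$. Partition the half-open interval $[0,1)$ into the $Q$ congruent subintervals $[j/Q,(j+1)/Q)$ with $j=0,1,\dots,Q-1$. Since $Q+1$ points are distributed among $Q$ boxes, the pigeonhole principle forces two of them, say $\{q_1\xi\}$ and $\{q_2\xi\}$ with $0\le q_1<q_2\le Q$, to lie in the same subinterval, and hence $|\{q_2\xi\}-\{q_1\xi\}|<1/Q$. I would then set $q=q_2-q_1$ and $p=\lfloor q_2\xi\rfloor-\lfloor q_1\xi\rfloor$, so that $1\le q\le Q$ and $q\xi-p=\{q_2\xi\}-\{q_1\xi\}$, giving $|q\xi-p|<1/Q$ as desired.

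There is essentially no obstacle here: the only points requiring a little care are the bookkeeping with fractional parts and the boundary conventions (keeping the $Q$ subintervals half-open so that they genuinely cover $[0,1)$, and observing that if $\{q\xi\}=0$ for some $q\ge1$ then $p/q=\xi$ makes the inequality trivial). I would also remark that an alternative, equally short proof comes from Minkowski's convex body theorem applied to the origin-symmetric convex region in $\R^2$ cut out by $|q\xi-p|\le 1/Q$ and $|q|\le Q$, which has area $4$ and therefore contains a nonzero integer point; this viewpoint is the one that generalises cleanly to the simultaneous and dual settings, as well as to the systems $\IImn$, discussed later in the chapter.
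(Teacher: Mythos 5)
Your pigeonhole argument is correct and is exactly the approach the paper has in mind: the text does not reproduce a proof but explicitly attributes it to the Pigeonhole Principle (citing \cite[\S1.1]{MR3618787}), and your partition of $[0,1)$ into $Q$ half-open boxes with the $Q+1$ points $\{q\xi\}$, $0\le q\le Q$, followed by division by $q$, is that standard argument carried out correctly. Your closing remark that the Minkowski convex body viewpoint generalises to systems of linear forms also matches how the paper later derives Theorem~\ref{thmsv3} from Theorem~\ref{MTLF}.
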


The proof can be found in many elementary number theory books and makes use of the wonderfully simple yet  powerful Pigeonhole Principle: if $n$ objects are placed in $m$ boxes and $ n > m $, then some box will contain at least two objects. See, for example, \cite[\S1.1]{MR3618787} for details. An easy consequence of the above theorem is the following statement.

 \begin{corollary}\label{cor1}
  Let $\xi \in \mathbb{R}\setminus\Q$, that is $\xi$ is a real irrational number. Then there exist infinitely many
  reduced rational fractions $p/q$ ~$(p,q\in\Z)$ such that
  \begin{equation}\label{eqn02}
    \left|\xi-\frac{p}{q}\right| < \frac{1}{q^2}.
  \end{equation}
\end{corollary}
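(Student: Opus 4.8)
The plan is to deduce the corollary from Theorem~\ref{Dir} by letting the parameter $Q$ grow without bound. First I would record the easy half: for any fixed $Q\in\N$, applying Dirichlet's theorem to $\xi$ yields integers $p,q$ with $1\le q\le Q$ and $|\xi-p/q|<1/(qQ)\le 1/q^2$, the last step using $q\le Q$. So \eqref{eqn02} always has at least one solution. Passing to lowest terms replaces $q$ by a divisor $q'\le q$, and since $1/q^2\le 1/(q')^2$ the reduced fraction still satisfies \eqref{eqn02}; hence each solution may be taken reduced.

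The real content is upgrading ``at least one'' to ``infinitely many''. Suppose, for contradiction, that only finitely many reduced fractions $p_1/q_1,\dots,p_N/q_N$ satisfy \eqref{eqn02}. Since $\xi$ is irrational, $\xi\neq p_i/q_i$ for each $i$, so $\delta:=\min_{1\le i\le N}\left|\xi-\frac{p_i}{q_i}\right|>0$. Now pick $Q\in\N$ with $Q>1/\delta$ and invoke Theorem~\ref{Dir} with this $Q$: we obtain a reduced fraction $p/q$ with $1\le q\le Q$ and $\left|\xi-\frac{p}{q}\right|<\frac{1}{qQ}\le\frac{1}{Q}<\delta$. This fraction satisfies \eqref{eqn02} by the first paragraph, yet it lies strictly closer to $\xi$ than any of $p_1/q_1,\dots,p_N/q_N$, so it is not on the list --- contradicting exhaustiveness. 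Therefore \eqref{eqn02} has infinitely many reduced solutions.

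The only delicate point is this final contradiction step, and it is precisely where the irrationality hypothesis is used: it guarantees $\delta>0$, so that a sufficiently large $Q$ forces a genuinely new approximation. (For rational $\xi$ the statement fails, since the approximations eventually stabilise.) The sole piece of bookkeeping is checking that reduction to lowest terms preserves \eqref{eqn02}, which was handled above. Beyond that I expect no obstacles: the argument is entirely elementary once Theorem~\ref{Dir} is available.
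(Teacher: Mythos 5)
Your proof is correct and is exactly the standard deduction the paper has in mind when it calls the corollary an easy consequence of Theorem~\ref{Dir}: apply Dirichlet's theorem with $Q\to\infty$ and use irrationality to force a fraction strictly closer to $\xi$ than any finite list allows, noting that reduction to lowest terms only strengthens \eqref{eqn02}. No issues.
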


The following exposition illustrates one of the many aspects of the role of Diophantine approximation in wireless communication. In particular, within this section we consider a basic example of a communication channel which brings into play the theory of Diophantine approximation. In \S\ref{sec2} we consider  a slightly more sophisticated example which also brings into play the theory of Diophantine approximation in higher dimensions.  This naturally feeds into \S\ref{sec3} in which the role of the theory of Diophantine approximation of dependent variables is discussed. The latter is also referred to as Diophantine approximation on manifolds since the parameters of interest are confined by some functional relations. To begin with, we consider a `baby' example of a communication channel intended to remove the language barrier for mathematicians and explicitly expose an aspect of communications that invites the use of Diophantine approximation.

\subsection{A `baby' example} \label{babyexample}

Suppose there are two {\em users} $S_1$ and $S_2$ wishing to send (\emph{transmit}) their {\em messages} $u_1$ and $u_2$ respectively along a shared (radio/wireless) communication channel to a \emph{receiver} $R$.
For obvious reasons, users are often  also referred to as transmitters.  Suppose for simplicity that $u_1,u_2 \in \{ 0,1\} $. Typically, prior to transmission, every message is encoded with what is called a {\em codeword}. Suppose that $x_1=x_1(u_1)$ and $x_2=x_2(u_2)$ are the codewords that correspond to $u_1$ and $u_2$. In general, $x_1$ and $x_2$ could be any functions on the set of messages. In principle, one can take $x_1=u_1$ and $x_2=u_2$. When the codewords $x_1$ and $x_2$ are being transmitted along a wireless communication channel, there is normally a certain degree of fading of the transmitted signals.
This for instance could be dependent on the distance of the transmitters
from the receiver and the reflection caused by obstacles such as  buildings in the path of the signal.
%It is worth stressing that this aspect of ``fading'' associated with a signal  should not be confused with the more familiar aspect of a signal being corrupted by ``noise''  that will be discussed  a little later.
Let $h_1$ and $h_2$  denote the fading factors (often referred to as \emph{channel gains} or \emph{channel coefficients} or {\em paths loss}) associated with the transmission of signals from $S_1$ and $S_2$ to $R$ respectively. These are strictly positive numbers and for simplicity we will assume that their sum is one: $h_1+h_2=1$. Mathematically, the meaning of the channel coefficients is as follows: if $S_i$ transmits signal $x_i$,  the receiver $R$ observes $h_ix_i$. However, due to fundamental physical properties of wireless medium, when $S_1$ and $S_2$ simultaneously use the same wireless communication channel, $R$ will receive the superposition of $h_1x_1$ and $h_2x_2$, that is
\begin{equation}\label{eqn03}
y = h_1 x_1  +  h_2 x_2\, .
\end{equation}
For instance, assuming that $x_1=u_1$ and $x_2=u_2$, the outcomes of $y$ are
\begin{equation}\label{eqn04}
y=\begin{cases}
0 &\text{ if  } \quad  u_1=u_2=0\, \\[0ex]
h_1 &\text{ if  }\quad  u_1=0 \text{ and } u_2=1\,,\\[0ex]
h_2 &\text{ if  } \quad u_1=1 \text{ and } u_2=0\,,\\[0ex]
1= h_1+h_2  &\text{ if  }  \quad u_1=u_2=1 \, .
\end{cases}
\end{equation}
%\begin{equation*}
%y=\begin{cases}
%0 &\text{ if  } \quad  x_1=x_2=0 \\[1ex]
%h_1 &\text{ if  }\quad  x_1=0 \text{ and } x_2=1\\[1ex]
%h_2 &\text{ if   } \quad x_1=1 \text{ and } x_2=0\\[1ex]
%1= h_1+h_2  &\text{ if  }  \quad x_1=x_2=1 \, .
%\end{cases}
%\end{equation*}
A pictorial description of the above setup is given below in Figure~\ref{fig2vbone}.

\setlength{\unitlength}{20mm}

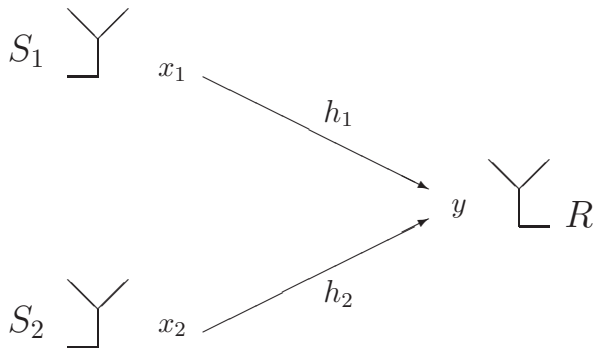
\begin{figure}[!ht]
\begin{center}
\begin{picture}(2, 2)(-0.75,-0.75)

\put(-1.5,0.8){\line(0,1){0.25}}
\put(-1.5,1.05){\line(-1,1){0.2}}
\put(-1.5,1.05){\line(1,1){0.2}}
\put(-1.5,0.8){\line(-1,0){0.2}}
\put(-1.1,0.8){$x_1$}

\put(1.3,-0.2){\line(0,1){0.25}}
\put(1.3,0.05){\line(-1,1){0.2}}
\put(1.3,0.05){\line(1,1){0.2}}
\put(1.3,-0.2){\line(1,0){0.2}}
\put(0.85,-0.1){$y$}

\put(-1.5,-1){\line(0,1){0.25}}
\put(-1.5,-0.75){\line(-1,1){0.2}}
\put(-1.5,-0.75){\line(1,1){0.2}}
\put(-1.5,-1){\line(-1,0){0.2}}
\put(-1.1,-0.9){$x_2$}

\put(-0.8,0.8){\vector(2,-1){1.5}}
\put(-0.8,-0.9){\vector(2,1){1.5}}

\put(-2.1,0.9){\Large{$S_1$}}
\put(-2.1,-0.9){\Large{$S_2$}}
\put(1.6,-0.2){\Large{$R$}}

\put(0,0.5){\large{$h_{1}$}}
\put(0,-0.7){\large{$h_{2}$}}

\end{picture}
\vspace*{4ex}
\end{center}
\caption{Two user Multiple Access Channel (no noise).}
\label{fig2vbone}
\end{figure}

The ultimate goal is for the receiver $R$ to identify ({\em decode}) the messages $u_1$ and $u_2$ from the observation of $y$. For example, with reference to \eqref{eqn04}, assuming the channel coefficients $h_1$ and $h_2$ are known at the receiver and are different, that is $h_1 \neq h_2$, the receiver is obviously able to do so. However, in real life there is always a degree of error in the transmission process, predominantly caused by the received signal $y$ being corrupted by (additive) \emph{noise}. The noise can result from a combinations of various factors including the interference of other users and natural electromagnetic radiation.
In short, if  $z$ denotes the noise, then instead of \eqref{eqn03}, $R$  receives the signal
\begin{equation}\label{eqn05}
y' = y+z=h_1 x_1  +  h_2 x_2  + z\,.
\end{equation}
Equation \eqref{eqn05} represents one the simplest models of what is known as an {\em Additive White Gaussian Noise Multiple Access Channel} (AWGN-MAC), see Chapter~\chaptwo{}
for a formal definition.  As before, the goal for the receiver $R$ remains to decode   the messages $u_1$ and $u_2$, but now from the observation of $y'=y+z$.
Let $\dminVB$ denote the {\em minimum distance} between the four outcomes of $y$. Then as long as the absolute value $|z|$ of the noise is strictly less than $\dminVB/2$, the receiver is  able to recover $y$ and consequently the messages $u_1$ and $u_2$ from the value of $y'$. This is simply due to the fact that the intervals of radius  $\dminVB/2$ centered at the four outcomes of $y$ are disjoint and $y'$ will lie in exactly one of these intervals, see Figure~\ref{f2}. In other words, $R$ is able to identify $y$ by rounding $y'$ to the closest possible outcome of $y$.

\

\setlength{\unitlength}{15mm}

\begin{figure}[h!]
\begin{center}
\begin{picture}(0,0)
\put(-3,0){\line(1,0){6}}
\put(-3,-0.1){\line(0,1){0.2}}
\put(3,-0.1){\line(0,1){0.2}}
\put(-1,-0.1){\line(0,1){0.2}}
\put(1,-0.1){\line(0,1){0.2}}

\put(-3.025,-0.3){0}
\put(2.975,-0.3){1}
\put(-1.1,0.2){$h_1$}
\put(0.9,0.2){$h_2$}

\put(-3,0.01){\line(1,0){0.5}}
\put(-3,-0.01){\line(1,0){0.5}}

\put(-1.5,0.01){\line(1,0){1}}
\put(-1.5,-0.01){\line(1,0){1}}

\put(0.5,-0.01){\line(1,0){1}}
\put(0.5,0.01){\line(1,0){1}}

\put(2.5,-0.01){\line(1,0){0.5}}
\put(2.5,0.01){\line(1,0){0.5}}

\put(-2.55,-0.05){\textbf{)}}

\put(-0.55,-0.06){\textbf{)}}
\put(-1.52,-0.06){\textbf{(}}

\put(1.45,-0.06){\textbf{)}}
\put(0.48,-0.06){\textbf{(}}

\put(2.48,-0.06){\textbf{(}}

\put(-3,0.2){\vector(1,0){0.5}}
\put(-2.5,0.2){\vector(-1,0){0.5}}

\put(-2.8,0.3){$z$}

\end{picture}
\end{center}
\caption{Separation of intervals of radius $|z|$ around each possible outcome of $y$ which contain the values of $y'$.}
\label{f2}
\end{figure}
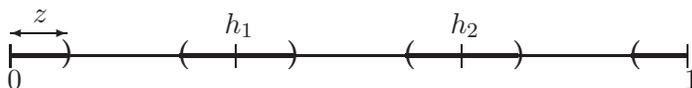

\noindent For example, it is easy to see that the maximum separation between the four outcomes given by \eqref{eqn04} is attained when $h_1 = 1/3$ and $h_2 = 2/3$.   In this case $\dminVB = 1/3$,  and we are able to recover the messages $u_1$ and $u_2$ assuming that  $|z| < 1/6$. The upshot of the above discussion is the following simple but fundamental conclusion.

\medskip

\noindent\textbf{Conclusion:} The greater the mutual separation $\dminVB$ of the outcomes of $y$,  the better the tolerance for noise we have during the transmission of the signal.

\medskip

In information theory achieving good separation between received signals translates into obtaining good lower bounds on the fundamental parameters of communication channels such as Rates-of-Communications, Channel Capacity and Degrees-of-Freedom, see Chapter~\chaptwo{} for formal definitions of these notions. Within this chapter we will concentrate on the role of Diophantine approximation in answering the following natural and important question:

  {\em How can a good separation of received signals be achieved and how often?}

 \noindent Indeed, to some extent, answering this and related questions using the tools of Diophantine approximation, algebraic number theory and the geometry of numbers is a reoccurring theme throughout the whole book. We will solely use {\em linear encoding} to achieve `good' separation. In particular, within the above `baby' example, one is able to achieve the optimal separation ($\dminVB=1/3$) at the receiver regardless of the values of $h_1$ and $h_2$ by applying the following simple linear encoding of the messages $u_1$ and $u_2$:
$$
x_1= \frac13 h_1^{-1} u_1 \qquad \text{and}\qquad   x_2 = \frac23 h_2^{-1} u_2 \,.
$$
Indeed,  before taking noise into consideration, under the above encoding the received signals become
\begin{equation}\label{eqn06}
y =   h_1 x_1 +  h_2 x_2  =   \textstyle{\frac13}  u_1 + \textstyle{ \frac23}  u_2
=\begin{cases}
0 &\text{ if  } \quad  u_1=u_2=0\,, \\[0ex]
1/3 &\text{ if  }\quad  u_1=0 \text{ and } u_2=1\,,\\[0ex]
2/3  &\text{ if  } \quad u_1=1 \text{ and } u_2=0\,,\\[0ex]
1  &\text{ if  }  \quad u_1=u_2=1 \, .
\end{cases}
\end{equation}

To summarise, the above discussion brings to the forefront  the importance of maximizing the minimal distance/separation $\dminVB$ of the received (noise-free) signals and at the same time indicates how a linear encoding allows us to achieve this.   Nevertheless,  the assumption that the messages $u_1$ and $u_2$ being sent by the transmitters $S_1$ and $S_2$ are binary in nature makes the discussion  over simplistic -- especially in terms of the use of number theory to analyse the outcomes.   We now modify the `baby' example to a more general situation  in which $S_1$ and $S_2$ wish to send messages $u_1$ and $u_2$ from the set of integers  $\{0,\dots,Q\}$ to a single receiver $R$.

\subsection{Example 1 (modified `baby' example)}\label{example1}
Unless stated otherwise,  here and throughout,  $Q\ge2$  is a fixed integer.  As we shall see, this slightly more complex setup, in which $u_1,u_2\in\{0,\dots,Q\}$, naturally  bring into play the rich theory of Diophantine approximation. So with this in mind, let us assume that the codewords $x_1$ and $x_2$ that are being transmitted by $S_1$ and $S_2$ are simply obtained by the linear encoding of the messages $u_1$ and $u_2$ as follows
\begin{equation}\label{van}
x_1= \alpha u_1    \quad {\rm and } \quad x_2= \beta u_2\,  \quad  \quad (0 \le u_1,u_2 \le Q) \,,
\end{equation}
where $\alpha$ and $\beta$ are some positive real numbers.  We emphasise that the  parameters $\alpha$ and $\beta$ are at our disposal and this fact will be utilized later.  As in the `baby' example let $h_1$ and $h_2$  denote the channel coefficients  associated with $S_1$ and $S_2$ respectively.
Then,  before taking noise into account, $R$  will receive the signal
\begin{equation}\label{eqn07}
y = h_1 x_1  +  h_2 x_2=h_1\alpha u_1+h_2\beta u_2\, .
\end{equation}
Clearly, $y$ takes the values
\begin{equation}\label{eqn08}
h_1\alpha u_1+h_2\beta u_2  : 0 \le u_1,u_2 \le Q \, .
\end{equation}
Thus, there are potentially $(Q+1)^2$ distinct outcomes of $y$ and they lie in the interval $[0, (h_1\alpha+h_2\beta)Q]$.  It is easily verified  that if they were equally separated then their mutual separation  would be precisely  \begin{equation}\label{eqn09}
\frac{h_1\alpha+h_2\beta}{Q+2}  \, .
  \end{equation}
However, this is essentially never the case. Indeed, let $\dminVB$ denote the minimal distance between the points $y$ given by \eqref{eqn08}. Without loss of generality, suppose for the sake of simplicity that
$$
0<h_1\alpha<h_2\beta
$$
and define the real number
\begin{equation}\label{eqn10}
\xi:=\frac{h_1\alpha}{h_2\beta}\,,
\end{equation}
which in view of the above assumption is between $0$ and $1$; {\em i.e.} $0<\xi<1$.
Then, by Dirichlet's theorem, we have that
\begin{equation}\label{eqn11}
\left|\xi-\frac pq\right|  \ \le  \ \frac{1}{qQ}
\end{equation}
for an integer pair $(p,q) \in \Z^2$ satisfying $1\le q\le Q$. Since $0<\xi<1$ and $1\le q\le Q$, we also have that $0\le p\le q\le Q$. On  multiplying \eqref{eqn11} by $h_2\beta q$, we find that
\begin{equation}\label{eqn12}
|h_1\alpha q-h_2\beta p|  \ \le \  \frac{C_1}Q\, \qquad (C_1:=h_2\beta\,),
\end{equation}
for some  integer pair $(p,q) \in \Z^2$  satisfying $1\le q\le Q$ and $0\le p\le q$. Now observe that the quantity  $|h_1\alpha q-h_2\beta p|$ on the left hand side   of \eqref{eqn12} is exactly the distance between the two specific values of $y$ within \eqref{eqn08} corresponding to $u_1=q, u_2=0$ and $u_1=0, u_2=p$.  Since $q\neq0$, this demonstrates that the minimal distance $\dminVB$ between the  values of $y$ given by \eqref{eqn08} is  always bounded above by  $C_1/Q$; i.e.
\begin{equation}\label{eqn13}
  \dminVB\le \frac{C_1}{Q}\, .
\end{equation}
For all intents and purposes, this bound on the minimal distance  is smaller  than the hypothetical `perfect' separation given by \eqref{eqn09}. In general, we have that
$$
\dminVB\le \min\left\{\frac{C_1}{Q}\,,\frac{h_1\alpha+h_2\beta}{Q+2}\right\}.
$$
It is easily seen that we can remove the assumption that $0<h_1\alpha<h_2\beta$ if we put  $ C_1 = \max\{h_1\alpha,h_2\beta\}$.

\begin{remark} \label{vanishing}
On looking at \eqref{eqn13}, the reader may be concerned (rightly)
that the minimal distance $\dminVB$ vanishes as $Q$ grows. Luckily,
this can be easily rectified by introducing a scaling factor
$\lambda \geq 1$ into the linear encoding of the messages $u_1$ and
$u_2$. The point of doing this is that the codeword $x_1$ (resp.
$x_2$) given by \eqref{van} becomes $\lambda \alpha u_1 $ (resp.
$\lambda \beta u_2$) and this has no effect on the point of
interest $\xi$ given by \eqref{eqn10} but it scales up by $\lambda$
the constant $C_1$ appearing in \eqref{eqn12}. Thus, by choosing
$\lambda$ appropriately (namely, proportional to $Q$) we
can avoid the right hand side of \eqref{eqn13} from vanishing as
$Q$ grows. In subsequent more `sophisticated' examples, the scaling
factor will be relevant to the discussion and will appear at the
point of linear encoding the messages.
\end{remark}

Now let us bring noise into the above setup. As in the `baby' example, if $z$ denotes the (additive) noise, then instead of \eqref{eqn07}, $R$  receives the signal
\begin{equation}\label{eqn14}
y' = y+z=h_1\alpha u_1+h_2\beta u_2+z\,.
\end{equation}
Note that as long as the absolute value $|z|$ of the noise is strictly less than $\dminVB/2$, the receiver $R$  is  able to recover $y$ and consequently $u_1$ and $u_2$ from the value of $y'$.
Commonly, the nature of noise is such that $z$ is a random variable having normal distribution. Without loss of generality we will assume that  $z\sim \cN(0,1)$, that is the mean value of noise is $0$ and its variance is $1$.  Therefore, when taking the randomness of noise into account, the problem of whether or not the receiver is able to recover messages sent by the transmitters becomes probabilistic in nature. Loosely speaking,  we are interested in the probability that $|z| < \dminVB/2$ -- the larger the probability the more likely the receiver is able to recover messages by rounding $y'$ to the closest possible outcome of $y$. Of course, if it happens that $|z| \ge \dminVB/2$, then we will have an error in the recovery of $y$ and thus the messages $u_1$ and $u_2$. When $z\sim \cN(0,1)$, the probability of this error can be computed using the Gauss error function and is explicitly equal to
$$
1-\sqrt{2/\pi}\int^{\dminVB/2}_0e^{-\theta^2/2}d\theta\,.
$$
This gets smaller as $\dminVB$ gets larger. Clearly, in view of the theoretic upper bound on $\dminVB$ given by \eqref{eqn13} the probability of error is bounded above by the probability that $|z|<C_1/2Q$. Thus, the closer $\dminVB$ is to the theoretic upper bound, the closer we are to minimizing the probability of the error and in turn the higher the threshold for tolerating noise.  With this in mind, we now demonstrate that on appropriately choosing the parameters   $\alpha$ and $\beta$  associated with the encoding procedure  it is possible to get within a constant factor of the theoretic upper bound.

\subsection{Badly approximable numbers}\label{Bad1}
The key is to make use of the existence of badly approximable numbers - a fundamental class of real numbers in the theory of Diophantine approximation.

\begin{definition}[Badly approximable numbers]
A real number $\xi$ is said to be {\em badly approximable} if there exists a constant $\kappa=\kappa(\xi)>0$ such that for all $q\in\N$, $p\in\Z$
  \begin{equation}\label{eqn15}
    \left|\xi-\frac{p}{q}\right| \ge \frac{\kappa}{q^2}\,.
  \end{equation}
\end{definition}

\noindent Note that by definition,  badly approximable numbers are precisely those  real numbers for which the right hand side of inequality  \eqref{eqn02} associated with  Dirichlet's corollary (Corollary~\ref{cor1}) cannot be `improved' by an arbitrary constant factor.   By Hurwitz's theorem \cite{MR3618787}, if $\xi$ is badly approximable then for the associated badly approximable constant $\kappa(\xi) $ we have that
$$
0 < \kappa(\xi) <  1/\sqrt5  \, .
$$
It is well known that the set of badly approximable numbers can be characterized as those  real numbers whose continued fraction expansions have bounded partial quotients.  Moreover, an irrational number has a periodic continued fraction expansion if and only if it is a quadratic irrational and thus every quadratic irrational is badly approximable.    In particular,
it is easily verified that for any given $ \ve >0$,
the golden ratio $$\gamma:=(\sqrt5+1)/2 $$ satisfies inequality \eqref{eqn15} with $ \kappa =1/(\sqrt5+\ve)$ for all $p \in \Z$ and $q \in \N$ with $q^2\ge 1/(\sqrt5\ve)$. This is obtained using the standard argument that involves substituting $p/q$ into the minimal polynomial $f$ of $\gamma$ over $\Z$ and using the obvious fact that $1\le q^2|f(p/q)|\le  q^2 |\gamma-p/q|\cdot|\bar{\gamma}-p/q|$, where $\bar{\gamma} = (\sqrt5 - 1)/2$ is the conjugate of $\gamma$. We leave further computational details to the reader. Observe that on taking $\ve=1/\sqrt5$,  we find that $\gamma$ is badly approximable with $\kappa(\gamma)\ge \sqrt5/6$.

The reason for us bringing into play the notion of badly approximable numbers is very easy to explain.  By definition, on choosing the parameters $\alpha$ and $\beta$ so that $\xi:=h_1\alpha/h_2\beta$ is badly approximable guarantees the existence of a constant $\kappa(\xi) > 0$ such that
$$
|h_1\alpha q-h_2\beta p|  \ \geq \ \kappa(\xi) \frac{C_1}{q} \,   \qquad    \forall \ q \in \N, \; p \in \Z \, .
$$
Thus, it follows that the separation between the points given by \eqref{eqn08} is  at least $\kappa(\xi) C_1/Q$. In other words, the minimal distance $\dminVB$  is within a constant factor of the theoretic upper bound $C_1/Q$ given by \eqref{eqn13}. Indeed, if we choose $\alpha$ and $\beta$ so that $h_1\alpha/h_2\beta$  is the golden ratio $\gamma$ we obtain that
\begin{equation}\label{eqn16}
\kappa(\gamma)\frac{C_1}{Q}\le \dminVB\le \frac{C_1}{Q}\,.
\end{equation}
The upshot is that equation \eqref{eqn16} gives an explicit `safe' threshold for the level of noise that can be tolerated. Namely, the probability that  $|z| < \dminVB/2$ is at least the probability that  $|z|<\kappa(\gamma) C_1/Q$.
In principle, one can manipulate the values of $Q\in \N$ and $\ve > 0$ within the above argument to improve the lower bound in \eqref{eqn16}. However,  any such manipulation will not enable us to surpass the hard lower bound limit of $C_1/(\sqrt5Q)$ imposed by the aforementioned consequence  of Hurwitz's theorem.  Therefore, we now explore a different  approach in an attempt to make improvements to \eqref{eqn16} beyond this hard limit.  Ideally, we would like to replace $ 1/\sqrt5$ by a constant arbitrarily close to one. We would also like to  move away from insisting that $\xi$ is badly approximable since this is a rare event. Indeed,  although the set of badly approximable number is of full Hausdorff dimension (a result of Jarn\'ik from the 1920s), it is a set of Lebesgue measure zero (a result of Borel from 1908). In other words, the (uniform) probability that a real number in the unit interval  is  badly approximable is zero. We will return to this in  \S\ref{BAD20}  and \S\ref{FR} below.

\subsection{Probabilistic aspects}\label{Prob1}
The approach we now pursue is motivated by the following probabilistic problem:
 {\em Given $0<\kappa'<1$ and $Q\in\N$, what
is the probability that a given real number  $\xi\in \II:=(0,1)$ satisfies
\begin{equation}\label{eqn17}
\left|\xi-\frac pq\right|\ge\frac{\kappa'}{qQ}
\vspace*{2ex}
\end{equation}
for all integers $p$ and $1\le q\le Q$?}
%\vspace{2ex}
\noindent Note that these are the real numbers for which the right hand side of inequality  \eqref{Dir} associated with  Dirichlet's theorem  cannot be improved by the factor of $\kappa'$ ($Q$ is fixed here). It is worth mentioning at this point, in order to avoid confusion later,  that these real numbers are not the same as Dirichlet non-improvable numbers which will be introduced below in \S\ref{Improve1}.
To estimate the probability in question, we consider the complementary inequality
\begin{equation}\label{eqn18}
\left|\xi-\frac pq\right|<\frac{\kappa'}{qQ}\,.
\end{equation}
Let $1\le q\le Q$. Then for a fixed $q$,   the probability that a given $\xi\in \II:=(0,1)$ satisfies \eqref{eqn18}
for some $p\in\Z$ is exactly $2\kappa'/Q$  -- it corresponds to the measure of the set
$$
E_q := \bigcup_{p \in \Z} \textstyle{\Big(\frac{p}{q} -  \frac{\kappa'}{qQ} , \frac{p}{q} + \frac{\kappa'}{qQ}\Big)}  \, \cap \,  \II \, .
$$
On summing up these probabilities over $q$,  we conclude that the probability that a given $\xi\in \II$ satisfies \eqref{eqn18} for some integers $p$ and $1\le q\le Q$  is trivially  bounded above by $ 2 \kappa'$. This in turn implies that
for any $\kappa'<1/2$ and any $Q\in\N$ the probability that \eqref{eqn17} holds for all integers $p,q$ with $1\le q\le Q$ is at least
$$
1-2\kappa'\,.
$$
The following result shows that with a little more extra work it is  possible to improve this trivial bound.

\begin{lemma} \label{mum}
For any $0<\kappa'<1$ and any $Q\in\N$ the probability that \eqref{eqn17} holds for all integers $p,q$ with $1\le q\le Q$ is at least
\begin{equation}\label{eqn19}
1-\frac{12\kappa'}{\pi^2}\approx1-1.216\kappa'\,.
\end{equation}
\end{lemma}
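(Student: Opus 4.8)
The plan is to bound the complementary probability. Let $\cB\subseteq\II=(0,1)$ be the ``bad'' set consisting of those $\xi$ for which the reverse inequality $\left|\xi-\frac pq\right|<\frac{\kappa'}{qQ}$ holds for at least one integer pair $(p,q)$ with $1\le q\le Q$. Since this is precisely the complement in $\II$ of the event in the statement, and $\cB$ is open, the probability in question equals $1-|\cB|$, where $|\cdot|$ denotes Lebesgue measure; so it suffices to prove $|\cB|\le 12\kappa'/\pi^2$. Geometrically $\cB$ is a union of intervals centred at the rationals $p/q$ with radius $\kappa'/(qQ)$, and only the numerators $0\le p\le q$ need be kept, since for $p<0$ or $p>q$ the corresponding interval does not meet $(0,1)$ (here one uses $\kappa'<1\le Q$).

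The gain over the trivial bound $1-2\kappa'$ comes from discarding all non-reduced fractions. If $p/q$ is not in lowest terms and $a/b=p/q$ with $b<q$, then the two intervals share the centre $a/b$ while the first has the smaller radius $\kappa'/(qQ)<\kappa'/(bQ)$, hence is contained in the second. Therefore
\[
\cB=\bigcup_{b=1}^{Q}\ \bigcup_{\substack{0\le a\le b\\ \gcd(a,b)=1}}\left(\frac ab-\frac{\kappa'}{bQ},\ \frac ab+\frac{\kappa'}{bQ}\right)\cap\II .
\]
For $b\ge2$ there are exactly $\varphi(b)$ admissible numerators, each giving an interval of length $2\kappa'/(bQ)$ lying wholly inside $\II$; for $b=1$ the endpoints $a=0,1$ give two half-intervals of combined length $2\kappa'/Q=\varphi(1)\cdot 2\kappa'/Q$. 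Summing the lengths (a crude union bound) yields
\[
|\cB|\ \le\ \sum_{b=1}^{Q}\varphi(b)\cdot\frac{2\kappa'}{bQ}\ =\ \frac{2\kappa'}{Q}\sum_{b=1}^{Q}\frac{\varphi(b)}{b}.
\]

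It then remains to establish the number-theoretic estimate $\sum_{b=1}^{Q}\varphi(b)/b\le 6Q/\pi^2$ (or its asymptotic form); combined with the previous display this gives $|\cB|\le 12\kappa'/\pi^2$ and hence the lemma. For this one writes $\varphi(b)/b=\sum_{d\mid b}\mu(d)/d$, interchanges the order of summation to obtain $\sum_{b\le Q}\varphi(b)/b=\sum_{d\le Q}(\mu(d)/d)\lfloor Q/d\rfloor$, and compares with $Q\sum_{d\ge1}\mu(d)/d^{2}=Q/\zeta(2)=6Q/\pi^{2}$. Controlling the discrepancy between $\lfloor Q/d\rfloor$ and $Q/d$ against the awkwardly-placed sign of $\mu(d)$, together with the tail $\sum_{d>Q}\mu(d)/d^{2}$, is the one genuinely delicate step; the rest — the reduction to reduced fractions, the two boundary half-intervals at $b=1$, and the restriction $0\le p\le q$ — is routine bookkeeping. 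I expect this Mertens-type estimate, rather than the measure-theoretic set-up, to be the main obstacle.
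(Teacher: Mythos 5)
Your reduction to the arithmetic sum is exactly the paper's: the paper likewise discards non-reduced fractions (an interval centred at a non-reduced $p/q$ has the same centre as, and smaller radius than, the interval of the corresponding reduced fraction, so it is absorbed) and applies the same union bound to reach your estimate $|\cB|\le\frac{2\kappa'}{Q}\sum_{q\le Q}\varphi(q)/q$, which is the paper's inequality \eqref{eqn20} for the set it calls $E$. So the measure-theoretic half of your argument is complete and coincides with the paper's line for line.

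The genuine gap is the step you defer: you never establish $\sum_{q\le Q}\varphi(q)/q\le 6Q/\pi^{2}$, and without it the bound \eqref{eqn19} is not proved; this estimate is the entire remaining content of the paper's proof. There the authors use $\varphi(q)/q=\sum_{d\mid q}\mu(d)/d$, interchange summation to get $\sum_{d\le Q}\frac{\mu(d)}{d}\lfloor Q/d\rfloor$, and then bound this by $Q\sum_{d\le Q}\frac{\mu(d)}{d^{2}}\le\frac{6Q}{\pi^{2}}$. Your suspicion that the signs of $\mu(d)$ make this delicate is in fact justified: neither of those last two inequalities is termwise valid, and the sharp bound $\sum_{q\le Q}\varphi(q)/q\le 6Q/\pi^{2}$ even fails for small $Q$ (already at $Q=1$ the left side is $1>6/\pi^{2}$, and the true probability there is $1-2\kappa'$, below the claimed \eqref{eqn19}). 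What one can prove cleanly is $\sum_{q\le Q}\varphi(q)/q\le\frac{6}{\pi^{2}}Q+O(\log Q)$, for instance by writing $\lfloor Q/d\rfloor=Q/d-\{Q/d\}$ and using $\bigl|\sum_{d\le Q}\frac{\mu(d)}{d}\{Q/d\}\bigr|\le\sum_{d\le Q}\frac{1}{d}$ together with $\bigl|Q\sum_{d>Q}\frac{\mu(d)}{d^{2}}\bigr|\le 1$; this yields \eqref{eqn19} up to an extra term of size $O(\kappa'(\log Q)/Q)$. So to turn your proposal into a proof you must actually carry out this Mertens-type computation (as the paper does, modulo the sign issue it passes over silently); as written, the quantitative heart of the lemma is missing.
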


\begin{remark}  Observe that  when
$$
\kappa'<\pi^2/12\approx0.822  \, ,
$$
the quantity $ 12 \kappa'/\pi^2$ is strictly less than $1$ and therefore the probability given by \eqref{eqn19} is greater than zero. Hence for any $Q \in \N$,  there exist real numbers $\xi$  satisfying \eqref{eqn17} for all integers $p$ and $1\le q\le Q$.
\end{remark}

\begin{remark}\label{vb3978}
Within Lemma~\ref{mum} the word `probability' refers to the uniform probability over $[0,1]$. However, in real world applications the parameter $\xi$ appearing in \eqref{eqn17} may not necessarily be a uniformly distributed random variable. For instance, the channel coefficients could be subject to Rayleigh distribution and this will have an obvious effect on the distribution of $\xi$ via \eqref{eqn10}. Nevertheless, as long as the distribution of $\xi$ is absolutely continuous, a version of Lemma~\ref{mum} can be established, albeit the constant that accompanies $\kappa'$ will be different. For further details we refer the reader to \cite{MR3545930}.
\end{remark}

\begin{proof}
The proof of Lemma \ref{mum} relies on `removing' the overlaps between the different sets $E_q$ as $q$ varies. Indeed, it is easily seen that
$$
E:=\bigcup_{q=1}^QE_q=\bigcup_{q=1}^Q\bigcup_{\substack{0\le p\le q\\[0.5ex] \gcd(p,q)=1}}\textstyle{\Big(\frac{p}{q} -  \frac{\kappa'}{qQ} , \frac{p}{q} + \frac{\kappa'}{qQ}\Big)}  \, \cap \,  \II \,.
$$
Therefore,
\begin{equation}\label{eqn20}
\textbf{Prob}(E)\le\sum_{q=1}^Q\sum_{\substack{1\le p\le q\\[0.5ex] \gcd(p,q)=1}}\frac{2\kappa'}{qQ}=
\sum_{q=1}^Q\frac{2\kappa'\varphi(q)}{qQ}=\frac{2\kappa'}{Q}\sum_{q=1}^Q\frac{\varphi(q)}{q}\,,
\end{equation}
where $\varphi$ is the Euler function. To estimate the above sum, it is convenient to use the M\"obius inversion formula, which gives that
$$
\frac{\varphi(q)}{q}=\sum_{d|q}\frac{\mu(d)}{d}
$$
where $\mu$ is the \emph{M\"obius function}. Recall that
$$
\sum_{d=1}^\infty\frac{\mu(d)}{d^2}=\frac{1}{\zeta(2)}=\frac{6}{\pi^2}\,.
$$
Then
\begin{align*}
\sum_{q=1}^Q\frac{\varphi(q)}{q}&=\sum_{q=1}^Q\sum_{d \mid q} \frac{\mu(d)}{d}
=\sum_{q=1}^Q\sum_{dd'=q} \frac{\mu(d)}{d}\\[1ex]
&=\sum_{dd'\le Q} \frac{\mu(d)}{d}
=\sum_{1\le d\le Q}\frac{\mu(d)}{d}\sum_{d'\le Q/d} 1\\[1ex]
&=\sum_{1\le d\le Q}\frac{\mu(d)}{d}[Q/d] \le Q\sum_{1\le d\le Q}\frac{\mu(d)}{d^2}\\[1ex]
&\le\frac{6Q}{\pi^2}\,.
\end{align*}
Combining this with \eqref{eqn20} gives the required estimate, that is a lower bound on $1-\textbf{Prob}(E)$, the probability of the complement to $E$.
\end{proof}

Let  $0<\kappa'<\pi^2/12$ and  $Q\in\N$ be given.
 The upshot of the above discussion is that there exist parameters $\alpha$ and $\beta$ so that with probability  greater than $1-12\kappa'/\pi^2>0$, the real number  $\xi:=h_1\alpha/h_2\beta$  satisfies \eqref{eqn17} for all integers $p$ and $1\le q\le Q$. It follows  that for such $\xi$ (or equivalently  parameters $\alpha$ and $\beta$) the separation between the associated  points given by \eqref{eqn08} is  at least $\kappa' C_1/Q$ and so the minimal distance $\dminVB$ satisfies
\begin{equation}\label{eqn21}
\kappa'\frac{C_1}{Q}\le \dminVB\le \frac{C_1}{Q}\,.
\end{equation}
In particular, we can choose $\kappa'$ so that $\kappa(\gamma)<\kappa'$ in which case the lower bound in \eqref{eqn21} is better than that in  \eqref{eqn16} obtained by making use of badly approximable numbers.   That is to say,  that the lower bound involving $\kappa'$ is closer to the theoretic upper bound $C_1/Q$.  Moreover, the set of badly approximable numbers is a set of measure zero whereas the set of real numbers  satisfying \eqref{eqn17} for all  integers $p$ and $1\le q\le Q$ has Lebesgue measure at least $1-12\kappa'/\pi^2$.  This is an important advantage of the probabilistic approach since in reality it is often the case that the channel coefficients $h_1$ and $h_2$ are random in nature. For example,  when dealing with mobile networks one has to take into consideration the obvious fact that the transmitters are not fixed. The upshot is that in such a scenario, we do  not have the luxury of specifying a particular choice of the parameters $\alpha$ and $\beta$ that leads to the corresponding points given by \eqref{eqn08} being well separated as in the sense of \eqref{eqn16}.  The probabilistic approach provides a way out.  In short, it  enables us to ensure that the minimal distance $\dminVB$  between the points given by  \eqref{eqn08} satisfies \eqref{eqn21} with good (explicitly computable) probability.  See \cite[Section VI.B]{MR3215324} for a concrete example where the above probabilistic approach is used for the analysis of the capacity of symmetric Gaussian multi-user interference channels.

Up to this point, $Q$ has been a fixed integer greater than or equal to 2 and reflects the size of the set of messages. We end our discussion revolving around Example 1 by  considering  the scenario  in which we have complete freedom in choosing $Q$.
In particular, one is often interested in the effect of allowing $Q $ to tend to infinity on the model under consideration.
This is relevant to understanding the so-called Degrees of Freedom (DoF) of communication channels, see \S\ref{sv2.4}.

\subsection{Dirichlet improvable and non-improvable numbers}\label{Improve1}
We now show that there are special values of $Q$ for which  the minimal distance $\dminVB$ satisfies \eqref{eqn21} with  $\kappa'$ as close to one as desired.   The key is to exploit the (abundant) existence of numbers for which Dirichlet's theorem cannot be improved.  Note that in the argument leading to \eqref{eqn16} we made use of the existence of badly approximable numbers; that is numbers for with Dirichlet's corollary cannot be improved.

\begin{definition}[Dirichlet improvable and non-improvable numbers]
Let $0<\kappa'<1$. A real number $\xi$ is said to be {\em $\kappa'$-Dirichlet improvable} if for all sufficiently large $Q\in\N$ there are integers $p$ and $1\le q\le Q$ such that
$$
    \left|\xi-\frac{p}{q}\right| < \frac{\kappa'}{qQ}\,.
$$
A real number $\xi$ is said to be {\em Dirichlet non-improvable} if for any $\kappa'<1$ it is not $\kappa'$-Dirichlet improvable. In other words, a real number $\xi$ is {\em Dirichlet non-improvable} if for any $0<\kappa'<1$ there exists arbitrarily large $Q\in\N$ such that for all integers $p$ and $1\le q\le Q$
$$
    \left|\xi-\frac{p}{q}\right| \ge \frac{\kappa'}{qQ}\,.
$$
\end{definition}

\vspace*{2ex}

\noindent  A well know  result of Davenport $\&$ Schmidt \cite{MR0272722}  states that:
$$
\text{\em a real number is Dirichlet non-improvable}
$$
$$
\Updownarrow
$$
$$
\text{\em it is not badly approximable.}
$$
Consequently, a randomly picked real number is Dirichlet non-improvable with probability one.  The upshot of this is the following remarkable consequence: for any random choice of channel coefficients $h_1$, $h_2$ and  parameters $\alpha$, $\beta$,  \textbf{with probability one} for any $\ve>0$ there exist arbitrarily large integers $Q$ such that the minimal  distance $\dminVB$  between the associated points  given by \eqref{eqn08} satisfies
$$
(1-\ve)\frac{C_1}{Q}\le \dminVB\le \frac{C_1}{Q}\,.
$$
Clearly, this is the best possible outcome for the basic wireless communication model considered in Example 1.  We now consider a slightly more sophisticated model which  demonstrates the role of higher dimensional Diophantine approximation in wireless communication.

\section{A `toddler' example and Diophantine approximation in higher dimensions }\label{sec2}

The discussion in this section is centred on analysing the model arising from adding another receiver within the setup of the modified `baby' example.

\subsection{Example 2} \label{eg2section}

Suppose there are two users $S_1$ and $S_2$ as in Example~1 but this time there are also two receivers $R_1$ and $R_2$.  Let $Q  \ge 1$ be an integer and suppose $S_1$  wishes to simultaneously transmit independent messages $u_1,v_1\in\{0,\dots,Q\}$, where $u_1$ is intended for  $R_1$  and $v_1$  for $R_2$. Similarly, suppose $S_2$  wishes to simultaneously transmit independent messages $u_2,v_2\in\{0,\dots,Q\}$, where $u_2$ is intended for  $R_1$  and $v_2$  for $R_2$. After (linear) encoding, $S_1$ transmits $x_1:=x_1(u_1, v_1) $ and $S_2$ transmits $x_2:=x_2(u_2, v_2)$; that is to say
\begin{equation}\label{eqn22}
x_1= \alpha_1u_1 + \beta_1v_1     \quad {\rm and } \quad x_2= \alpha_2u_2+ \beta_2v_2
\end{equation}
 where $\alpha_1,\alpha_2,\beta_1$ and $\beta_2$ are some positive real numbers.  Next, for $i,j=1,2$,  let $h_{ij}$ denote the channel coefficients associated with the transmission of  signals from $S_j$ to $R_i$. Also, let $y_i$ denote the signal received by  $R_i $  before noise is taken into account.  Thus,
\begin{eqnarray}\label{eqn23}
y_1  & = & h_{11} x_1   +  h_{12} x_2\,,  \\ [1ex]
y_2  & = & h_{21} x_1   +  h_{22} x_2\,.  \label{eqn24} \,
\end{eqnarray}
A pictorial description of the above setup is given in Figure~\ref{fig2vb} below.

\vspace*{5ex}

\setlength{\unitlength}{20mm}

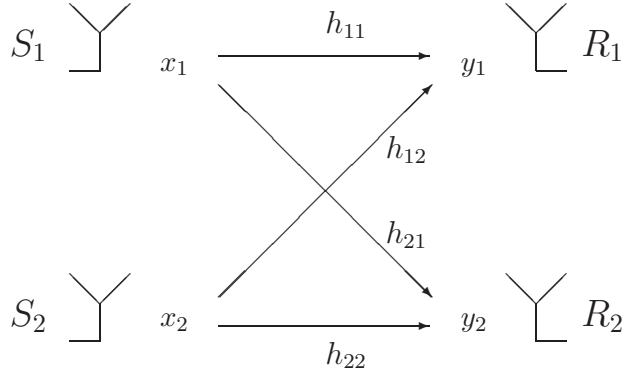
\begin{figure}[!ht]
\begin{center}
\begin{picture}(2, 2)(-0.75,-0.75)

\put(-1.5,0.8){\line(0,1){0.25}}
\put(-1.5,1.05){\line(-1,1){0.2}}
\put(-1.5,1.05){\line(1,1){0.2}}
\put(-1.5,0.8){\line(-1,0){0.2}}
\put(-1.1,0.8){$x_1$}

\put(1.4,0.8){\line(0,1){0.25}}
\put(1.4,1.05){\line(-1,1){0.2}}
\put(1.4,1.05){\line(1,1){0.2}}
\put(1.4,0.8){\line(1,0){0.2}}
\put(0.9,0.8){$y_1$}

\put(-1.5,-1){\line(0,1){0.25}}
\put(-1.5,-0.75){\line(-1,1){0.2}}
\put(-1.5,-0.75){\line(1,1){0.2}}
\put(-1.5,-1){\line(-1,0){0.2}}
\put(-1.1,-0.9){$x_2$}

\put(1.4,-1){\line(0,1){0.25}}
\put(1.4,-0.75){\line(-1,1){0.2}}
\put(1.4,-0.75){\line(1,1){0.2}}
\put(1.4,-1){\line(1,0){0.2}}
\put(0.9,-0.9){$y_2$}

\put(0,0){\vector(1,1){0.71}}
\put(0,0){\line(-1,-1){0.71}}
\put(0,0){\line(-1,1){0.71}}
\put(0,0){\vector(1,-1){0.71}}

\put(-0.71,0.9){\vector(1,-0){1.4}}
\put(-0.71,-0.9){\vector(1,-0){1.4}}

\put(-2.1,0.9){\Large{$S_1$}}
\put(-2.1,-0.9){\Large{$S_2$}}
\put(1.7,0.9){\Large{$R_1$}}
\put(1.7,-0.9){\Large{$R_2$}}

\put(0,1.05){\large{$h_{11}$}}
\put(0,-1.15){\large{$h_{22}$}}
\put(0.4,0.22){\large{$h_{12}$}}
\put(0.4,-0.35){\large{$h_{21}$}}

\end{picture}
\vspace*{6ex}
\end{center}
\caption{Two-user $X$-channel.}
\label{fig2vb}
\end{figure}

\vspace*{5ex}

\noindent Substituting \eqref{eqn22} into \eqref{eqn23} and \eqref{eqn24} gives that
\begin{eqnarray}\label{eqn25}
y_1  & = & h_{11}\alpha_1 u_1 +h_{11}\beta_1 v_1   +  h_{12} \alpha_2u_2+ h_{12}\beta_2 v_2\,,  \\ [1ex]
y_2  & = & h_{21} \alpha_1u_1 +h_{21}\beta_1 v_1   +  h_{22} \alpha_2u_2+h_{22}\beta_2v_2\,.  \label{eqn26} \,
\end{eqnarray}
Note that there are potentially $(Q+1)^4$ distinct outcomes of $y_i$ and they lie in the interval $[0, (h_{i1}\alpha_1+h_{i1}\beta_1+ h_{i2}\alpha_2+h_{i2}\beta_2)Q]$.

Now let us bring noise into the setup.
If $z_i$ denotes the (additive) noise at receiver $R_i$ ($i=1,2$), then instead of \eqref{eqn25} and \eqref{eqn26}, $R_1$ and $R_2$  receive the signals
\begin{equation}\label{eqn27}
y'_1   =  y_1+z_1\qquad\text{and}\qquad y'_2   =  y_2+z_2
\end{equation}
respectively. Equations \eqref{eqn22}--\eqref{eqn27} represent one of the simplest models of what is known as a {\em two-user $X$-channel}. The ultimate goal is for the receiver $R_1$ to decode the messages $u_1$ and $u_2$ from the observation of $y'_1$ and for the receiver $R_2$ to decode the messages $v_1$ and $v_2$ from the observation of $y'_2$. Clearly, this goal is attainable if $2|z_1|$ and $2|z_2|$ are smaller than the minimal distance between the outcomes of $y_1$ given by \eqref{eqn25} and the minimal distance between the outcomes of $y_2$ given by \eqref{eqn26} respectively.

Assume for the moment that $u_1, u_2, v_1, v_2 \in \{ 0,1\} $ and for the ease of discussion, let us just concentrate on the signal $y'_1$ received  at  $R_1$. Then there are generally up to 16 different outcomes for $y_1$.
Now there is one aspect of the above setup that we have not yet exploited: the receiver $R_1$  is not interested in the signals $ v_1 $  and  $v_2 $. So if these `unwanted' signals could be deliberately aligned (at the transmitters) via encoding into a single component  $v_1 + v_2 $,  then there would be fewer possible outcomes for $y_1$. This is merely down to the simple fact that there are 4 different pairs $(v_1,v_2)$ as opposed to 3 different sums $v_1 + v_2$ when $v_1$ and $v_2 $ take on  binary values. With this in mind, suppose that
\begin{equation}\label{eqn28}
 x_1= \lambda(h_{22} u_1 + h_{12}v_1)     \quad {\rm and } \quad x_2= \lambda(h_{21} u_2+ h_{11} v_2)  \,
\end{equation}
respectively. Here $\lambda \ge 1 $ is simply some scaling factor.  Thus, with reference to \eqref{eqn22},  we have that
\begin{equation}\label{eqn29}
\alpha_1=\lambda h_{22}, \ \beta_1=\lambda h_{12},   \ \alpha_2=\lambda h_{21}, \ \beta_2=\lambda h_{11}\, ,
\end{equation}
 and so \eqref{eqn23} and  \eqref{eqn24} become
\begin{eqnarray} \label{eqn30}
y_1  & = & \lambda\Big((h_{11} h_{22}) u_1   +  (h_{21} h_{12}) u_2  + (h_{11} h_{12})  (v_1 + v_2)\Big)  \\ [1ex]
\label{eqn31}
y_2  & = & \lambda\Big((h_{21} h_{12}) v_1   +  (h_{11} h_{22}) v_2  + (h_{21} h_{22})  (u_1 + u_2)\Big)  \, .
\end{eqnarray}

\noindent Clearly, there are now only 12 outcomes for either $y_1$ or $y_2$ rather than 16. The above discussion is a simplified version of that appearing in \cite[\S III: Example 3]{MR3245356} and  constitutes the basis for \emph{real interference alignment} - a concept introduced and developed in \cite{MR2451007, MGK, MR3245356} and subsequent publications.

\begin{remark} \label{history}
The original idea of interference alignment exploits the availability of `physical' dimensions  of wireless systems such as the frequency of the  signal or the presence of  multiple antennae. In short,  an antenna is a device (such as an old fashioned radio or television ariel) that is used to transmit or receive signals. In any case, by using several antennae it is possible for a user to simultaneously transmit several messages  and these can naturally be  thought of  as the coordinates of a point in a vector space, say $\R^n$. Thus, when analysing such wireless systems the transmitted signals can be treated as vectors in $\R^n$. The art of  interference alignment is to attempt to introduce an encoding at the transmitters (users) which result in unwanted (interfering) signals at the receivers being forced to lie in a subspace of $\R^n$ of smaller (ideally single) dimension. Such alignment is achieved by exploiting  elementary methods from linear algebra, see for instance \cite[Section 2.1]{JafarBook} for concrete examples and a detailed overview of the process. The novel idea of Motahari et al involves exploiting instead the abundance of rationally independent points in the real line $\R$. For instance, with reference to Example~2 above and the transmitted signals given by \eqref{eqn28}, assuming that $h_{22}/h_{12}$ is irrational, the signal $x_1$ transmitted by $S_1$ lies in the $2$-dimensional vector subspace of $\R$ over $\Q$ given by
$$
V_1=\lambda h_{22} \Q + \lambda h_{12}\Q  \, .
$$
Similarly, assuming that $h_{21}/h_{11}$ is irrational, the signal $x_2$ transmitted by $S_2$ lies in the $2$-dimensional vector subspace of $\R$ over $\Q$ given by
$$
V_2=\lambda h_{21} \Q + \lambda h_{11}\Q\,.
$$
In view of the alignment, the unwanted messages $v_1$ and $v_2$ at receiver $R_1$ are forced to lie in a subspace of $\R$ over $\Q$ of dimension one; namely $W_1=\lambda h_{11} h_{12}\Q$.   Similarly,
the unwanted messages $u_1$ and $u_2$  at receiver $R_2$ lie in the one-dimensional $\Q$-subspace $W_2=\lambda h_{21} h_{22}\Q$.
\end{remark}

As with the `baby' example, we  can easily modify  the above `binary' consideration to the more general situation when the messages $u_1, u_2, v_1, v_2$ are integers lying in $\{0,\dots,Q\}$; i.e.,  the setup of Example~2. It is easily seen that in this more general situation the savings coming from interference alignment are even more stark: there are $(2Q+1)(Q+1)^2\sim 2Q^3$ outcomes for either $y_1$ or $y_2$ after alignment as opposed to $(Q+1)^4\sim Q^4$ outcomes before alignment. Consequently, based on the outcomes for $y_1$ and $y_2$ after alignment being equally spaced, we have the following trivial estimates for the associated  minimal distances:
\begin{equation}\label{eqn32}
\dminVBone\le\frac{\lambda\Big(h_{11} h_{22}   +  h_{21} h_{12}  + 2h_{11} h_{12}\Big)Q}{(2Q+1)(Q+1)^2}
\end{equation}
and
\begin{equation}\label{eqn33}
\dminVBtwo\le\frac{\lambda\Big(h_{21} h_{12}   +  h_{11} h_{22}  + 2h_{21} h_{22}\Big)Q}{(2Q+1)(Q+1)^2}\,\,.
\end{equation}
\noindent We stress that $\dminVBone$ is the minimal distance between the outcomes of $y_1$ given by \eqref{eqn30} and $\dminVBtwo$ is the minimal distance between the outcomes of $y_2$ given by \eqref{eqn31}.
As in Example 1, `perfect' separation is essentially never the case and to demonstrate this we need to bring into play the appropriate  higher dimensional version of Dirichlet's theorem.

\begin{theorem}[Minkowski's theorem for systems of linear forms]\label{MTLF}
  Let $\beta_{i,j}\in\R$, where $1\le i,j\le k$, and let
  $\lambda_1,\dots,\lambda_k>0$. If
  \begin{equation}\label{eqn34}
    |\det(\beta_{i,j})_{1\le i,j\le k}|\le \prod_{i=1}^k\lambda_i,
  \end{equation}
  then there exists a non-zero integer point $\vvv a=(a_1,\dots,a_k)$
  such that
  \begin{equation}\label{eqn35}
    \left\{\begin{array}{lll}
             |a_1\beta_{i,1}+\dots+a_k\beta_{i,k}|<\lambda_i\,, && (1\le i\le k-1)\\[2ex]
             |a_1\beta_{k,1}+\dots+a_k\beta_{k,k}|\le \lambda_k\,.
           \end{array}\right.
       \end{equation}
\end{theorem}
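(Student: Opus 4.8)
The natural route is through \emph{Minkowski's convex body theorem}: if $K\subset\R^k$ is a compact, convex, origin-symmetric set with $\mathrm{vol}(K)\ge 2^k$, then $K$ contains a nonzero point of $\Z^k$. Throughout, write $L_i(\vvv x)=\sum_{j=1}^k\beta_{i,j}x_j$ for $1\le i\le k$, and let $T\colon\R^k\to\R^k$ be the linear map $\vvv x\mapsto(L_1(\vvv x),\dots,L_k(\vvv x))$, so that $|\det T|=|\det(\beta_{i,j})_{1\le i,j\le k}|$, and condition \eqref{eqn34} reads $|\det T|\le\prod_i\lambda_i$.

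Assume first $\det(\beta_{i,j})\ne 0$, so $T$ is a linear isomorphism. For $0<\ve<\min_i\lambda_i$ put $\mu_i=\lambda_i-\ve$ for $1\le i\le k-1$ and $\mu_k=\lambda_k\prod_{i=1}^{k-1}\lambda_i/(\lambda_i-\ve)$; this is arranged precisely so that $\prod_{i=1}^k\mu_i=\prod_{i=1}^k\lambda_i\ge|\det T|$. The set $K_\ve:=T^{-1}\bigl(\{\vvv y:\ |y_i|\le\mu_i,\ 1\le i\le k\}\bigr)$ is compact (as $T$ is an isomorphism), convex and origin-symmetric, with $\mathrm{vol}(K_\ve)=2^k\prod_i\mu_i/|\det T|\ge 2^k$. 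By Minkowski's theorem there is $\vvv a(\ve)\in\Z^k\setminus\{\vvv 0\}$ with $|L_i(\vvv a(\ve))|\le\mu_i$ for all $i$, hence $|L_i(\vvv a(\ve))|\le\lambda_i-\ve<\lambda_i$ for $i\le k-1$. To finish this case I pass to the limit: fix $\ve_0\in(0,\min_i\lambda_i)$; for all $\ve\in(0,\ve_0]$ the points $\vvv a(\ve)$ satisfy $|L_i(\vvv a(\ve))|\le\lambda_i$ $(i<k)$ and $|L_k(\vvv a(\ve))|\le\mu_k(\ve_0)$, so they lie in a single bounded set $T^{-1}([-R,R]^k)$ and thus form a finite subset of $\Z^k$. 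Hence some $\vvv a\ne\vvv 0$ equals $\vvv a(\ve_n)$ along a sequence $\ve_n\downarrow 0$; for this $\vvv a$ one gets $|L_i(\vvv a)|\le\lambda_i-\ve_n<\lambda_i$ for $i\le k-1$, and $|L_k(\vvv a)|\le\mu_k(\ve_n)\to\lambda_k$, so $|L_k(\vvv a)|\le\lambda_k$, which is exactly \eqref{eqn35}.

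For the degenerate case $\det(\beta_{i,j})=0$, note $\mathrm{Im}(T)$ is a subspace of some dimension $m\le k-1$ and $\min_i\lambda_i>0$, and a direct pigeonhole argument (in the spirit of Dirichlet's theorem) suffices. Set $\eta=\min_i\lambda_i$. For $N\in\N$ the $(N+1)^k$ points $T(\vvv a)$, $\vvv a\in\{0,1,\dots,N\}^k$, lie in the bounded $m$-dimensional region $\mathrm{Im}(T)\cap[-cN,cN]^k$, which can be covered by $O\bigl((N/\eta)^m\bigr)$ sets of diameter $<\eta$; since $m<k$, for $N$ large two distinct $\vvv a,\vvv a'$ have $\|T(\vvv a)-T(\vvv a')\|<\eta$. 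Then $\vvv a-\vvv a'\in\Z^k\setminus\{\vvv 0\}$ satisfies $|L_i(\vvv a-\vvv a')|\le\|T(\vvv a-\vvv a')\|<\eta\le\lambda_i$ for every $i$, again giving \eqref{eqn35}.

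The genuinely routine input is the application of the convex body theorem in the non-degenerate case; what prevents the theorem from being an immediate corollary of it is the mixed pattern of strict and non-strict inequalities in \eqref{eqn35} together with the fact that \eqref{eqn34} allows the boundary case $|\det(\beta_{i,j})|=\prod_i\lambda_i$. Both issues are resolved by a compactness/limiting device: the $\ve$-perturbation of the $\lambda_i$ (which keeps $\prod\mu_i$ equal to $\prod\lambda_i$) followed by the finiteness argument in the second paragraph, and the separate pigeonhole treatment of the singular matrix. I expect these limiting steps — rather than any computation — to be the only delicate point of the proof.
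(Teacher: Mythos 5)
Your proof is correct, and it follows exactly the route the paper indicates: the paper gives no proof of Theorem~\ref{MTLF}, merely remarking that the simplest argument uses Minkowski's convex body theorem, which is precisely what you do. Your extra care with the $\ve$-perturbation and finiteness argument to recover the strict inequalities in \eqref{eqn35} at the boundary case of \eqref{eqn34}, and the separate pigeonhole treatment of the singular matrix, correctly fills in the details that the cited references handle in the same spirit.
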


\vspace*{3ex}

\noindent The simplest proof of the theorem makes use of Minkowski's fundamental convex body theorem from the geometry of numbers; see, for instance \cite[\S1.4.1]{MR3618787} or, indeed, Chapter~\chaptwo{} of this book.

We now show how the minimal distance $\dminVBone$ (and similarly, $\dminVBtwo$) can be estimated from above using Minkoswki's theorem. For simplicity, consider the case when
\begin{equation}\label{eqn36}
 \max\{h_{11} h_{22},\,h_{21} h_{12}\,,h_{11} h_{12}\}=   h_{11} h_{12}\,;
\end{equation}
that is,  $h_{11} \ge h_{21}$ and $h_{12}\ge h_{22}$.
Then, on applying  Theorem~\ref{MTLF} with $k=3$,
$
\lambda_1=(h_{11} h_{12})Q^{-2}, $  $\lambda_2=\lambda_3=Q $
and
$$
(\beta_{i,j})_{1\le i,j\le k}=\left(\begin{array}{ccc}
                                      \,h_{11} h_{22}\, & \,h_{21} h_{12}\, & \,h_{11} h_{12}\, \\
                                      1 & 0 & 0 \\
                                      0 & 1 & 0
                                    \end{array}
\right) ,
$$ we deduce the  existence of integers $a_1$, $a_2$ and $a_3$, not all zero, such that
\begin{equation}\label{eqn37}
\left\{
\begin{array}{l}
|(h_{11} h_{22}) a_1   +  (h_{21} h_{12}) a_2  + (h_{11} h_{12})a_3|< (h_{11} h_{12})Q^{-2}\,,\\[2ex]
|a_1|< Q,\\[2ex]
|a_2|\le Q\,.
\end{array}
\right.
\end{equation}

\begin{remark}
It is worth pointing out that the argument just given above  can be appropriately  adapted  to establish the following generalisation of Dirichlet's theorem.  For the details see for instance  \cite[Corollary 1.4.7]{MR3618787}.  Here and throughout, given a point  $\vvv x=(x_1,\dots,x_n)  \in \R^n$ we let $|\vvv x|:= \max\{|x_1|,\dots,|x_n|\}  \, . $

\vspace*{2ex}

\begin{theorem}\label{thm3}
For any $\bm\xi=(\xi_1,\dots,\xi_n)\in \R^n$ and any $Q\in\N$ there exists  $(p,\vvv q) \in \Z \times \Z^{n}$ such that

\begin{equation}\label{eqn38}
\left|q_1\xi_1+\dots+q_n\xi_n+p\right|<\frac{1}{Q^n}    \qquad \textrm{ and }  \qquad  1 \leq  |\vvv q| \leq Q \, .
\end{equation}
\end{theorem}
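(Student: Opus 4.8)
The plan is to deduce Theorem~\ref{thm3} directly from Theorem~\ref{MTLF} (Minkowski's theorem for systems of linear forms) by choosing the matrix of linear forms and the bounds $\lambda_i$ appropriately, exactly as was done in the passage leading to \eqref{eqn37}. Set $k=n+1$. I would take the system of linear forms in the integer variables $\vvv a=(a_1,\dots,a_{n+1})$ to be: the first (``special'') form $a_1\xi_1+\dots+a_n\xi_n+a_{n+1}$, and then the $n$ coordinate forms $a_1,\dots,a_n$. In matrix shape,
\begin{equation*}
(\beta_{i,j})_{1\le i,j\le n+1}=\begin{pmatrix} \xi_1 & \xi_2 & \cdots & \xi_n & 1 \\ 1 & 0 & \cdots & 0 & 0 \\ 0 & 1 & \cdots & 0 & 0 \\ \vdots & & \ddots & & \vdots \\ 0 & 0 & \cdots & 1 & 0 \end{pmatrix},
\end{equation*}
whose determinant is $\pm 1$ by cofactor expansion along the last column. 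I would put this special form in the last slot of \eqref{eqn35} (the one allowing $\le$), so that it is the row that needs the non-strict inequality; that is, I reorder so that row $k$ is the $\xi$-form with bound $\lambda_k=Q^{-n}$, and rows $1,\dots,n$ are the coordinate forms $a_j$ with bounds $\lambda_1=\dots=\lambda_n=Q$.

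Next I check the hypothesis \eqref{eqn34}: $|\det(\beta_{i,j})|=1$ while $\prod_i\lambda_i = Q^n\cdot Q^{-n}=1$, so the inequality $|\det|\le\prod\lambda_i$ holds (with equality), and Theorem~\ref{MTLF} applies. It yields a non-zero integer vector $\vvv a=(a_1,\dots,a_n,a_{n+1})$ with $|a_j|<Q$ for $1\le j\le n$ and $|a_1\xi_1+\dots+a_n\xi_n+a_{n+1}|\le Q^{-n}$. Setting $\vvv q=(a_1,\dots,a_n)$ and $p=a_{n+1}$ gives most of \eqref{eqn38}; it remains to tidy up two points, namely to upgrade the strict inequality $|\vvv q|\le Q$ wanted in \eqref{eqn38} (here I already have $|\vvv q|<Q$, hence $\le Q$, which is fine) versus the strict ``$<\tfrac1{Q^n}$'' wanted on the left, and to rule out $\vvv q=\vvv 0$.

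The only genuinely fiddly step — the main obstacle, such as it is — is reconciling the strict/non-strict inequalities and ensuring $\vvv q\neq\vvv 0$, i.e.\ $1\le|\vvv q|$. If $\vvv q=\vvv 0$ then non-vanishing of $\vvv a$ forces $p=a_{n+1}\neq0$, but then $|a_1\xi_1+\dots+a_n\xi_n+p|=|p|\ge1>Q^{-n}$ (using $Q\ge1$, $n\ge1$), contradicting the bound; hence $\vvv q\neq\vvv 0$. For the strictness on the left, the standard device is to run the argument with $Q$ replaced by $Q+1$ (or with $\lambda_k=Q^{-n}$ replaced by a value slightly below, exploiting that \eqref{eqn34} only needs ``$\le$''): applying Theorem~\ref{MTLF} with parameters $\lambda_1=\dots=\lambda_n=Q$, $\lambda_k=Q^{-n}$ already gives $|\cdot|\le Q^{-n}$; to force $<$, note that the $\xi$-form lies in the \emph{last} slot of \eqref{eqn35}, so instead I place it among the first $k-1$ slots — put one coordinate form $a_n$ in the last slot with $\lambda_k=Q$ (non-strict, harmless since $|a_n|$ is an integer $<Q+1$ would be needed; cleaner: take $\lambda_n=Q$ for that slot and observe an integer $\le Q$ is $<Q+1$). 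In the write-up I would simply state the cleanest arrangement: forms $a_1\xi_1+\dots+a_n\xi_n+a_{n+1}$ and $a_1,\dots,a_{n-1}$ in the strict slots with bounds $Q^{-n},Q,\dots,Q$, and $a_n$ in the non-strict slot with bound $Q$; then the left inequality is strict, $|a_j|<Q$ for $j<n$ and $|a_n|\le Q$ give $|\vvv q|\le Q$, and the vanishing argument above finishes it. This is routine bookkeeping; no real difficulty arises, which is why the result is quoted as a corollary.
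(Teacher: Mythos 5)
Your proposal is correct and follows essentially the same route the paper intends: Theorem~\ref{thm3} is deduced from Theorem~\ref{MTLF} exactly as in the passage leading to \eqref{eqn37}, with the $\bm\xi$-form placed in a strict slot with bound $Q^{-n}$, the coordinate forms $a_1,\dots,a_{n-1}$ in strict slots with bound $Q$, and $a_n$ in the non-strict slot (the paper itself only sketches this and cites \cite[Corollary 1.4.7]{MR3618787}). One cosmetic point: in ruling out $\vvv q=\vvv 0$ you write $|p|\ge 1>Q^{-n}$, which fails for $Q=1$; the clean statement is that the strict bound $|p|<Q^{-n}\le 1$ forces $p=0$ and hence $\vvv a=\vvv 0$, a contradiction.
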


\end{remark}

\bigskip

We now return to determining an upper bound for $\dminVBone$. A consequence of \eqref{eqn37} is that for any given $Q\ge1$ there exist integers $a_1,a_2,a_3$, not all zero, such that
$$
\left|\frac{h_{11} h_{22}}{h_{11} h_{12}} a_1   +  \frac{h_{21} h_{12}}{h_{11} h_{12}} a_2  + a_3\right|< Q^{-2}\le1\,.
$$
This together with the triangle inequality implies that
$$
|a_3|<
\left|\frac{h_{11} h_{22}}{h_{11} h_{12}} a_1\right|   +  \left|\frac{h_{21} h_{12}}{h_{11} h_{12}} a_2\right|  + 1,
$$
and so in view of our `maximal' assumption \eqref{eqn36}, it follows that
$$
|a_3|<|a_1|+|a_2|+1\le Q+(Q-1)+1=2Q\,.
$$
Now observe that the quantity
$$
\lambda\times\big|(h_{11} h_{22}) a_1   +  (h_{21} h_{12}) a_2  + (h_{11} h_{12})a_3\big|
$$
is precisely the distance between the two specific outcomes of $y_1$ associated with \eqref{eqn30} given by the following choices:
$$
\begin{array}{llll}
\text{Choice 1: }&u_1=\max\{0,a_1\}, &u_2=\max\{0,a_2\}, &v_1+v_2=\max\{0,a_3\},\\[1ex]
\text{Choice 2: }&u_1=\max\{0,-a_1\}, &u_2=\max\{0,-a_2\}, &v_1+v_2=\max\{0,-a_3\}.
\end{array}
$$
We have just observed that Theorem~\ref{MTLF} guarantees that  $|a_1|\le Q$, $|a_2|\le Q$ and $|a_3|\le 2Q$ and so $u_1, u_2, v_1, v_2$ are integers lying in $\{0,\dots,Q\}$.  Hence, in view of  \eqref{eqn37} it follows (under the assumption  \eqref{eqn36}) that
\begin{equation}\label{eqn39}
\dminVBone\le \frac{\lambda  \,  h_{11} h_{12}}{Q^{2}}  \ = \ \frac{C_2}{Q^{2}}  \, ,\qquad\text{where } \ C_2:=\lambda\,h_{11}h_{12}\,.
\end{equation}
For all intents and purposes, this bound on the minimal distance is smaller than the `perfect' separation estimate given by
\eqref{eqn32}. A similar analysis can be carried out  when the maximum in \eqref{eqn36} is attained on another term, and for estimating $\dminVBtwo$. Obviously the parameter $C_2$ would reflect the situation under consideration.

As mentioned  earlier, the receivers $R_1$ and $R_2$ can decode the respective messages provided that the respective minimal distances $\dminVBone$ and $\dminVBtwo$ are at least  two times  larger than the noise at each receiver.  Given that the nature of noise is often a random variable with normal distribution, the overarching goal  is to ensure the probability that $|z_1|<\tfrac12\dminVBone$ and $|z_2|<\tfrac12\dminVBtwo$ is large. Indeed, as in Example 1,  the larger the probability the more likely the receivers $R_i$ $(i=1,2)$ are able to recover messages by rounding $ y'_i $ (given by \eqref{eqn27}) to the closest  possible outcome of $y_i$ (given by \eqref{eqn30} if $i=1$  and \eqref{eqn31} if $i=2$). It is therefore imperative to understand how close $\dminVBone$ and $\dminVBtwo$ can be to their theoretical upper bounds. With this in mind we now describe  various tools and notions from Diophantine approximation that can be used for this purpose.  In short, they allow us to get within a constant factor of the theoretical upper bounds.  As in Example 1, we start  by attempting to manipulate the encoding process so as to exploit the existence of badly approximable points in $\R^n$.
Before we embark on this discussion we make a remark concerning the scaling factor $\lambda$ that first appears in \eqref{eqn28}.

\begin{remark} Observe that estimating $\dminVBone$ and $\dminVBtwo$ from below is essentially the same as estimating from below the size of the linear forms
\begin{eqnarray}
&&(h_{11} h_{22}) u_1   +  (h_{21} h_{12}) u_2  + (h_{11} h_{12})  (v_1 + v_2)\,,\label{eqn40}  \\ [1ex]
&&(h_{21} h_{12}) v_1   +  (h_{11} h_{22}) v_2  + (h_{21} h_{22})  (u_1 + u_2)\,.\label{eqn41}
\end{eqnarray}
The factor $\lambda$ appearing in \eqref{eqn30} and \eqref{eqn31} only determines the scaling of $\dminVBone$ and $\dminVBtwo$ and can be used to `adjust' these quantities, namely, to prevent them from vanishing as $Q$ grows, see Remark~\ref{vanishing} for a similar consideration within Example~1. Indeed, the effect of multiplication by $\lambda$ can be simply understood as increasing the separation in the constellation of messages; i.e. the messages $u_1,v_1, u_2, v_2$ could be associated with  $\{0,\lambda,2\lambda,3\lambda,\dots,Q\lambda\}$ instead of $\{0,1,2,3,\dots,Q\}$.
\end{remark}

\subsection{Badly approximable points}  \label{BAD20}

We start by stating the following simple consequence of Theorem~\ref{thm3}.  It is the higher dimensional  analogue  of Corollary~\ref{cor1}.

\begin{corollary}\label{cor2}
For any point $\bm\xi\in\R^n$ there exists infinitely many $(p,\vvv q) \in \Z \times \Z^{n}\backslash \{\vvv 0\} $ such that
  \begin{equation}\label{eqn42}
    |q_1\xi_1+\dots+q_n\xi_n+p| < \frac{1}{|\vvv q|^n}\,  .
  \end{equation}
\end{corollary}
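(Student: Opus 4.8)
The plan is to deduce Corollary~\ref{cor2} from Theorem~\ref{thm3} by the standard "letting $Q\to\infty$" device. Fix $\bm\xi=(\xi_1,\dots,\xi_n)\in\R^n$. For each integer $Q\ge 1$, Theorem~\ref{thm3} produces $(p_Q,\vvv q_Q)\in\Z\times\Z^n$ with $1\le|\vvv q_Q|\le Q$ and
\[
|q_{Q,1}\xi_1+\dots+q_{Q,n}\xi_n+p_Q|<\frac{1}{Q^n}\le\frac{1}{|\vvv q_Q|^n},
\]
the last inequality because $|\vvv q_Q|\le Q$. So each such $(p_Q,\vvv q_Q)$ already satisfies \eqref{eqn42}; the only issue is to extract \emph{infinitely many distinct} solutions.

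First I would dispose of the rational/degenerate case. If the linear form $L(\vvv q):=q_1\xi_1+\dots+q_n\xi_n$ takes a value in $\Z$ for some nonzero integer vector $\vvv q$ — equivalently, if $1,\xi_1,\dots,\xi_n$ are linearly dependent over $\Q$ — then there is a nonzero $\vvv q\in\Z^n$ and $p\in\Z$ with $q_1\xi_1+\dots+q_n\xi_n+p=0$; taking integer multiples $(kp,k\vvv q)$ for $k=1,2,3,\dots$ gives infinitely many solutions with left-hand side exactly $0<|k\vvv q|^{-n}$, and we are done. (One should note these need not be with $\gcd$ conditions, but \eqref{eqn42} as stated imposes none.)

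Second, suppose $1,\xi_1,\dots,\xi_n$ are linearly independent over $\Q$. Then $L(\vvv q)+p\ne 0$ for every $(p,\vvv q)$ with $\vvv q\ne\vvv 0$, so the quantity $\delta_Q:=|q_{Q,1}\xi_1+\dots+q_{Q,n}\xi_n+p_Q|$ is strictly positive for every $Q$, and $\delta_Q<Q^{-n}\to 0$. The key observation is that only finitely many pairs $(p,\vvv q)$ can serve as $(p_Q,\vvv q_Q)$ for arbitrarily large $Q$: indeed, if $(p,\vvv q)$ were chosen for infinitely many $Q$, then $|L(\vvv q)+p|<Q^{-n}$ for those arbitrarily large $Q$, forcing $L(\vvv q)+p=0$, contradicting linear independence. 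Hence, as $Q$ ranges over $\N$, the solutions $(p_Q,\vvv q_Q)$ cannot all coincide with finitely many vectors; choosing a sequence $Q_1<Q_2<\dots$ along which the pairs are pairwise distinct yields infinitely many $(p,\vvv q)\in\Z\times(\Z^n\setminus\{\vvv 0\})$ satisfying \eqref{eqn42}.

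The only mildly delicate point — and the one I would write out carefully rather than wave at — is this last extraction of an infinite subsequence of distinct pairs; everything else is a direct substitution into Theorem~\ref{thm3}. There is no real obstacle: the argument is the exact higher-dimensional transcription of the passage from Dirichlet's theorem (Theorem~\ref{Dir}) to its Corollary~\ref{cor1}, with the two-case split (rational dependence versus independence of $1,\xi_1,\dots,\xi_n$) playing the role of the rational-versus-irrational dichotomy for $\xi$ in the one-dimensional setting.
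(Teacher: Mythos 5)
Your proposal is correct and follows exactly the route the paper intends: Corollary~\ref{cor2} is presented there as a direct consequence of Theorem~\ref{thm3} via the standard $Q\to\infty$ extraction, and your treatment of the rationally dependent case by taking non-primitive multiples $(kp,k\vvv q)$ is precisely the point the paper flags in the remark following the corollary (no primitivity is imposed, so rational hyperplanes cause no trouble).
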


Note that in the corollary we have not  imposed the condition that $\bm\xi$ is not a point on a rational hyperplane.  This is since we do not impose, as in the one-dimensional statement,  the requirement that $(p,\vvv q)$ is {\em primitive}; that is, without a non-trivial common divisor. Naturally, badly approximable points in $\R^n$ are defined by requiring that the right hand side of \eqref{eqn42} cannot be `improved' by an arbitrary constant factor. This we now formally state.

\begin{definition}[Badly approximable points]  \label{BADdef}
A point $\bm\xi \in\R^n$ is said to be {\em badly approximable} if there exists a constant $\kappa=\kappa(\bm\xi)>0$ such that for all $(p,\vvv q) \in \Z \times \Z^{n}\backslash \{\vvv 0\} $
  \begin{equation}\label{eqn43}
    |q_1\xi_1+\dots+q_n\xi_n+p| \ge \frac{\kappa}{|\vvv q|^n}\, .
  \end{equation}
\end{definition}

\noindent The set of badly approximable points in $\R^n$ will be denoted by $\Bad(n)$.
It is relatively simple to verify that for any real algebraic number $\xi$ of degree $n+1$ the point $(\xi,\xi^2,\dots,\xi^n) \in \R^n$ is badly approximable. Indeed, consider the norm of the algebraic number
$$
\alpha_1=q_1\xi+q_2\xi^2+\dots+q_n\xi^n+p\in\Q(\xi)
$$
which (up to sign) is the product of $\alpha_1$ and its other conjugates, say $\alpha_2,\dots,\alpha_{n+1}$. For simplicity one can assume that $\xi$ is an algebraic integer. Furthermore, we can assume that the right hand side of \eqref{eqn43} is less than one and so without loss of generality we have that $|p| \ll |\vvv q| $.   Then, it is easily seen that $|\alpha_j|\ll|\vvv q|$ for all $j$, while the norm of $\alpha_1$ is bounded below by $1$. Here and elsewhere $\gg$ (respectively,  $\ll$) is the Vinogradov symbol meaning $\ge$ (respectively $\le$) up to a multiplicative constant factor. The upshot is that
$$
|q_1\xi+q_2\xi^2+\dots+q_n\xi^n+p|=|\alpha_1|\gg\prod_{j=2}^{n+1}|\alpha_j|^{-1}\gg |\vvv q|^{-n} \, ,
$$
whence the claim that $(\xi,\xi^2,\dots,\xi^n)\in\Bad(n)$ follows. This argument can be made explicit to obtain a specific lower bound for the badly approximable constant $\kappa(\xi,\dots,\xi^n)$. Examples of badly approximable algebraic points of this ilk were first given by Perron \cite{MR1512000}.

The reason for us bringing into play the notion of badly approximable numbers is similar to that in Example 1. If the channel coefficients happen to be such that
\begin{equation}\label{eqn44}
\bm\xi=(\xi_1, \xi_2):= \left(\frac{h_{11} h_{22}}{h_{11} h_{12}},\frac{h_{21} h_{12}}{h_{11} h_{12}}\right)
=\left(\frac{h_{22}}{h_{12}},\frac{h_{21}}{h_{11}}\right)
\end{equation}
is a badly approximable point in $\R^2$, then we are guaranteed the existence of a constant $\kappa(\bm\xi)>0$ such that
$$
\left|\frac{h_{11} h_{22}}{h_{11} h_{12}} q_1   +  \frac{h_{21} h_{12}}{h_{11} h_{12}} q_2  + p\right|\ge \frac{\kappa(\bm\xi)}{|\vvv q|^2}
$$
for all non-zero integer points $(p,\vvv q) \in \Z \times \Z^{2}\backslash \{\vvv 0\} $. Thus, it follows that for every $Q \in \N$:
$$
\left|h_{11} h_{22} q_1   +  h_{21} h_{12}q_2  + h_{11} h_{12}p\right|\ge \frac{\kappa(\bm\xi) h_{11} h_{12}}{Q^{2}}\,
$$
for all $(q_1,q_2,p)\in\Z^3$ with $1 \le |\vvv q | \le Q $,  and so the separations between any two points given by \eqref{eqn30} is at least $\frac{\kappa(\bm\xi) \lambda h_{11} h_{12}}{Q^{2}}$.
In other worlds,
\begin{equation}\label{eqn45}
\dminVBone\ge     \frac{\kappa(\bm\xi) C_2}{Q^2}  \,
\end{equation}
which complements the upper bound \eqref{eqn39}.
Note that instead of \eqref{eqn44} one can equivalently consider $\bm\xi=(\xi_1, \xi_2) $   to be  either of the points
\begin{equation}\label{eqn46}
\left(\frac{h_{21} h_{12}}{h_{11} h_{22}},\frac{h_{11} h_{12}}{h_{11} h_{22}}\right),
\quad\left(\frac{h_{11} h_{22}}{h_{21} h_{12}},\frac{h_{11} h_{12}}{h_{21} h_{12}}\right)\,,
\end{equation}
which will also be badly approximable if \eqref{eqn44} is badly approximable.
Thus, we can in fact show that \eqref{eqn45} with appropriately adjusted constant $\kappa(\bm\xi)$ holds with $C_2$ redefined as
\begin{equation}\label{eqn47}
C_2:=\max\{h_{11} h_{22},\,h_{21} h_{12}\,,h_{11} h_{12}\}\,.
\end{equation}

\noindent A similar lower bound to  \eqref{eqn45} can be established for $\dminVBtwo$ if
\begin{equation}\label{eqn48}
\left(\frac{h_{21} h_{12}}{h_{21} h_{22}}    , \frac{h_{11} h_{22}}{h_{21} h_{22}}\right)
=\left(\frac{h_{12}}{h_{22}}    , \frac{h_{11} }{h_{21} }\right)
\end{equation}
or equivalently
\begin{equation}\label{eqn49}
\left(\frac{h_{11} h_{22}}{h_{21} h_{12}}, \frac{h_{21} h_{22}}{h_{21} h_{12}}\right)~\text{or}~
\left(\frac{h_{21} h_{12}}{h_{11} h_{22}}, \frac{h_{21} h_{22}}{h_{11} h_{22}}\right)
\end{equation}
is a badly approximable point in $\R^2$.

\begin{remark}
We end this subsection with a short discussion that  brings to the forefront  the significant difference between  Examples 1 $\&$  2,  in attempting  to exploit the existence of badly approximable points.  In short, the encoding process
\eqref{eqn29}  leading to the alignment of the unwanted signals  in \eqref{eqn30} and \eqref{eqn31} comes at a cost.  Up to a scaling factor,  it fixes the parameters $ \alpha_1, \alpha_2, \beta_1, \beta_2$ in terms of  the given channel coefficients.  This in turn, means that our analysis of the linear forms  \eqref{eqn40} and \eqref{eqn41} gives rise to the points
\eqref{eqn44} and  \eqref{eqn48} in $\R^2$  that are dependent purely on the channel coefficients.  Now either these points are in $\Bad(2)$  or not. In other words,  there is no flexibility left in the encoding procedure (after  alignment) to force \eqref{eqn44} or \eqref{eqn48} to  be badly approximable in $\R^n$.
This is very different to the situation in Example~1.  There we had total freedom to choose the  parameters $\alpha$ and $\beta$  in order to force the point \eqref{eqn10} to be a badly approximable number.
The upshot is that in Example 2, there is no such flexibility and this exacerbates the fact that the probability of \eqref{eqn44} or \eqref{eqn48} being badly approximable is already zero.  The fact that $\Bad(n)$ has measure zero can be easily deduced from Khintchine's theorem, which will be discussed below in \S\ref{sv2.4}  - however see \S\ref{FR} for the actual derivation. Although of measure zero, for the sake of completeness, it is worth mentioning that $\Bad(n)$ is of full Hausdorff dimension, the same as the whole of $\R^n$. This was established by Schmidt \cite{MR195595, MR248090}  as an application of his remarkably powerful theory of $(\alpha,\beta)$-games.  In fact, he proved the full dimension statement for  badly approximable sets associated with systems of linear forms (see \S\ref{FR}).
\end{remark}

\begin{remark}\label{badbad}
 We note that if $\bm\xi$ is any of the points \eqref{eqn44} or \eqref{eqn46} and $\bm\xi'$ is any of the points \eqref{eqn48} or \eqref{eqn49}, then in order to simultaneously guarantee \eqref{eqn45} and its analogue for $\dminVBtwo$ both $\bm\xi$ and $\bm\xi'$ need to be badly approximable. This adds more constraints to an already unlikely (in probabilistic terms) event, since
the points $\bm\xi$ and $\bm\xi'$ are dependent. Indeed, concerning the latter,  it is easily seen that
\begin{equation}\label{eqn50}
  \bm\xi' = \bm f(\bm\xi)
\end{equation}
for one of the following choices of $\bm f:\R^2\to\R^2$
\begin{equation}\label{eqn51}
\bm f(x,y)=\left(\frac{1}{x},\frac{1}{y}\right),~
\left(x, \frac{x}{y}\right),~
\left(\frac{x}{y}, x\right),~
\left(y, \frac{y}{x}\right),~\text{or}~
\left(\frac{y}{x}, y\right).
\end{equation}
Clearly,  the set of pairs $(\bm\xi,\bm\xi')$ of badly approximable points confined by \eqref{eqn50} is a subset of the already  measure zero set  $\Bad (2)\times \Bad (2)$.  Nevertheless, they do exist, as was proved by Davenport \cite{MR166154}, and are in ample supply in the following sense: the set of pairs $(\bm\xi,\bm\xi')$ of badly approximable points subject to \eqref{eqn50} has full Hausdorff dimension, which is two. In other words, the dimension of $\Bad(2)\cap\bm f(\Bad(2))$ is equal to the dimension of $\Bad(2)$. This follows from the results of \cite{MR2981929}.
\end{remark}

\subsection{Probabilistic aspects} \label{probasn} In this section,
we consider within the higher dimensional context of Example 2,
the probabilistic approach set
out in \S\ref{Prob1}. Given $0<\kappa'<1$ and $Q\in\N$,
let $\cB_n(Q,\kappa')$ be the set of $\bm\xi\in \II^n:=(0,1)^n$ such that
\begin{equation}\label{eqn52}
\left|q_1\xi_1+\dots+q_n\xi_n+p\right|\ge\frac{\kappa'}{Q^n}
\vspace*{2ex}
\end{equation}
for all integer points $(p,\vvv q) \in \Z \times \Z^{n}$ such that $1\le |\vvv q|\le Q$. Note that $\bm\xi\in\cB_n(Q,\kappa')$ are precisely the points  in  $\II^n$ for which the right hand side of inequality  \eqref{eqn38} appearing in Dirichlet's $n$-dimensional theorem,  cannot be improved by the factor of $\kappa'$ ($Q$ is fixed here).
To estimate the probability of $\cB_n(Q,\kappa')$, we consider the complementary inequality
\begin{equation}\label{eqn53}
\left|q_1\xi_1+\dots+q_n\xi_n+p\right|<\frac{\kappa'}{Q^n}\,.
\end{equation}
Let $1\le |\vvv q|\le Q$. Then for a fixed $\vvv q$,   it can be verified  that the probability that a given $\bm\xi\in \II^n$ satisfies \eqref{eqn53}
for some $p\in\Z$ is exactly $2\kappa'Q^{-n}$  -- this is a relatively straightforward  calculation  the details of which can be found in \cite[Lemma~8]{MR548467}. On summing up these probabilities over $\vvv q$ with $q_1\ge0$ (this can be assumed without loss of generality),  we conclude that the probability that a given $\bm\xi\in \II^n$ satisfies \eqref{eqn53} for some integers $p$ and $1\le |\vvv q|\le Q$,  is bounded above by
$$
2 \kappa'Q^{-n}(2Q+1)^{n-1}(Q+1)\sim 2^n\kappa'\qquad\text{(as $Q\to\infty$)}.
$$
This in turn implies the following statement.

\begin{lemma} \label{mum2}
For any $0<\kappa'<1$ and any $Q\in\N$
\begin{equation}\label{eqn54}
{\bf Prob}(\cB_n(Q,\kappa'))\ge 1-2^n \kappa'\left(1+\frac{1}{2Q}\right)^{n-1}\left(1+\frac1Q\right)\,.
\end{equation}
\end{lemma}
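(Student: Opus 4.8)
The plan is to mirror the one-dimensional argument of Lemma~\ref{mum}, replacing the union over $1\le q\le Q$ by the union over integer vectors $\vvv q$ with $1\le|\vvv q|\le Q$ and controlling the total measure by a union bound together with the count of admissible $\vvv q$. Concretely, for each fixed nonzero $\vvv q\in\Z^n$ with $q_1\ge 0$ (which we may assume without loss of generality, since replacing $(p,\vvv q)$ by $(-p,-\vvv q)$ does not change \eqref{eqn53}), consider
\[
F_{\vvv q} := \Big\{ \bm\xi\in\II^n : \left|q_1\xi_1+\dots+q_n\xi_n+p\right|<\tfrac{\kappa'}{Q^n}\ \text{for some }p\in\Z\Big\}.
\]
The first step is to record that $\mathbf{Prob}(F_{\vvv q}) = 2\kappa' Q^{-n}$ exactly; this is the slab-counting computation, and I would simply cite \cite[Lemma~8]{MR548467} as the excerpt already does. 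Then the complement of $\cB_n(Q,\kappa')$ is contained in $\bigcup_{\vvv q} F_{\vvv q}$, the union taken over $\vvv q\in\Z^n$ with $q_1\ge 0$ and $1\le|\vvv q|\le Q$.

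The second step is the counting: the number of $\vvv q=(q_1,\dots,q_n)\in\Z^n$ with $0\le q_1\le Q$ and $|q_j|\le Q$ for $2\le j\le n$ is $(Q+1)(2Q+1)^{n-1}$, and this certainly bounds the number of admissible $\vvv q$ in our union (we are over-counting slightly by ignoring the lower bound $|\vvv q|\ge 1$ and by not excluding $q_1=0$ vectors that are double-counted, but an upper bound is all we need). Combining with the union bound gives
\[
\mathbf{Prob}\Big(\II^n\setminus\cB_n(Q,\kappa')\Big)\ \le\ (Q+1)(2Q+1)^{n-1}\cdot\frac{2\kappa'}{Q^n}\ =\ 2^n\kappa'\Big(1+\tfrac1Q\Big)\Big(1+\tfrac{1}{2Q}\Big)^{n-1},
\]
and taking complements yields \eqref{eqn54}. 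I would present the factorisation of $(Q+1)(2Q+1)^{n-1}Q^{-n}$ as $\big(1+\tfrac1Q\big)\big(1+\tfrac{1}{2Q}\big)^{n-1}$ explicitly so the reader sees where the stated form comes from.

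There is essentially no serious obstacle here — the lemma is a soft union-bound estimate and, unlike Lemma~\ref{mum}, it makes no attempt to remove overlaps between the sets $F_{\vvv q}$ (which is exactly why the constant $2^n$ is not improved). The only point requiring a little care is the bookkeeping in the $\vvv q$-count: one must be careful to impose the sign normalisation $q_1\ge 0$ so as not to double the bound, and to check that the crude box count $(Q+1)(2Q+1)^{n-1}$ genuinely dominates the number of $\vvv q$ appearing in the union. A secondary, purely expository, point is to make clear that the per-$\vvv q$ probability $2\kappa' Q^{-n}$ is \emph{exact} (not just an upper bound) so that the reader is not misled into thinking a sharper estimate is hiding in that step; the slack is entirely in the union bound. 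I would therefore keep the proof to a few lines, emphasising the union-bound philosophy and deferring the slab computation to the cited reference.
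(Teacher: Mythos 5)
Your proposal is correct and follows essentially the same route as the paper: the per-$\vvv q$ probability of \eqref{eqn53} is exactly $2\kappa'Q^{-n}$ (cited to \cite[Lemma~8]{MR548467}), one sums over $\vvv q$ with $q_1\ge 0$, and the crude count $(Q+1)(2Q+1)^{n-1}$ together with the union bound yields $2^n\kappa'\left(1+\frac{1}{2Q}\right)^{n-1}\left(1+\frac1Q\right)$. The only difference is presentational: you make the factorisation and the sign-normalisation bookkeeping explicit, which the paper leaves implicit.
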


\bigskip

\noindent Similarly to the one-dimensional case (cf.~\S\ref{Prob1}),   the above trivial estimate can be improved, however, we leave this task to the energetic  reader. We also note that the probability in Lemma~\ref{mum2} is assumed to be uniform but it is possible to obtain a version of Lemma~\ref{mum2} for other (absolutely continuous) distributions as mentioned in Remark~\ref{vb3978}. In any case, the upshot of the above discussion is that for sufficiently small $\kappa'>0$  the probability that the point $\bm\xi$ given by \eqref{eqn44}  modulo $1$ belongs to $\cB_n(Q,\kappa')$ is positive. Hence, it follows that for any $\rho \in(0,1)$ there exists an explicitly computable constant $\kappa'>0$ with the following  property:  with probability greater than $ \rho$,  for a random choice of the four channel coefficients $h_{ij}$  $(i,j=1,2)$,  the separation between the associated  points $y_1$ given by \eqref{eqn30}  is at least $\kappa' C_2/ Q^{2} $,  and so  the minimal distance $\dminVBone$  satisfies
\begin{equation}\label{eqn55}
\dminVBone  \ge \frac{\kappa' C_2  \ }{Q^{2}}.
\end{equation}
Moreover, the probability $ \rho$ can be made arbitrarily close to one.  However, the  cost is that the constant $\kappa'$  becomes  arbitrarily small.  The above analysis holds equally well at receiver $R_2$ and we obtain an analogous probabilistic bound for the minimal distance $\dminVBtwo $  associated  with the points $y_2$ given by \eqref{eqn31}.

\begin{remark}  \label{mummie}
Obviously  \eqref{eqn55} is a better lower bound for $\dminVBone   $  than \eqref{eqn45}  whenever  $\kappa'$ is greater than the badly approximable constant $\kappa(\bm\xi)$ appearing in \eqref{eqn45}. However, this really is not the point -- both approaches yield lower bounds for the minimal distance that lie within a constant factor of the theoretic upper bound \eqref{eqn39}.  The  main point is that the  badly approximable approach has zero probability of actually delivering \eqref{eqn45} whereas the  probabilistic approach yields \eqref{eqn45} with positive probability (whenever  $\kappa(\bm\xi)$ is sufficiently small so that the right hand side of \eqref{eqn54} with $\kappa' = \kappa(\bm\xi)$  is positive).
\end{remark}

\begin{remark}  \label{badbadprob}
In the same vein as Remark~\ref{badbad}, we first observe  that  in order to simultaneously  guarantee \eqref{eqn55} and its analogue for $\dminVBtwo$, both the points $\bm\xi$ and $\bm\xi' $ modulo one, where $\bm\xi$ is given by \eqref{eqn44} or \eqref{eqn46} and $\bm\xi'$ is given by \eqref{eqn48} or \eqref{eqn49}, need to simultaneously lie in $\cB_n(Q,\kappa')$. Thus to
 obtain the desired (simultaneous) probabilistic statement,  we need to show the probability  of both $\bm\xi$ and $\bm\xi' $ modulo one belonging to $\cB_n(Q,\kappa')$ is positive; say $1- \kappa'$ in line with \eqref{eqn54}.  This would be an easy task if the points  under consideration were independent.  However,  the points $\bm\xi$ and $\bm\xi'$ are confined by \eqref{eqn50} and therefore the events $\bm\xi({\rm mod 1})\in\cB_n(Q,\kappa')$ and $\bm\xi'({\rm mod 1})\in \cB_n(Q,\kappa')$ are dependent. Nevertheless, it can be shown that the probability of these two events holding simultaneously is at least $1- \sigma \times\kappa'$, where $\sigma$ is an explicitly computable positive constant.  We leave the details to the extremely energetic reader.
\end{remark}

\begin{remark}\label{mate}
For another specific  (and powerful) application of the probabilistic approach outlined in this section we refer the reader to \cite{NM13}.  In short,  in \cite{NM13}  the probabilistic approach is used to estimate the capacity of the two-user X channel from below and above with only a constant gap between the bounds.
\end{remark}

 Notice that the fundamental set $\cB_n(Q,\kappa') $ that underpins the  probabilistic approach  is dependent on $Q$.  Thus, as $Q$  varies, so does the random choice of channel coefficients that achieve \eqref{eqn55}.   As we shall see in the  next section, this can be problematic.

\subsection{The Khintchine-Groshev theorem and Degrees of Freedom}  \label{sv2.4}

The probabilistic approach of \S\ref{probasn}, relies on the  point $\bm\xi$ associated with the channel coefficients via  \eqref{eqn44}  being in the set $\cB_n(Q,\kappa') $.   Now, however large the probability of the latter (a lower bound is given by \eqref{eqn54}), it can be verified that
\begin{equation}\label{eqn56}
{\bf Prob}(\cB_n(Q,\kappa'))\le 1- \omega\kappa' \ ,
\end{equation}
where $\omega > 0$ is a constant depending only  on $n$. The proof of this can be obtained by utilizing the notion of ubiquity; in particular,  exploiting the ideas used in establishing  Proposition~4 in \cite[Section~12.1]{MR2184760}. Moreover, for any $\kappa'>0$ and any infinite subset $\cQ\subset\N$ the probability that $\bm\xi$ lies in $\cB_n(Q,\kappa')$ for all sufficiently large $Q \in \cQ$ (let alone all sufficiently large $Q$ in $\N$) is zero. This is a fairly straightforward consequence of Theorem~\ref{thm3} and \cite[Lemma~4]{MR2457266}.
This is an unfortunate downside  of the probabilistic approach, especially when it comes to estimating the so called {\em Degrees of Freedom} (DoF)  of communication channels. Indeed,  when estimating the DoF it is desirable to  achieve, with probability one, close to optimal  bounds on the minimal distances ($\dminVBone$ and $\dminVBtwo$ within the context of Example 2)  for all sufficiently large $Q$.
Of course, the badly approximable approach described in  \S\ref{BAD20} does this in the sense that it yields \eqref{eqn55} for all large $Q$ whenever $\bm\xi\in\Bad(2)$. However, as already discussed in Remark \ref{mummie}, the downside of the badly approximable approach is that the probability of hitting $ \Bad(2) $  is zero.   In this section we describe another approach which overcomes the inadequacies of both the probabilistic and badly approximable approaches.  It gives an `$\ve$-weaker'  estimate for the minimal distance but  as we shall soon see it is more than adequate for estimating the DoF.  The key is to make use of the fundamental Khintchine-Groshev theorem in metric Diophantine approximation and this is what we first  describe.

Given a function $\psi:\Rp \to\Rp$, where $\Rp$ denotes  the set of non-negative real numbers, let
\begin{equation}\label{Wn}
\cW_{n}(\psi):=\left\{ \bm\xi \in \II^n :  \begin{array}{l}
|q_1\xi_1+\dots+q_n\xi_n+p|<\psi(|\vvv q|)\\[1ex] \text{for i.m. }(p,\vvv q) \in \Z \times \Z^{n}\backslash \{\vvv 0\}
                                                     \end{array}
\right\}.
\end{equation}
Here and elsewhere, `i.m.' is short for `infinitely many' and
given a subset $X$ in $\R^n$, we will write $|X|_{n}$ for its $n$--dimensional Lebesgue measure.  For obvious reasons, points in $\cW_{n}(\psi)$ are referred to as $\psi$-approximable. When $n=1$,  it is easily seen that $\cW(\psi):=\cW_{1}(\psi)$ is the set of $\xi =\xi_{1} \in \II$ such that
$$
\left|\xi-\frac pq\right|<\frac{\psi(q)}{q}
$$
has infinitely many solutions $(p,q)\in\Z\times\N$. Investigating  the measure theoretic properties of $\cW(\psi)$ was the subject of the pioneering work of Khintchine \cite{MR1512207} almost a century ago.  The following generalisation of Khintchine's theorem is a special case of a result of Groshev \cite{Groshev} concerning systems of linear form (see Theorem~\ref{KGT2} in \S\ref{FR}).  In the one-dimensional case,  it provides a quantitative analysis of the density of the rationals  in the reals.

\medskip

\begin{theorem}[Khintchine-Groshev for one linear form]\label{KGT}
Let $\psi:\Rp\to\Rp$ be a  monotonic function. Then
$$
|\cW_{n}(\psi)|_{n}=\left\{\begin{array}{cl}
0 &\text{if } \ \sum_{q=1}^\infty q^{n-1}\psi(q)<\infty\,,\\[2ex]
1 &\text{if } \ \sum_{q=1}^\infty q^{n-1}\psi(q)=\infty\,.
                       \end{array}
\right.
$$
\end{theorem}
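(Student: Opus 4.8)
The plan is to prove the Khintchine--Groshev theorem for one linear form in $\R^n$ via the classical Borel--Cantelli strategy, treating the convergence and divergence cases separately. Throughout, for $\vvv q\in\Z^n\setminus\{\vvv 0\}$ fixed, the set of $\bm\xi\in\II^n$ for which $|q_1\xi_1+\dots+q_n\xi_n+p|<\psi(|\vvv q|)$ holds for some $p\in\Z$ is a union of hyperplane-neighbourhoods (slabs), and one computes directly that its $n$-dimensional measure is $\asymp \psi(|\vvv q|)$, uniformly in $\vvv q$ (this is exactly the elementary slab computation alluded to in \S\ref{probasn}, cf.\ \cite[Lemma~8]{MR548467}). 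Summing over the $\asymp r^{n-1}$ integer vectors $\vvv q$ with $|\vvv q|=r$ shows that the ``local'' measure at level $r$ is $\asymp r^{n-1}\psi(r)$, which already explains the shape of the series $\sum_q q^{n-1}\psi(q)$.

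For the convergence case, I would let $A_r$ be the set of $\bm\xi\in\II^n$ for which the defining inequality holds for some $(p,\vvv q)$ with $|\vvv q|=r$; then $|A_r|_n \ll r^{n-1}\psi(r)$ by the slab count above, and $\cW_n(\psi)=\limsup_{r\to\infty}A_r$. Since $\sum_r r^{n-1}\psi(r)<\infty$ (here monotonicity of $\psi$ is not even needed), the first Borel--Cantelli lemma gives $|\cW_n(\psi)|_n=0$ immediately. This half is routine.

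The divergence case is the substantive part and I expect it to be the main obstacle. One cannot apply the second Borel--Cantelli lemma naively because the events $A_r$ are far from independent (overlaps between slabs coming from different $\vvv q$, and especially from $\vvv q$ and its integer multiples, are significant). The standard remedies are: either (i) establish a quasi-independence on average estimate of the form $\sum_{r,s\le N}|A_r\cap A_s|_n \ll \big(\sum_{r\le N}|A_r|_n\big)^2$ and invoke the Erd\H{o}s--Chung--type divergence Borel--Cantelli lemma to conclude $|\cW_n(\psi)|_n>0$, and then upgrade to full measure by a zero--one law (ergodicity of the relevant toral map, or the inheritance/invariance argument); or (ii) use a \emph{ubiquity} framework---the rationals/rational hyperplanes $\{p/\vvv q\}$ form a ubiquitous system with the right exponent, and monotonicity of $\psi$ lets one pass from the ubiquity statement to the full-measure conclusion for $\cW_n(\psi)$. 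I would follow route (i): first reduce, using monotonicity, to $\psi$ with $\psi(r)\le r^{-n}$ and with $\sum r^{n-1}\psi(r)=\infty$; then handle the overlap estimate by separating the ``generic'' pairs $(\vvv q,\vvv s)$ with $\vvv q,\vvv s$ not nearly parallel (where the two slab systems meet transversally and $|A_r\cap A_s|_n\asymp |A_r|_n|A_s|_n$) from the ``aligned'' pairs (where one controls the contribution using the divisor bound $\sum_{d\mid k}1 \ll_\ve k^\ve$ together with monotonicity to absorb the resulting logarithmic/$k^\ve$ losses into the divergent series); finally apply the zero--one law. The delicate points are the careful bookkeeping of the aligned-pair overlaps and ensuring the zero--one law applies in the linear-forms setting; these are exactly the technical ingredients that make the Groshev theorem (stated later as Theorem~\ref{KGT2}) harder than a one-line Borel--Cantelli argument.

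Since this statement is quoted in the paper as a special case of Groshev's theorem, an acceptable alternative is simply to derive it from Theorem~\ref{KGT2} in \S\ref{FR}; but the self-contained route above is the one I would sketch, with the divergence-case overlap estimate flagged as the crux.
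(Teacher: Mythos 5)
You should first note that the paper does not actually prove Theorem~\ref{KGT}: it imports it from the literature, stating it as a special case of Groshev's theorem for systems of linear forms (Theorem~\ref{KGT2} in \S\ref{FR}, itself quoted without proof), with a remark recording that the convergence half is a simple Borel--Cantelli application and that the divergence half goes back to Groshev (originally under the stronger hypothesis that $q^n\psi(q)$ is monotonic, later relaxed/removed in \cite{MR118711,MR2576284}). So the fallback you offer in your last sentence --- derive the statement from Theorem~\ref{KGT2} --- is in effect exactly what the paper does, and your convergence argument (slab measure $\asymp\psi(|\vvv q|)$ per $\vvv q$, about $r^{n-1}$ vectors with $|\vvv q|=r$, first Borel--Cantelli, no monotonicity needed) matches the paper's remark and is complete in outline.

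Viewed as a self-contained proof, however, your divergence half has a genuine gap: it is a plan rather than an argument. The quasi-independence-on-average estimate is named but not established, and that is where essentially all of the difficulty of the theorem lives. In particular, the assertion that for ``non-parallel'' $\vvv q,\vvv s$ the slab systems satisfy $|A_r\cap A_s|_n\asymp|A_r|_n|A_s|_n$ is not automatic (the correct statement is an upper bound of the right order after summing, and the problematic contributions come precisely from proportional and near-proportional pairs and from $\vvv q$ and its multiples, where your appeal to the divisor bound is a gesture rather than a computation); you would also need to verify that a zero--one law is actually available in the linear-forms setting (such a law exists, cf.\ \cite{MR2457266}, but it is an ingredient to be invoked precisely, not assumed), or else run a full ubiquity argument as in \cite{MR2184760}, where monotonicity of $\psi$ enters. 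You correctly flag this as the crux, but flagging it does not discharge it; as written, the divergence case is not proved. Since the paper itself only cites the result, the cleanest accurate position is your alternative route: deduce Theorem~\ref{KGT} as the $m=1$ case of Theorem~\ref{KGT2} and attribute the latter to \cite{Groshev} and its refinements.
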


\medskip

\begin{remark}
The convergence case of Theorem~\ref{KGT} is a relatively simple application of the Borel--Cantelli Lemma from probability theory  and it holds for arbitrary functions $\psi$. In  the divergence case,  the  theorem was first obtained by Groshev  under the stronger assumption that $q^n\psi(q)$ is  monotonic. In fact, the monotonicity assumption can be completely removed from the statement of theorem  if $ n\geq 2 $. This is a consequence of Schmidt's paper \cite[Theorem~2]{MR118711} from the swinging  sixties  if $  n \geq 3 $ and the relatively recent paper \cite{MR2576284} covers the $n=2$ case.  In
1941, Duffin $\&$ Schaeffer \cite{MR4859} constructed a
non-monotonic approximating function $\psi$ for which the sum
$\sum_q \psi(q)$ diverges but $|W(\psi)|=0$. Thus, the monotonicity assumption cannot be removed in dimension one.   For completeness, we mention that in the same paper Duffin $\&$ Schaeffer formulated an alternative statement for arbitrary functions. This soon became known as the notorious Duffin-Schaeffer Conjecture and it remained unsolved for almost eighty years  until the breakthrough work of  Koukoulopoulos $\&$ Maynard  \cite{Kouk-May}.
\end{remark}

An immediate consequence of the convergence case of Theorem~\ref{KGT} is the following statement.

\begin{corollary}\label{cor3}
Let  $\psi:\Rp\to\Rp$ be a function such that
\begin{equation}\label{eqn57}
  \sum_{q=1}^\infty q^{n-1}\psi(q)<\infty\,.
\end{equation}
Then, for almost all $\bm\xi \in \II^n$ there exists a constant $\kappa (\bm\xi)  >0 $  such that
\begin{equation}\label{eqn58}
 |q_1\xi_1+\dots+q_n\xi_n+p | \ >  \  \kappa(\bm\xi)  \,   \psi(|\vvv q|)  \qquad   \forall  \    (p,\vvv q) \in \Z \times \Z^{n}\backslash \{\vvv 0\} \,.
\end{equation}
\end{corollary}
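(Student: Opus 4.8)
The plan is to deduce Corollary~\ref{cor3} from the convergence half of Theorem~\ref{KGT} by a standard decomposition of the exceptional set into countably many pieces. First I would fix the approximating function $\psi$ satisfying \eqref{eqn57}. For each integer $m\ge1$ consider the scaled function $\psi_m:=\tfrac1m\psi$. Since $\sum_q q^{n-1}\psi_m(q)=\tfrac1m\sum_q q^{n-1}\psi(q)<\infty$, the convergence case of Theorem~\ref{KGT} gives $|\cW_n(\psi_m)|_n=0$ for every $m$. Consequently the countable union $\bigcup_{m\ge1}\cW_n(\psi_m)$ is still a null set, and I claim its complement in $\II^n$ is exactly the set of $\bm\xi$ for which a constant $\kappa(\bm\xi)>0$ as in \eqref{eqn58} exists.

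The key step is to verify this claim. If $\bm\xi\notin\bigcup_m\cW_n(\psi_m)$, then in particular $\bm\xi\notin\cW_n(\psi_m)$ for some fixed $m$; by definition of $\cW_n$ this means the inequality $|q_1\xi_1+\dots+q_n\xi_n+p|<\tfrac1m\psi(|\vvv q|)$ has only finitely many solutions $(p,\vvv q)\in\Z\times\Z^n\setminus\{\vvv0\}$. Call them $(p^{(1)},\vvv q^{(1)}),\dots,(p^{(N)},\vvv q^{(N)})$. For each such solution the quantity $|q^{(i)}_1\xi_1+\dots+q^{(i)}_n\xi_n+p^{(i)}|$ is a strictly positive number — here I would note that it cannot vanish, since $\bm\xi\in\II^n=(0,1)^n$ has irrational... actually more carefully, if it vanished for some $(p,\vvv q)$ with $\vvv q\ne\vvv0$ then $\bm\xi$ would lie on a rational hyperplane, and I should simply discard that measure-zero set of $\bm\xi$ at the outset, enlarging the null exceptional set accordingly. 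Hence the minimum of these finitely many positive values, together with $\tfrac1m$, furnishes a constant $c>0$ with $|q_1\xi_1+\dots+q_n\xi_n+p|\ge c\,\psi(|\vvv q|)$ for every nonzero $(p,\vvv q)$: indeed for $(p,\vvv q)$ not among the finite list the inequality $\ge\tfrac1m\psi(|\vvv q|)$ holds by assumption, while for the finitely many exceptional $(p,\vvv q)$ one uses the explicit lower bound on that finite set (dividing by the corresponding finitely many values of $\psi(|\vvv q^{(i)}|)$). Taking $\kappa(\bm\xi):=c$ gives \eqref{eqn58}.

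I would then remark on the (routine) converse direction for completeness — if such a $\kappa(\bm\xi)$ exists then choosing $m>1/\kappa(\bm\xi)$ shows $\bm\xi\notin\cW_n(\psi_m)$, so $\bm\xi$ lies outside $\bigcup_m\cW_n(\psi_m)$ — which confirms the decomposition is sharp, though only the direction above is needed for the statement. Putting it together: the set of $\bm\xi\in\II^n$ failing \eqref{eqn58} is contained in $\big(\bigcup_{m\ge1}\cW_n(\psi_m)\big)\cup\{\bm\xi:\bm\xi\text{ lies on a rational hyperplane}\}$, a countable union of null sets, hence null; so \eqref{eqn58} holds for almost every $\bm\xi\in\II^n$.

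The only mild obstacle is the bookkeeping around points on rational hyperplanes, where the linear form can be exactly zero and no multiplicative constant can rescue the inequality; this is handled trivially by excising that null set at the start. Everything else is a direct packaging of the convergence Borel--Cantelli statement already granted by Theorem~\ref{KGT}, together with the elementary observation that a strictly positive function on a finite set has a positive lower bound.
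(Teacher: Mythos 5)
Your argument is correct and is precisely the routine deduction the paper intends when it presents Corollary~\ref{cor3} as an immediate consequence of the convergence case of Theorem~\ref{KGT}: remove the null set $\cW_n(\psi)$ together with the (null) union of rational hyperplanes, and extract $\kappa(\bm\xi)$ from the finitely many remaining solutions. Your countable union over the scaled functions $\psi_m=\tfrac1m\psi$ is harmless but superfluous, since $\cW_n(\psi_m)\subset\cW_n(\psi)$ already makes the single null set $\cW_n(\psi)$ suffice (and one should shrink the final constant slightly, e.g.\ halve it, to get the strict inequality in \eqref{eqn58}).
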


\

Now consider the special case when $\psi: q \to q^{-n-\ve}$ for some $\ve>0$. Then Corollary~\ref{cor3} implies that for almost all  $\bm\xi\in\II^n$ there exists a constant $\kappa(\bm\xi)>0$ such that
$$
\left|q_1\xi_1+\dots+q_n\xi_n+p\right|\ge\frac{\kappa(\bm\xi)}{|\vvv q|^{n+\ve}}
$$
for all  $ (p,\vvv q) \in \Z \times \Z^{n}\backslash \{\vvv 0\}$. In particular, for almost all $\bm\xi\in\II^n$ and every $Q\in\N$ we have that
\begin{equation}\label{eqn59}
\left|q_1\xi_1+\dots+q_n\xi_n+p\right|\ge\frac{\kappa(\bm\xi)}{Q^{n+\ve}}
\end{equation}
for all $ (p,\vvv q) \in \Z \times \Z^{n}\backslash \{\vvv 0\}$ with $1\le|\vvv q|\le Q$.  Now in the same way if
$\bm\xi$  given  by \eqref{eqn44} is badly approximable leads to the minimal distance estimate  \eqref{eqn45}, the upshot of \eqref{eqn59} is the following statement: with probability one, for every $Q \in \N$ and  a random choice of channel coefficients $h_{ij}$  $(i,j=1,2)$,  the separation between the associated  points $y_1$ given by \eqref{eqn30}  is at least $\kappa(\bm\xi) C_2/ Q^{2 + \ve} $ and so
\begin{equation}\label{eqn60}
\dminVBone\ge     \frac{\kappa(\bm\xi)C_2}{Q^{2+ \ve}}  \, .
\end{equation}
Just to clarify,  that  $\bm\xi$ in the above  corresponds to the point given by \eqref{eqn44} associated with  the choice of the channel coefficients.  Note that instead of \eqref{eqn44},  one can equivalently consider $\bm\xi$  to be  either of the points given by \eqref{eqn46} and  this would lead to  \eqref{eqn60} with $C_2$ defined by \eqref{eqn47}.
A similar lower bound statement  holds for the minimal distance $\dminVBtwo $  associated  with the points $y_2$ given by \eqref{eqn31}. Of course,  in this case   $\bm\xi$  need to be replaced by $\bm\xi'$ given by \eqref{eqn48} or equivalently \eqref{eqn49}.

\begin{remark}\label{kgkg}
Recall that $\bm\xi$ is given by \eqref{eqn44} or \eqref{eqn46} and $\bm\xi'$ is given by \eqref{eqn48} or \eqref{eqn49} and they are dependent via \eqref{eqn50} and \eqref{eqn51}. Note that any of the maps in \eqref{eqn51} is a diffeomorphism on a sufficiently small neighborhood of almost every point in $\R^2$. Therefore, if $\bm\xi$ avoids a subset of $\R^2$ of measure zero, then so does $\bm\xi'$. Thus, \eqref{eqn60} and  an analogous  bound for $\dminVBtwo$ are simultaneously valid for almost all choices of the channel coefficients.
\end{remark}

\begin{remark} \label{whynotlog}
Note that in the above analysis, if we had worked with the function $\psi: q \to q^{-n} (\log q)^{-1-\ve}$ for some $\ve>0$, we would have obtained the stronger estimate
\begin{equation*}\label{eq2020svsv}
\dminVBone\ge     \frac{\kappa(\bm\xi)C_2}{Q^{2} (\log Q)^{1+ \ve}}  \, .
\end{equation*}
It will be soon be clear that \eqref{eqn60} is all we need  for estimating the DoF  within the context of Example~2.
\end{remark}

A natural question arising from the above discussion is: {\em can the constant $\kappa(\bm\xi)$ within Corollary~\ref{cor3} and thus \eqref{eqn60} be made independent of $\bm\xi$?} Unfortunately, this is impossible to guarantee with probability one; that is, for almost all $\bm\xi \in \II^n$. To see this, consider the set

\begin{equation} \label{eqn61}
\mathcal{B}_{n}(\psi,\kappa):=\left\{ \bm\xi \in \II^n :  \begin{array}{l}
|q_1\xi_1+\dots+q_n\xi_n+p|> \kappa\psi(|\vvv q|)\\[1ex] ~ \  \ \forall   \ \  (p,\vvv q) \in \Z \times \Z^{n}\backslash \{\vvv 0\}
                                                     \end{array}
\right\}.
\end{equation}

\noindent Then for any $\kappa$ and $\psi$, observe that $\mathcal{B}_{n}(\psi,\kappa)$ will not contain the region
$$
[-\kappa\psi(|\vvv q|),\kappa\psi(|\vvv q|)]\times\R^{n-1}
$$
when  $\vvv q=(1,0,\dots,0) \in \Z^n$.
This region has positive probability; namely $2 \kappa \psi(1)$), and so the complement (which contains $\mathcal{B}_{n}(\psi,\kappa)$) cannot have probability one.
Nevertheless, the following result  provides not only an explicit dependence on the probability of  $\mathcal{B}_{n}(\psi,\kappa)$ on $\kappa$, but shows that it  can be made arbitrarily close to one upon taking $\kappa$ sufficiently small.

\begin{theorem}[Effective convergence Khintchine-Groshev for one linear form] \label{EKG}
Let  $\psi:\Rp\to\Rp$ be a function such that
\begin{equation*}
  \sum_{q=1}^\infty q^{n-1}\psi(q)<\infty\,.
\end{equation*}
Then, for any $\kappa>0$
$$
{\bf Prob}(\mathcal{B}_{n}(\psi,\kappa)) \, \ge  \,  1-4n\kappa\sum_{q=1}^\infty (2q+1)^{n-1}\psi(q)\,.
$$

\end{theorem}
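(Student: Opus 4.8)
The plan is to bound the probability of the complement of $\mathcal{B}_n(\psi,\kappa)$ by a union bound (the convergence half of Borel--Cantelli, made quantitative) over all relevant integer vectors $(p,\vvv q)$. Write $\mathcal{B}_n(\psi,\kappa)^c \cap \II^n$ as the set of $\bm\xi \in \II^n$ for which there exists at least one $(p,\vvv q) \in \Z\times\Z^n\setminus\{\vvv 0\}$ with $|q_1\xi_1+\dots+q_n\xi_n+p|\le\kappa\psi(|\vvv q|)$. Then
\[
{\bf Prob}\big(\mathcal{B}_n(\psi,\kappa)^c\big) \;\le\; \sum_{\substack{\vvv q \in \Z^n\setminus\{\vvv 0\}}}\ \sum_{p\in\Z}\; \big|\{\bm\xi\in\II^n : |q_1\xi_1+\dots+q_n\xi_n+p|\le\kappa\psi(|\vvv q|)\}\big|_n\,.
\]
First I would show that for each fixed $\vvv q\neq\vvv 0$, the union over $p\in\Z$ of the slabs $\{|\vvv q\cdot\bm\xi+p|\le\kappa\psi(|\vvv q|)\}$ meets $\II^n$ in a set of measure at most $2\kappa\psi(|\vvv q|)$. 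This is the same elementary computation already invoked in \S\ref{probasn} (the slab around each rational hyperplane has $\II^n$-measure governed by the thickness $2\kappa\psi(|\vvv q|)$ divided by the relevant component of $\vvv q$, and summing the parallel slabs telescopes to at most $2\kappa\psi(|\vvv q|)$, exactly as in \cite[Lemma~8]{MR548467}); I would cite that rather than redo it.

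Next I would organise the sum over $\vvv q$ by the value $q=|\vvv q|=\max_j|q_j|$. Since $\psi$ need not be monotonic I cannot pull $\psi$ out carelessly, but the slab bound $2\kappa\psi(|\vvv q|) = 2\kappa\psi(q)$ depends on $\vvv q$ only through $q$, so
\[
{\bf Prob}\big(\mathcal{B}_n(\psi,\kappa)^c\big) \;\le\; \sum_{q=1}^\infty \#\{\vvv q\in\Z^n : |\vvv q|=q\}\cdot 2\kappa\psi(q)\,.
\]
The count of integer vectors with sup-norm exactly $q$ is $(2q+1)^n-(2q-1)^n$. A crude bound $(2q+1)^n-(2q-1)^n \le 2n(2q+1)^{n-1}$ (mean value theorem applied to $t\mapsto(2t-1)^n$ on $[q,q+1]$, or a direct binomial estimate) then yields
\[
{\bf Prob}\big(\mathcal{B}_n(\psi,\kappa)^c\big) \;\le\; 4n\kappa\sum_{q=1}^\infty (2q+1)^{n-1}\psi(q)\,,
\]
and the hypothesis $\sum q^{n-1}\psi(q)<\infty$ guarantees this sum converges, so the stated lower bound on ${\bf Prob}(\mathcal{B}_n(\psi,\kappa))$ follows by taking complements.

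The only genuinely delicate point is the per-$\vvv q$ slab estimate: one must be slightly careful that overlapping or boundary effects near $\partial\II^n$ do not spoil the clean bound $2\kappa\psi(q)$, and that when some $q_j=0$ the "thickness in the $\xi_j$ direction" argument is replaced by projecting onto a coordinate where $q_j\neq0$. Both issues are handled exactly as in the cited lemma, so I would simply reference it; everything else is routine union-bound bookkeeping and a combinatorial count of lattice points on a sup-norm sphere.
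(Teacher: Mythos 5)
Your proposal is correct and follows essentially the same route as the paper's own proof: a union bound over $\vvv q\in\Z^n\setminus\{\vvv 0\}$, the per-$\vvv q$ slab measure $2\kappa\psi(|\vvv q|)$ taken from \cite[Lemma~8]{MR548467}, and the count of $\vvv q$ with $|\vvv q|=q$ bounded by $2n(2q+1)^{n-1}$. The only cosmetic difference is your initial double sum over $p$ and $\vvv q$, which you immediately replace by the measure of the union over $p$, exactly as in the paper.
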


\begin{proof} Note  that
$$
\mathcal{B}_{n}(\psi,\kappa)=\II^n\setminus \bigcup_{\vvv q\in\Z^n \backslash \{ \vvv 0 \} }E_{\vvv q}(\psi)\,,
$$
where
$$
E_{\vvv q}:=\big\{\bm\xi\in\II^n:|q_1\xi_1+\dots+q_n\xi_n+p| \,  \le \, \kappa\psi(|\vvv q|)\text{ for some } p\in\Z\big\}\,.
$$

\noindent Now, it is  not difficult to verify that $|E_{\vvv q}|_{n}=2\kappa\psi(|\vvv q|)$ - see \cite[Lemma~8]{MR548467} for details.  Thus, it follows that
\begin{align*}
{\bf Prob}(\mathcal{B}_{n}(\psi,\kappa)):=|\mathcal{B}_{n}(\psi,\kappa)|_{n}
\ & \ \ge  \ 1-\sum_{\vvv q\in\Z^n \backslash \{ \vvv 0 \}  }|E_{\vvv q}|_{n}\\[0ex]
\ & \ = \ 1-\sum_{\vvv q\in\Z^n \backslash \{ \vvv 0 \} }2\kappa\psi(|\vvv q|)\\[0ex]
\ & \ = \ 1-\sum_{q=1}^\infty\sum_{\substack{\vvv q\in\Z^n\\ |\vvv q|=q}}2\kappa\psi(|\vvv q|)\\[0ex]
\ & \ = \ 1-2\kappa\sum_{q=1}^\infty\psi(q)\sum_{\substack{\vvv q\in\Z^n\\ |\vvv q|=q}}1\\[0ex]
\ & \ \ge \ 1-2\kappa\sum_{q=1}^\infty\psi(q)2n(2q+1)^{n-1}\,,
\end{align*}
as desired.
\end{proof}

Having set up the necessary mathematical theory, we now  turn our attention to calculating  the DoF for the two-user $X$-channel considered in Example~2.  The advantage of utilising the Khintchine-Groshev approach rather than the badly approximable approach, is that the value we obtain  is not only sharp but it is valid  for almost every realisation of the four channel coefficients  $h_{ij}$  $(i,j=1,2)$.  Here, almost every is naturally with respect to $4$-dimensional Lebesgue measure.  At this point, a  mathematician with little or no background in communication theory (like us) may rightly be crying out for an explanation of what is meant by the Degrees of Freedom of communication channels.  We will attempt to provide a basic and in part a  heuristic  explanation within the context of Example~2.  For a more in depth and general discussion we refer the reader to Chapter~\chaptwo.

The simplest  example of a communication channel is one involving just one transmitter and one receiver.  For obvious reasons,  such a setup is referred to as a {\em point to point channel}.   The DoF of any other communication channel model is in essence a measure of its efficiency compared with using multiple point to point channels. In making any comparison, it is paramount to compare like with like.
Thus,  given  that the noise $z_i$  ($i=1,2$) at both receivers $R_i$ within Example~2 is assumed to have  normal distribution $\cN(0,1)$,  we assume that the noise within the benchmark point to  point channel has normal distribution $\cN(0,1)$.  In the same vein,  we assume that  the messages the users transmit within both models are integers lying in $\{0, \ldots, Q\}$; that is to say that $Q$ is the same in  Example~2 and the point to  point channel model.  The parameter $Q \in \N$ is obviously a bound on the message size and it  provides a  bound on the number of binary digits ({\em bits})  that can be transmitted instantaneously as a single bundle. Indeed, sending the integer $Q$ requires transmitting a bundle of
$\lfloor \log Q \rfloor+1\approx \log Q$ bits, where the logarithm is to the base 2.  Loosely speaking, the larger the message to be sent  the larger the ``power'' required to transmit the message  (transmitting  instantaneously more bits requires more energy).  Thus a bound on the message size $Q$ corresponds to imposing  a {\em power constraint} $P$ on the channel  model under consideration. For physical reasons, that are not particularly relevant to the discussion here, the power is comparable to the square of the message size.  The upshot is that a  power constraint $P$ on the channel model places a bound on the maximal number of bits  that can be reliably transmitted as a single bundle. With this in mind, the (total) DoF of the channel characterises the number (possibly fractional) of simple point-to-point channels, needed to reliably transmit the same maximal number of  bits as the power constraint $P$ tends to infinity.  We now calculate the total DoF  for the concrete setup of Example~2. The exposition given below is a simplified version of that presented in \cite{MR3245356}.

 In relation to Example 2, the power constraint $P$ means that
\begin{equation}\label{eqn62}
\text{$|x_1|^2\le P$ and $|x_2|^2\le P$}  \, ,
\end{equation}
where $ x_1$ and $x_2$ are the codewords transmitted by $S_1$ and $S_2$  as given by \eqref{eqn28}.   Now notice that since the messages $u_1, u_2, v_1, v_2$ are integers lying in $\{0,\dots,Q\}$, it follows that $P$ is comparable to $(\lambda Q)^2$  -- the channel coefficients $h_{ij}$ are fixed. Recall, that $ \lambda \ge 1 $ is a scaling factor which is at our disposal and this will be utilized shortly.
It is shown in \cite{MR3245356}, that the probability of error in transmission within Example~2 is bounded above by
\begin{equation}\label{eqn63}
\exp\left(-\frac{\dminVB^2}{8}\right)\, ,
\end{equation}
where
$$\dminVB=\min\{\dminVBone,\dminVBtwo\}.$$

\noindent It is a standard requirement that this probability should tend to zero as $P\to\infty$.  In essence, this is what it means for the transmission  to be reliable.  Then, on  assuming \eqref{eqn60} -- which holds for almost every realisation of the channel coefficients --  it follows that
 \begin{equation}\label{eqn64}
\dminVB  \gg \frac{\lambda}{Q^{2+\ve}}\,,
\end{equation}
and so the quantity  \eqref{eqn63} will tend to zero as $Q\to\infty$ if we set
$$\lambda=Q^{2+2\ve} \, . $$
The upshot of this is that we will achieve reliable transmission under the power constraint \eqref{eqn62} if we set $P$ to be comparable to $Q^{6 + 4 \ve}$; that is
$$
 Q^{6 + 4 \ve}  \ll P  \ll Q^{6 + 4 \ve} \, .
$$
Now in Example~2, we simultaneously transmit 4 messages, namely $u_1,u_2,v_1,v_2$, which independently take values between $0$ and $Q$. Therefore, in total we transmit approximately $ 4 \times \log Q $   bits, which with our choice of $P$ is an  achievable total rate of reliable transmission; however, it may not be maximal.
We now turn our attention to the simple point to point channel in which the noise has normal distribution $\cN(0,1)$.   In his pioneering work during the forties, Shannon \cite{MR28549} showed that such  a channel subject to the power constraint $P$ achieves the maximal rate of reliable transmission $\frac12\log (1+P)$ -- for further details see \chaptwosec{}
Chapter~\chaptwo.
On comparing the  above rates of reliable transmission for the two models under the same power constraint,    we get that the total DoF of the two-user $X$-channel described in Example 2 is at least
\begin{equation}\label{eqn65}
\lim_{P\to\infty}\frac{4\log Q}{\frac12\log(1+P)}=\lim_{Q\to\infty}\frac{4\log Q}{\frac12\log(1+Q^{6+4\ve})}=\frac{4}{3+2\ve}.
\end{equation}
Given that $\ve>0$ is arbitrary, it follows that for almost every realisation of the channel coefficients
  $$
  {\rm DoF} \ge \frac43 \, .
  $$
Now it was shown in \cite{MR2446746} that the total DoF of a two-user $X$-channel is upper bounded by $4/3$  for all choices of the channel coefficients, and so it follows that for almost every realisation of the channel coefficients
\begin{equation}\label{eqn66}
{\rm DoF}=\frac43\,.
\end{equation}

\noindent For ease of reference we formally state these findings,  the full details of which can be found in \cite{MR3245356},  as a theorem.

\begin{theorem}  \label{MotDOF}
 For almost every realisation of the four channel coefficients $h_{ij}$  $(i,j=1,2)$, the total DoF of the two-user $X$-channel  is $\frac43$.
\end{theorem}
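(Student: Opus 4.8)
The plan is to prove the two matching estimates $\mathrm{DoF}\le\tfrac43$ and $\mathrm{DoF}\ge\tfrac43$, the first for every choice of the channel coefficients and the second for almost every choice, and then combine them. The upper bound is not ours to establish: it is the theorem of \cite{MR2446746} that the total DoF of any two-user $X$-channel is at most $4/3$ regardless of the $h_{ij}$. So the entire content is the almost-everywhere lower bound, and for this the strategy is to feed the minimal-distance estimate \eqref{eqn60} into the reliability/power-constraint computation that produced \eqref{eqn65}.

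For the lower bound I would first apply Corollary~\ref{cor3} (the convergence case of Khintchine--Groshev) in dimension $n=2$ with the approximating function $\psi\colon q\mapsto q^{-2-\ve}$; the relevant series is $\sum_q q^{1}\cdot q^{-2-\ve}=\sum_q q^{-1-\ve}$, which converges for every $\ve>0$. This yields a full-measure set of $\bm\xi\in\II^2$ carrying a constant $\kappa(\bm\xi)>0$ with $|q_1\xi_1+q_2\xi_2+p|\ge\kappa(\bm\xi)|\vvv q|^{-2-\ve}$ for all non-zero $(p,\vvv q)$, hence \eqref{eqn59} and therefore $\dminVBone\ge\kappa(\bm\xi)C_2Q^{-2-\ve}$ for \emph{every} $Q\in\N$. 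Since the $X$-channel has two receivers I need the analogous bound for $\dminVBtwo$, i.e.\ the point $\bm\xi'$ of \eqref{eqn48} (equivalently \eqref{eqn49}) must also avoid the relevant null set. Here I would invoke Remark~\ref{kgkg}: $\bm\xi'=\bm f(\bm\xi)$ for one of the maps in \eqref{eqn51}, each a diffeomorphism near almost every point, so the $\bm f$-preimage of a null set is null; intersecting the two null sets in the $(h_{ij})$-parameter space still leaves a full-measure set of channel coefficients on which $\dminVBone$ and $\dminVBtwo$ are $\gg_{\bm\xi}Q^{-2-\ve}$ simultaneously.

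Next I would run the power-constraint bookkeeping exactly as in the paragraph preceding the statement. With $\dminVB=\min\{\dminVBone,\dminVBtwo\}$, the probability of error \eqref{eqn63} equals $\exp(-\dminVB^2/8)$ by \cite{MR3245356}, and $P\asymp(\lambda Q)^2$. Choosing the scaling factor $\lambda=Q^{2+2\ve}$ forces $\dminVB\gg Q^{\ve}\to\infty$ by \eqref{eqn64}, so the error probability tends to $0$, and the power constraint reads $P\asymp Q^{6+4\ve}$. Transmitting the four independent messages $u_1,u_2,v_1,v_2\in\{0,\dots,Q\}$ sends $\approx 4\log Q$ bits reliably, while the benchmark point-to-point $\cN(0,1)$ channel under the same power constraint has Shannon capacity $\tfrac12\log(1+P)$ \cite{MR28549}. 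Comparing as in \eqref{eqn65} gives $\mathrm{DoF}\ge\lim_{Q\to\infty}\frac{4\log Q}{\frac12\log(1+Q^{6+4\ve})}=\frac{4}{3+2\ve}$ for every $\ve>0$, on a full-measure set of channel coefficients. Letting $\ve\to0$ along a countable sequence (so the exceptional sets still union to a null set) yields $\mathrm{DoF}\ge\tfrac43$ almost everywhere, and combining with \cite{MR2446746} gives $\mathrm{DoF}=\tfrac43$ almost everywhere, which is \eqref{eqn66}.

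The step I expect to be the main obstacle is the bookkeeping of exceptional sets: one must verify that a single full-measure set of channel coefficients can be taken to work \emph{simultaneously} for both receivers and for all countably many values of $\ve$ used in the limit, and that the reliability bound \eqref{eqn63} is genuinely valid in the regime invoked. The first point is handled by the diffeomorphism observation together with a countable union; the second is imported wholesale from \cite{MR3245356}. Everything else is the routine substitution already carried out in the text. (If a quantitative, non-asymptotic version were wanted, the effective statement Theorem~\ref{EKG} could replace Corollary~\ref{cor3}, but it is not needed for the value of the DoF.)
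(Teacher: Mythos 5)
Your proposal is correct and follows essentially the same route as the paper: the convergence case of Khintchine--Groshev with $\psi(q)=q^{-2-\ve}$ giving \eqref{eqn60}, the diffeomorphism observation of Remark~\ref{kgkg} to handle both receivers on a single full-measure set, the $\lambda=Q^{2+2\ve}$ power-constraint computation leading to \eqref{eqn65}, and the upper bound imported from \cite{MR2446746}. Your extra care with a countable sequence of $\ve$'s is a harmless tightening of what the paper leaves implicit.
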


\bigskip

\begin{remark} \label{reiterate}
We reiterate  that by  utilising the Khintchine-Groshev approach rather than the badly approximable approach (i.e. exploiting the lower bound \eqref{eqn60} instead of  \eqref{eqn45} or equivalently \eqref{eqn55} for the minimal distance), we obtain  \eqref{eqn66} for the DoF  that is  valid  for almost every realisation of the four channel coefficients  $h_{ij}$  $(i,j=1,2)$ rather than on a set of $4$-dimensional Lebesgue measure zero.  In \S\ref{singSV},  we shall go further and show that any  exceptional set of channel coefficients for which \eqref{eqn66} fails  is a subset arising from the notion of jointly singular points. This  subset is then shown (see Theorem~\ref{prob3})  not only to have measure zero but to have dimension strictly less than $4$ -- the dimension of the space occupied by the channel coefficients.  In short, our improvement of  Theorem~\ref{MotDOF} is given by Theorem~\ref{MotDOF2}.
\end{remark}

\subsection{Dirichlet improvable and non-improvable points: achieving optimal separation}   \label{eyesight}

We now show that there are special values of $Q$ for which  the minimal distance $\dminVBone$ satisfies \eqref{eqn55}  with  $\kappa'$ as close to one as desired. Recall, the larger the minimal distance   the more tolerance we have for noise. The key is to exploit the (abundant) existence of points for which Dirichlet's theorem cannot be improved.

\begin{definition}[Dirichlet improvable and non-improvable points]  \label{DIDEF}
Let $0<\kappa'<1$. A point $\bm\xi\in\R^n$ is said to be {\em $\kappa'$-Dirichlet improvable} if for all sufficiently large $Q\in\N$ there are integer points $(p,\vvv q)\in \Z \times \Z^n$ with $1\le |\vvv q|\le Q$ such that
  \begin{equation}\label{eqn67}
    \left|q_1\xi_1+\dots+q_n\xi_n+p\right| < \kappa'Q^{-n}\,.
  \end{equation}
A point $\bm\xi\in\R^n$  is said to be {\em Dirichlet non-improvable} if for any $\kappa'<1$ it is not $\kappa'$-Dirichlet improvable. Thus, explicitly, $\bm\xi\in\R^n$ is {\em Dirichlet non-improvable} if for any $0<\kappa'<1$ there exists arbitrarily large $Q\in\N$ such that for all integer points $(p,\vvv q)\in \Z \times \Z^n$ with $1\le |\vvv q|\le Q$
  \begin{equation}\label{eqn68}
    \left|q_1\xi_1+\dots+q_n\xi_n+p\right| \ge \kappa'Q^{-n}\,.
  \end{equation}
\end{definition}

\vspace*{2ex}

\begin{remark} Note that  Dirichlet non-improvable points are not the same as those considered in the probabilistic approach of \S\ref{probasn}.  There  the emphasis is on both $\kappa'$ and  $Q$ being uniform.
\end{remark}

\vspace*{2ex}
In a follow-up paper \cite{MR279040} to their one-dimensional work cited in \S\ref{Improve1}, Davenport $\&$ Schmidt  showed that Dirichlet improvable points in $\R^n$ form a set  $\DI(n)$  of $n$-dimensional Lebesgue measure zero.
Hence, a randomly picked point in $\R^n$ is Dirichlet non-improvable with probability one.  The upshot of this is the following consequence: for almost every random choice of the four channel coefficients  $h_{ij}$  $(i,j=1,2)$  and for any $\ve>0$ there exist arbitrarily large integers $Q$ such that the minimal  distance $\dminVBone$  between the associated points  given by \eqref{eqn30} satisfies
 \begin{equation}\label{eqn69}
\dminVBone\ge \frac{(1-\ve)\lambda h_{11} h_{12}}{Q^{2}} = (1-\ve) \, \frac{C_2}{Q^{2}}\,.
\end{equation}
 To conclude,   the Dirichlet non-improvable  approach allows us to almost surely  achieve the best possible separation, within the factor $(1-\ve)$ of the theoretic upper bound \eqref{eqn39}, for an infinite choice of integer parameters $Q\in \mathcal{Q}_1$.

 \begin{remark} \label{nooverlap}  Obviously, we can obtain an analogous lower bound statement for $\dminVBtwo$ for an infinite choice of integer parameters $Q\in \mathcal{Q}_2$.  However,  it is not guaranteed that the integer sets $ \mathcal{Q}_1 $ and $ \mathcal{Q}_2 $  overlap and thus the problem of optimising $\dminVBone$  and $\dminVBtwo$ simultaneously remains open.
 \end{remark}

\subsection{Singular and non-singular points: the DoF of $X$-channel revisited} \label{singSV}

With reference to Example 2, the Khintchine-Groshev and the Dirichlet non-improvable  approaches  allows us to achieve good separation for the minimal distances  (i.e., lower  bounds for $\dminVBone$ and $\dminVBtwo$ that are at most `$\ve$-weaker' than the theoretic upper bounds) for almost all choices of the four channel coefficients $h_{ij}$ $ (i,j =1,2)$. We now turn to the question of {\em whether good separation can be achieved for a larger class of channel coefficients?  For example, is it possible that the set of exceptions not only has measure zero (as is the case with the aforementioned approaches) but has dimension strictly less than four (the  dimension of the space occupied by the channel coefficients)?}   In short the answer is yes. The key is to make use of the following weaker notion than that of  Dirichlet non-improvable points (cf. Definition~\ref{DIDEF}).

\begin{definition}[Singular and non-singular points]\label{def5}
A point $\bm\xi\in\R^n$ is said to be {\em singular} if it is $\kappa'$-Dirichlet improvable for any $\kappa'>0$. A point  $\bm\xi\in\R^n$  is said to be {\em non-singular} (or {\em regular}) if it is not singular. Thus, explicitly, $\bm\xi\in\R^n$ is {\em non-singular} if there exists a constant $\kappa'=\kappa'(\bm\xi)>0$ such that there exist arbitrarily large integers $Q\in\N$ so that for all integer points  $(p,\vvv q)\in \Z \times \Z^n$ with $1\le |\vvv q|\le Q$
  \begin{equation}\label{eqn70}
    \left|q_1\xi_1+\dots+q_n\xi_n+p\right| \ge \kappa'Q^{-n}\,.
  \end{equation}
\end{definition}

\noindent By definition, any singular point is trivially Dirichlet improvable.  Equivalently,  any Dirichlet non-improvable point is trivially  non-singular.

We let $\Sing(n)$ denote the set of singular points in  $\R^n$.  It is easily verified   that $\Sing(n)$ contains every rational hyperplane in $\R^n$. Therefore,
$$
n-1 \le  \dim \Sing(n) \le n   \, .
$$
Here and throughout,  $\dim X$ will denote the Hausdorff dimension of a
subset $X$ of  $\R^n$.   For the sake of completeness,  we provide the definition.

\begin{definition}[Hausdorff dimension]  \label{DefHD}
Let $X\subset\R^n$. Then the Hausdorff dimension $ \dim X$ of $X$ is defined to be the infimum of $s>0$
such that for any $\rho>0$ and any $\ve>0$ there exists a cover of $X$ by a countable
family $B_i$ of balls of radius $r(B_i)<\rho$ such that
$$
\sum_{i=1}^\infty r(B_i)^s<\ve\,.
$$
\end{definition}

\begin{remark} \label{usefulHD}
For most sets upper bounds for the Hausdorrf dimension can be obtained using natural covering by small balls.  Indeed, let $X\subset\R^n$   and  $\rho>0$ and  suppose $X$ can be covered by $N_\rho(X)$ balls of radius  at most  $\rho$.   Then, it immediately follows for the above definition that
$$
\dim X   \le   \limsup_{\rho \to 0}  \frac{ \log N_\rho(X) }{-\log \rho}   \, .
$$
\end{remark}

\medskip

\noindent Note that the Hausdorff dimension of planes and more generally smooth submanifolds of $\R^n$ is the same as their usual `geometric' dimension.  The  middle third Cantor set $\cK$ is the standard classical example of a set with fractal dimension.  Recall, $\cK$ consists of all real numbers in the unit interval whose base 3 expansion does not contain the `digit' 1; that is
$$
\cK :=  \{ \xi \in [0,1]   : \, \xi = \textstyle{\sum_{i=1}^{\infty} } a_i 3^{-i}   \quad {\rm with \ } a_i = 0 \ {\rm or \ } 2   \} \, .
$$
It is well known that
$$
\dim\cK=\frac{\log2}{\log3}\,.
$$
For a proof of this and a lovely  introduction to the mathematical world of fractals, see the bible \cite{MR1102677}.

Now returning to singular points,  in the case $n=1$, a nifty argument due to
Khintchine \cite{MR1512207}
dating back to the twenties shows that a real number is singular if and only
if it is rational; that is
\begin{equation}\label{eqn71}
\Sing(1) = \Q \, .
 \end{equation}
 Recently, Cheung $\&$ Chevallier \cite{MR3544282}, building on the spectacular $n=2$
 work of Cheung \cite{MR2753601}, have proved the following dimension statement for
 $\Sing(n)$.

\begin{theorem}[Cheung \&{} Chevallier]\label{thm6}
Let $n\ge2$. Then
$$
\dim\Sing(n)=\frac{n^2}{n+1} \, .
$$
Thus,  $$\operatorname{codim}\Sing(n)=\dfrac{n}{n+1}   \, . $$
\end{theorem}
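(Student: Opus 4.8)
The plan is to recast the problem in the language of homogeneous dynamics via the Dani correspondence and then prove the two inequalities by separate methods: an efficient covering argument for the upper bound, and an explicit Cantor-type (template) construction for the lower bound. (The hypothesis $n\ge2$ is essential: for $n=1$ one has $\Sing(1)=\Q$ and no fractal phenomenon.) Concretely, let $X_{n+1}=SL_{n+1}(\R)/SL_{n+1}(\Z)$ be the space of unimodular lattices in $\R^{n+1}$, let $g_t$ be the one-parameter diagonal subgroup that expands the ``linear form'' coordinate and contracts each of the $n$ remaining coordinates at the common rate making $g_t$ volume preserving, and for $\bm\xi\in\R^n$ let $u_{\bm\xi}$ be the unipotent matrix for which the lattice $u_{\bm\xi}\Z^{n+1}$ is precisely $\{(q_1\xi_1+\dots+q_n\xi_n+p,\ \vvv q):(p,\vvv q)\in\Z\times\Z^n\}$. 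A quantitative form of Mahler's compactness criterion together with Definition~\ref{def5} shows that $\bm\xi\in\Sing(n)$ if and only if the forward trajectory $\{g_tu_{\bm\xi}\Z^{n+1}:t\ge0\}$ is \emph{divergent} in $X_{n+1}$, i.e. eventually leaves every compact set. So the theorem becomes a statement about the Hausdorff dimension of the set of starting points of divergent trajectories.

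For the upper bound $\dim\Sing(n)\le n^2/(n+1)$ I would construct an economical cover and invoke Remark~\ref{usefulHD}. Divergence means that for every $\eta>0$, for all large $t$ the lattice $g_tu_{\bm\xi}\Z^{n+1}$ has a nonzero vector of norm $<\eta$; unwinding this yields, along an unbounded sequence of scales $Q$, an integer relation $|q_1\xi_1+\dots+q_n\xi_n+p|<\eta Q^{-n}$ with $1\le|\vvv q|\le Q$. For each fixed admissible $(p,\vvv q)$ the corresponding $\bm\xi$ fill a slab of small thickness; counting how many such slabs are needed, and — crucially — iterating this bound across a judiciously chosen increasing sequence of scales (exploiting that the condition holds for \emph{all} large $t$, not just infinitely often) and optimising over the scale ratios, gives $N_\rho(\Sing(n))\ll\rho^{-n^2/(n+1)-o(1)}$, hence the bound.

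For the lower bound $\dim\Sing(n)\ge n^2/(n+1)$ — which is the heart of the matter — I would follow the strategy of Cheung (for $n=2$) and Cheung $\&$ Chevallier (general $n$). One builds a nested family of boxes in $\R^n$: at stage $k$ a box of side $\delta_k$ in which $\bm\xi$ is already confined to a slab about a rational hyperplane $q_{k,1}\xi_1+\dots+q_{k,n}\xi_n+p_k=0$ with $|\vvv q_k|\approx Q_k$, of thickness $\approx\eta_k/Q_k$ in the normal direction and extent $\approx\delta_k$ in the other $n-1$ directions; one subdivides this slab into many sub-boxes, keeps those along which a fresh relation $\vvv q_{k+1}$ with $|\vvv q_{k+1}|\approx Q_{k+1}$ can be propagated, and iterates with $(Q_k)$ increasing rapidly. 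The rational-hyperplane construction alone yields only dimension $n-1$; the extra branching coming from the freedom in choosing the next integer relation supplies the additional $1/(n+1)$. After balancing the ratios of consecutive sidelengths (the balance assigns relative weight $n/(n+1)$ to the lost ``thin'' direction at each step) the limit Cantor set has Hausdorff dimension arbitrarily close to $n^2/(n+1)$, and a Frostman/mass-distribution measure $\mu$ on it with $\mu(B(x,r))\ll r^s$ for every $s<n^2/(n+1)$ certifies the bound.

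The main obstacle is precisely this last step: one must guarantee genuine divergence of the trajectory — not merely the (far weaker) property that infinitely many relations become small, which would not produce singular vectors at all — and this forces control of the injectivity radius of $g_tu_{\bm\xi}\Z^{n+1}$ for \emph{all} $t$, including the transitional ranges between consecutive stages. Organising this interpolation, via the combinatorics of the flag of intermediate subgroups (the ``templates'' of Cheung--Chevallier), while keeping the stage branching numbers large enough to reach the sharp exponent $n^2/(n+1)$ rather than $n-1$, is the genuinely difficult part; by comparison the covering estimate of the second step, though nontrivial, is routine.
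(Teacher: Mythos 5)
The first thing to say is that the paper does not prove Theorem~\ref{thm6} at all: it is imported from Cheung and Chevallier \cite{MR3544282} (building on Cheung's $n=2$ case \cite{MR2753601}), and is then used as an input for the lower bound in Theorem~\ref{prob5}. So there is no in-paper proof of the lower bound to compare yours against. The only half of the statement the paper effectively establishes itself is the upper bound, namely the $m=1$ case of Theorem~\ref{prob5+}, and there the route is Dani's correspondence combined with the escape-of-mass covering theorem of Kadyrov, Kleinbock, Lindenstrauss and Margulis (Theorem~\ref{thm4}), organised through Proposition~\ref{prop3}. Your upper-bound strategy --- covering $\Sing(n)$ directly by slabs $|q_1\xi_1+\dots+q_n\xi_n+p|<\eta Q^{-n}$ and iterating the count over a rapidly increasing sequence of scales --- is a genuinely different (in fact the original, Cheung--Chevallier style) route. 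It can be made to work, but note that a single scale only gives the trivial bound; the exponent $n^2/(n+1)$ is won precisely in the multi-scale bookkeeping that you describe only as ``judiciously chosen'' and ``optimised'', whereas in the paper's approach all of that is packaged once and for all into the covering estimate of Theorem~\ref{thm4}.

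As for the lower bound, which you rightly identify as the heart of the theorem: what you have written is an accurate description of the Cheung--Chevallier strategy (nested slabs about rational hyperplanes, branching over the choice of the next integer relation, a balance of side lengths, and a mass-distribution argument), and the heuristic arithmetic $n-1+\frac{1}{n+1}=\frac{n^2}{n+1}$ is consistent. But none of the quantitative steps is actually carried out: you do not count the admissible children at each stage, you do not verify that every point of the limit set is genuinely singular in the sense of Definition~\ref{def5} (i.e.\ that the required smallness holds for \emph{all} large $Q$, including the transitional ranges between consecutive stages --- the very issue you flag), and you do not construct the Frostman measure or prove its H\"older estimate. Since these are exactly the places where the construction could degenerate to dimension $n-1$, your proposal is a faithful roadmap of the known proof rather than a proof; making it self-contained would in effect amount to reproducing \cite{MR3544282} (or the variational-principle machinery of \cite{das2019variational}, to which the ``template'' formalism you invoke really belongs, rather than to Cheung--Chevallier).
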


\begin{remark} Note that since $ \frac{n^2}{n+1} > n-1 $,  the theorem
immediately implies that in higher dimensions $ \Sing(n) $
does not simply correspond to rationally dependent $\bm\xi \in \R^n$
as in the one-dimensional case -- the theory is much richer.
Also observe, that since $ \frac{n^2}{n+1} < n $,  the set $\Sing(n)$
is strictly smaller than $\R^n$ in terms of its Hausdorff dimension. How much
smaller is measured by its codimension; {\em i.e.} $n -\dim\Sing(n)$.
\end{remark}

Now  if the four channel coefficients  $h_{ij}$  $(i,j=1,2)$   happen to be such that the corresponding point  $ \bm\xi  \in \R^2$ given by \eqref{eqn44}
is non-singular, then there exist arbitrarily large integers $Q$ such that the minimal  distance $\dminVBone$  between the associated points  given by \eqref{eqn30} satisfies
 \begin{equation}\label{eqn72}
\dminVBone\ge \frac{\kappa'({\bm\xi}) \lambda h_{11} h_{12}}{Q^{2}} = \, \frac{\kappa'({\bm\xi}) \, C_2}{Q^{2}}\,.
\end{equation}
This of course is similar to the statement in which the point  ${\bm\xi}$  is Dirichlet non-improvable with the downside that we cannot replace the constant $\kappa'({\bm\xi}) $ by $(1-\ve)$  as in \eqref{eqn69}.  However, the advantage is that it is valid for a much larger set of  channel coefficients; namely, the exceptional set of channel coefficients $(h_{11},h_{12}, h_{21}, h_{22})\in \Rp^4$ for which  \eqref{eqn72} is not valid has dimension
$\frac{10}{3}$, which is strictly smaller than $4$ -- the dimension of the ambient space occupied by
$(h_{11},h_{12}, h_{21}, h_{22})$. This result seems to be new and we state it formally.

\begin{proposition} \label{ohyes2}
For  all choices of  channel coefficients $(h_{11},h_{12}, h_{21}, h_{22}) \in \Rp^4$, except on a subset of codimension $\frac23$, there exist arbitrarily large integers $Q$ such that the minimal  distance $\dminVBone$  between the associated points  given by \eqref{eqn30} satisfies \eqref{eqn72}.
\end{proposition}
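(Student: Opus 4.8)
The plan is to deduce the proposition from the dimension formula for $\Sing(2)$ provided by Theorem~\ref{thm6}, transported to the space of channel coefficients by the explicit map that produces the point \eqref{eqn44}. Recall from the discussion preceding the proposition that if $\bm\xi=(h_{22}/h_{12},\,h_{21}/h_{11})\in\R^2$ is non-singular, then \eqref{eqn72} holds for arbitrarily large $Q$; this was argued under the normalisation \eqref{eqn36}, and when one of $h_{11}h_{22}$, $h_{21}h_{12}$ realises the maximum in \eqref{eqn36} one works instead with the corresponding equivalent point from \eqref{eqn46}, that is, with the image of $\bm\xi$ under one of the coordinate maps of type \eqref{eqn51} (and $C_2$ understood in the sense of \eqref{eqn47}). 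Taking contrapositives, the exceptional set — the set of $(h_{11},h_{12},h_{21},h_{22})$ for which \eqref{eqn72} fails for all sufficiently large $Q$ — is contained in $\Phi^{-1}(\Sing(2))$, where
$$
\Phi:(0,\infty)^4\longrightarrow(0,\infty)^2,\qquad \Phi(h_{11},h_{12},h_{21},h_{22})=\Big(\tfrac{h_{22}}{h_{12}},\,\tfrac{h_{21}}{h_{11}}\Big).
$$
(Since channel coefficients are strictly positive we work on $(0,\infty)^4$; the set where some $h_{ij}$ vanishes has dimension $3<\tfrac{10}{3}$ and may be ignored.) It therefore suffices to prove $\dim\Phi^{-1}(\Sing(2))\le\tfrac{10}{3}$.

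Next I would observe that $\Phi$ is, up to a smooth change of coordinates, a linear coordinate projection. Indeed, replacing the variables $h_{21},h_{22}$ by $r_1=h_{22}/h_{12}$ and $r_2=h_{21}/h_{11}$ gives a $C^\infty$-diffeomorphism of $(0,\infty)^4$ onto itself carrying $\Phi$ to $(h_{11},h_{12},r_1,r_2)\mapsto(r_1,r_2)$, under which $\Phi^{-1}(A)$ becomes $(0,\infty)^2\times A$ for any $A\subset(0,\infty)^2$. As $C^1$-diffeomorphisms are locally bi-Lipschitz and hence preserve Hausdorff dimension, and as $\dim\big((0,\infty)^2\times A\big)=2+\dim A$ for every $A$ — using $\dim(X\times Y)\ge\dim X+\dim Y$ in general, $\dim(X\times Y)\le\dim X+\overline{\dim}_B Y$ with the upper box-counting dimension of bounded pieces of $(0,\infty)^2$ equal to $2$, together with countable stability of Hausdorff dimension — we obtain
$$
\dim\Phi^{-1}(A)=2+\dim A\qquad\text{for all }A\subset(0,\infty)^2.
$$
Applying this with $A=\Sing(2)\cap(0,\infty)^2$ and invoking Theorem~\ref{thm6} with $n=2$, which gives $\dim\Sing(2)=\tfrac43$, yields $\dim\Phi^{-1}(\Sing(2))=\tfrac{10}{3}$, so the exceptional set has codimension at least $4-\tfrac{10}{3}=\tfrac23$. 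In fact the set $\Phi^{-1}(\Sing(2))$ being removed has codimension exactly $\tfrac23$: since $\Sing(2)$ is invariant under translation by $\Z^2$ (its defining inequality involves only $q_1\xi_1+q_2\xi_2+p$ with $p\in\Z$), it meets $(0,\infty)^2$ in a set of full dimension $\tfrac43$.

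The argument is soft, so I do not expect a serious obstacle. The two points needing care are (i) checking that the coordinate change above really is a diffeomorphism of $(0,\infty)^4$ — equivalently, that $\Phi$ is a submersion, which is immediate since the differential of $(a,b)\mapsto b/a$ is $(-b/a^2,\,1/a)\ne(0,0)$ — and (ii) quoting correctly the behaviour of Hausdorff dimension under multiplication by a Euclidean factor. A minor bookkeeping point is that the three possible maxima in \eqref{eqn36} produce three sets of the form $\Psi^{-1}(\Sing(2))$ for submersions $\Psi:(0,\infty)^4\to(0,\infty)^2$ of the same shape (coming from \eqref{eqn46}), each therefore of dimension $\le\tfrac{10}{3}$; the exceptional set is contained in their union, whose dimension is likewise $\le\tfrac{10}{3}$, which is what the proposition asserts.
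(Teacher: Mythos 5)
Your argument is correct and is essentially the paper's own proof: your change of variables $(h_{11},h_{12},h_{21},h_{22})\mapsto(h_{11},h_{12},h_{22}/h_{12},h_{21}/h_{11})$ is exactly the map $g$ used in the paper, after which the exceptional set sits inside a bi-Lipschitz preimage of $\Rp^2\times\big(\Rp^2\cap\Sing(2)\big)$, whose dimension $2+\tfrac43=\tfrac{10}{3}$ follows from Theorem~\ref{thm6} together with the slicing/product formula (the paper cites Lemma~\ref{lem3} where you re-derive it) and dimension invariance under locally bi-Lipschitz maps (Lemma~\ref{lem4}). The case distinction over which term attains the maximum in \eqref{eqn36} is harmless but not needed, since the lower bound \eqref{eqn72} only requires non-singularity of the single point \eqref{eqn44}.
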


\noindent The proof of the proposition will make use of the following two well known results from fractal geometry \cite{MR1333890}.

\begin{lemma}[Marstrand's Slicing Lemma] \label{lem3}
For any $X\subset \R^k$  and $l \in \N$, we have that
$$
\dim(X\times\R^\ell)=\dim X+\ell\,.
$$
\end{lemma}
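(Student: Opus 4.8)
The plan is to split the claimed identity into the two inequalities $\dim(X\times\R^{\ell})\le\dim X+\ell$ and $\dim(X\times\R^{\ell})\ge\dim X+\ell$, after first reducing the unbounded factor $\R^{\ell}$ to the unit cube $C:=[0,1]^{\ell}$. For this reduction, write $\R^{\ell}=\bigcup_{m\in\N}[-m,m]^{\ell}$, so that $X\times\R^{\ell}=\bigcup_{m\in\N}\bigl(X\times[-m,m]^{\ell}\bigr)$. Hausdorff dimension is countably stable (given $s$ above the supremum of the dimensions of the pieces, combine covers of the pieces chosen with $s$-sums below $\ve/2^{m}$, as permitted by Definition~\ref{DefHD}) and is invariant under the bi-Lipschitz linear maps $(\vvv x,\vvv y)\mapsto(\vvv x,\vvv y/m)$ of $\R^{k+\ell}$; hence $\dim(X\times\R^{\ell})=\dim(X\times C)$ and it suffices to prove $\dim(X\times C)=\dim X+\ell$.

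For the upper bound, fix $s>\dim X$ together with $\rho,\ve>0$. By Definition~\ref{DefHD} there is a cover of $X$ by balls $B_{i}\subset\R^{k}$ with radii $r_{i}<\rho$ and $\sum_{i}r_{i}^{\,s}<\ve$; we may assume $r_{i}<1$. Each $B_{i}$ is thickened by covering $C$ with at most $c_{\ell}\,r_{i}^{-\ell}$ balls in $\R^{\ell}$ of radius $r_{i}$, where $c_{\ell}$ depends only on $\ell$. Taking products yields a cover of $X\times C$ by balls of $\R^{k+\ell}$ of radius $\ll r_{i}<\rho$, and its $(s+\ell)$-sum is $\ll\sum_{i}r_{i}^{-\ell}\,r_{i}^{\,s+\ell}=\sum_{i}r_{i}^{\,s}<\ve$; the implied constants depend only on $k$ and $\ell$, so by Definition~\ref{DefHD} (or Remark~\ref{usefulHD}) we get $\dim(X\times C)\le s+\ell$. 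Letting $s\downarrow\dim X$ gives $\dim(X\times C)\le\dim X+\ell$.

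For the lower bound I would invoke the mass distribution principle. Fix $s<\dim X$. Frostman's lemma supplies a Borel probability measure $\mu$ carried by a compact subset of $\overline{X}$ with $\mu\bigl(B(\vvv x,r)\bigr)\le A\,r^{\,s}$ for all $\vvv x,r$; let $\nu$ be Lebesgue measure on $C$, which satisfies $\nu\bigl(B(\vvv y,r)\bigr)\le A'r^{\,\ell}$. Then $\mu\times\nu$ is a probability measure supported in the closure of $X\times C$ and, since $B\bigl((\vvv x,\vvv y),r\bigr)\subset B(\vvv x,r)\times B(\vvv y,r)$, it obeys $(\mu\times\nu)\bigl(B((\vvv x,\vvv y),r)\bigr)\le AA'\,r^{\,s+\ell}$. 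By the mass distribution principle this forces $\mathcal{H}^{\,s+\ell}(X\times C)>0$, hence $\dim(X\times C)\ge s+\ell$; letting $s\uparrow\dim X$ and combining with the upper bound proves the lemma.

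The main obstacle is precisely the lower bound, and within it the legitimacy of the Frostman measure: Frostman's lemma in the form used above is a theorem about analytic (in particular Borel) sets, whereas Lemma~\ref{lem3} is stated for an arbitrary $X\subset\R^{k}$. For a general $X$ one cannot pass to a Borel subset of the same dimension, and the lower bound must instead be extracted directly from the covering definition by a Fubini-type slicing argument: given a cover $\{D_{j}\}$ of $X\times C$, over each $\vvv x\in X$ the non-empty fibres of the $D_{j}$ cover $C$, which after a covering-lemma bookkeeping of overlaps converts an $(s+\ell)$-economical cover of $X\times C$ into an $s$-economical cover of $X$, contradicting $s<\dim X$. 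This is the heart of Marstrand's slicing theorem and is carried out in full in \cite{MR1333890}; for the purposes of Proposition~\ref{ohyes2} the relevant set $X$ is Borel, so the clean Frostman route above already suffices.
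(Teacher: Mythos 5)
The paper does not actually prove this lemma: it is quoted as a ``well known result from fractal geometry'' with a citation to Mattila \cite{MR1333890}, so there is no internal argument to compare yours against. On its own terms your proposal is essentially the standard textbook proof and is sound: the reduction to $X\times[0,1]^{\ell}$ by countable stability and bi-Lipschitz invariance is fine, the upper bound by thickening a cover of $X$ is complete, and you correctly identify the real issue, namely that the Frostman/mass-distribution route for the lower bound needs $X$ to be Borel (or analytic), while the lemma as stated allows arbitrary $X$, for which one falls back on the slicing/weighted-cover argument in \cite{MR1333890}; you also rightly observe that in the paper's applications (Proposition~\ref{ohyes2} and the bound \eqref{eqn+1}) the set in question is $\Sing(n)$ or $\Sing^m(n)$, which is Borel, so the Frostman route suffices there.

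One small repair is needed in the Frostman step as written: you take $\mu$ carried by a compact subset of $\overline{X}$, and then conclude via the mass distribution principle that $\mathcal{H}^{s+\ell}(X\times C)>0$. The mass distribution principle only bounds from below the measure of sets containing the support of $\mu\times\nu$, so with support merely in $\overline{X}\times C$ you would only get $\dim(\overline{X}\times C)\ge s+\ell$, which is not what is wanted. For Borel $X$ with $s<\dim X$ one has $\mathcal{H}^{s}(X)>0$, and Frostman's lemma supplies $\mu$ supported on a \emph{compact subset of $X$ itself}; with that choice $\mu\times\nu$ is supported in $X\times C$ and your argument goes through verbatim. With this adjustment, and with the general (non-Borel) case delegated to the covering argument in \cite{MR1333890} as you indicate, the proposal is a correct proof of the lemma.
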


\begin{lemma}\label{lem4}
Let $X\subset \R^k$ and $g:\R^k\to\R^k$ be a locally bi-Lipschitz map. Then
$$
\dim \big(g(X)\big)=\dim X\,.
$$
\end{lemma}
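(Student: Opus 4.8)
The plan is to reduce the statement to the elementary fact that Lipschitz maps cannot increase Hausdorff dimension, and then to pass from the \emph{local} bi-Lipschitz hypothesis to that global fact by a countable decomposition of $\R^k$. Throughout, recall that ``$g$ locally bi-Lipschitz'' means: every $x\in\R^k$ has an open neighbourhood $U_x$ and constants $0<c_x\le C_x$ with $c_x|u-v|\le|g(u)-g(v)|\le C_x|u-v|$ for all $u,v\in U_x$.

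First I would establish the basic estimate: if $A\subset\R^k$ and $h\colon A\to\R^k$ satisfies $|h(u)-h(v)|\le C|u-v|$ for all $u,v\in A$, then $\dim h(E)\le\dim E$ for every $E\subset A$. This comes straight from Definition~\ref{DefHD}. Fix $s>\dim E$ and $\rho,\ve>0$. Choose a cover of $E$ by balls $B_i$ of radius $r(B_i)<\rho/(2C)$ with $\sum_i r(B_i)^s<\ve/(2C)^s$. The sets $B_i\cap A$ still cover $E$, so the sets $h(B_i\cap A)$ cover $h(E)$; each has diameter at most $C\,\mathrm{diam}(B_i\cap A)\le 2C\,r(B_i)<\rho$, hence lies in a ball $B_i'$ of radius $2C\,r(B_i)<\rho$ (centred at any of its points). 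Since $\sum_i r(B_i')^s=(2C)^s\sum_i r(B_i)^s<\ve$, the exponent $s$ is admissible for $h(E)$, whence $s\ge\dim h(E)$; letting $s\downarrow\dim E$ gives the claim. Two consequences follow. (i) \emph{Bi-Lipschitz invariance on a fixed domain}: if $g$ restricted to a set $U$ satisfies $c|u-v|\le|g(u)-g(v)|\le C|u-v|$ with $c>0$, then $g|_U$ is injective and $(g|_U)^{-1}\colon g(U)\to U$ is $(1/c)$-Lipschitz; applying the estimate to $g|_U$ and to $(g|_U)^{-1}$ yields $\dim g(E)=\dim E$ for every $E\subset U$. (ii) \emph{Countable stability}: $\dim\bigcup_{j\ge1}Y_j=\sup_j\dim Y_j$, again immediate from Definition~\ref{DefHD} by concatenating covers of the $Y_j$ and splitting the tolerance as $\sum_j\ve2^{-j}$.

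Finally comes the localisation step. By hypothesis the open sets $\{U_x\}_{x\in\R^k}$ cover $\R^k$; since $\R^k$ is second countable, they admit a countable subcover $\{U_j\}_{j\ge1}$. Put $X_j:=X\cap U_j$, so that $X=\bigcup_j X_j$ and $g(X)=\bigcup_j g(X_j)$. Fact (i), applied on each $U_j$ with $E=X_j$, gives $\dim g(X_j)=\dim X_j$. Fact (ii) then gives $\dim g(X)=\sup_j\dim g(X_j)=\sup_j\dim X_j=\dim X$, which is the assertion of the lemma.

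I expect no genuine obstacle here; the two places that need a little care are exactly the ones flagged above: (a) Definition~\ref{DefHD} is phrased via balls rather than arbitrary sets, which forces the minor manoeuvre of intersecting the covering balls with the local domain (this only shrinks diameters) and then re-covering the images by balls of comparably scaled radius; and (b) the reduction from the local to the global statement, which is legitimate precisely because of second countability of $\R^k$ together with the countable stability of Hausdorff dimension in (ii).
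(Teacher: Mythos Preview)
Your argument is correct and is the standard proof of this fact: Lipschitz maps do not increase Hausdorff dimension (immediate from the definition via covers), bi-Lipschitz maps on a fixed domain therefore preserve it, and the passage from local to global is handled by second countability of $\R^k$ together with countable stability of $\dim$. The minor technical point you flag in (a), about re-covering images by balls rather than arbitrary sets, is handled cleanly.

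As for comparison with the paper: there is nothing to compare. The paper does not prove Lemma~\ref{lem4} at all; it is simply quoted (together with Lemma~\ref{lem3}) as one of ``two well known results from fractal geometry'' with a reference to Mattila's book. Your write-up thus supplies a self-contained proof where the paper opts for a citation.
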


\begin{proof}[Proof of Proposition~\ref{ohyes2}.]
Consider the following map on the channel coefficients
$$
g:\Rp^4\to\Rp^4\quad\text{such that}\quad g(h_{11},h_{12},h_{21},h_{22})=\left(h_{11},h_{12},\frac{h_{22}}{h_{12}},\frac{h_{21}}{h_{11}}\right)\,.
$$
As we have already discussed, for any $\bm\xi$ given by \eqref{eqn44} such that $\bm\xi\in\Rp^2\setminus\Sing(2)$ we have that \eqref{eqn72} holds. Hence, \eqref{eqn72} holds for any choice of channel coefficients such that
\begin{equation}\label{eqn73}
(h_{11},h_{12},h_{21},h_{22})\not\in g^{-1}\Big(\Rp^2\times \big(\Rp^2\cap\Sing(2)\big)\Big)\,.
\end{equation}
By Lemma~\ref{lem3} and Theorem~\ref{thm6}, it follows that
$$
\textrm{codim}\,\Big(\Rp^2\times \big(\Rp^2\cap\Sing(2)\big)\Big)=\frac23 \, .
$$
Finally, note that locally at every point of $\Rp^4$ the map $g$ is a $C^1$ diffeomorphism and hence is bi-Lipschitz. Therefore, by Lemma~\ref{lem4} it follows that  $g^{-1}$ preserves dimension and thus the codimension of the right hand side of \eqref{eqn73} is $\tfrac23$. This completes the proof.
\end{proof}

\begin{remark}
Just to clarify,  that  $\bm\xi$ appearing  in \eqref{eqn72}  corresponds to the point given by \eqref{eqn44} associated with  the choice of the channel coefficients $h_{ij}$  $(i,j=1,2)$ and $\kappa'(\bm\xi)>0$  is a constant dependent on $\bm\xi$. Note that instead of \eqref{eqn44}, one can equivalently consider $\bm\xi$  to be  either of the points given by \eqref{eqn46} and  this would lead to  \eqref{eqn72} with $C_2$ defined by \eqref{eqn47}.
\end{remark}

 \medskip

Naturally, the analogue of Proposition~\ref{ohyes2} holds  for the minimal distance $\dminVBtwo$ between the  associated points  given by \eqref{eqn33}.   However, as in the Dirichlet non-improvable setup (cf. Remark \ref{nooverlap}),  we cannot guarantee that the arbitrary large  integers $Q$ on which the lower bounds for the minimal distances are attained,  overlap.  If we could guarantee infinitely many overlaps,  it would enable us to strengthen Theorem \ref{MotDOF}  concerning the Degrees of Freedoms (DoF) of the two-user $X$-channel described in Example~2.    With this goal in mind,   it is appropriate to introduce the following notion of jointly singular points.

 \begin{definition}[Jointly singular and non-singular points]\label{def6}
The pair of points $(\bm\xi_1,\bm\xi_2)\in\R^n\times\R^n$ is said to be {\em jointly singular} if for any $\ve>0$ for all sufficiently large $Q\in\N$ there exists an integer point  $(p,\vvv q)\in \Z \times \Z^n$ with $1\le |\vvv q|\le Q$ satisfying
$$
\min_{1\le j\le 2}|q_1\xi_{j,1}+\dots+q_n\xi_{j,n}+p|<\ve Q^{-n}\,,
$$
where $\bm\xi_j=(\xi_{j,1},\dots,\xi_{j,n})$, $j=1,2$.
The pair $(\bm\xi_1,\bm\xi_2)\in\R^n\times\R^n$ will be called {\em jointly non-singular} if it is not jointly singular, that is if there exists a constant $\kappa'=\kappa'(\bm\xi_1,\bm\xi_2)>0$ such that there exist arbitrarily large $Q\in\N$ so that for all integer points  $(p,\vvv q)\in \Z \times \Z^n$ with $1\le |\vvv q|\le Q$
  \begin{equation}\label{eqn74}
    \min_{1\le j\le 2}\left|q_1\xi_{j,1}+\dots+q_n\xi_{j,n}+p\right| \ge \kappa'Q^{-n}\,.
  \end{equation}
\end{definition}

\noindent The set of jointly singular pairs in $\R^n\times\R^n$
will be denoted by $\Sing^{2}(n)$.  This set is not and should not
be confused with the standard simultaneous singular set corresponding to two linear forms in $n$ variables (see  \S\ref{FR}).

%and similarly the set of jointly non-singular pairs by $\NonSing^{2}(n)$.

\medskip

The above notion of jointly non-singular pairs enables us to prove the following DoF statement.

\begin{proposition}\label{thm7}
Let $(h_{11},h_{12}, h_{21}, h_{22}) \in \Rp^4$ be given and let $\bm\xi$ be any of the points \eqref{eqn44} or \eqref{eqn46}, let $\bm\xi'$ be any of the points \eqref{eqn48} or \eqref{eqn49}. Suppose that
\begin{equation}\label{eqn75}
(\bm\xi,\bm\xi')\not\in\Sing^{2}(2)\,.
\end{equation}
Then \eqref{eqn66} holds, that is the total DoF of the two-user $X$-channel with $h_{ij}$  $(i,j=1,2)$ as its channel coefficients  is $\frac43$.
\end{proposition}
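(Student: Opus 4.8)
The plan is to deduce \eqref{eqn66} directly from the hypothesis \eqref{eqn75} by feeding a single common sequence of good moduli into the minimal-distance estimate of \S\ref{BAD20} and then into the Degrees-of-Freedom computation of \S\ref{sv2.4}. First I would unravel Definition~\ref{def6}: since $(\bm\xi,\bm\xi')\notin\Sing^{2}(2)$, there are a constant $\kappa'=\kappa'(\bm\xi,\bm\xi')>0$ and an infinite set $\cQ\subset\N$ such that for every $Q\in\cQ$ and every $(p,\vvv q)\in\Z\times\Z^{2}$ with $1\le|\vvv q|\le Q$ we have both
$$
|q_1\xi_1+q_2\xi_2+p|\ \ge\ \frac{\kappa'}{Q^{2}}
\qquad\text{and}\qquad
|q_1\xi'_1+q_2\xi'_2+p|\ \ge\ \frac{\kappa'}{Q^{2}}\,.
$$
The crucial feature, and the only genuinely new ingredient, is that the \emph{same} $Q\in\cQ$ simultaneously serves $\bm\xi$ and $\bm\xi'$; this is exactly what is missing from the purely non-singular approach of Proposition~\ref{ohyes2} and flagged in Remark~\ref{nooverlap}.

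Next, for each $Q\in\cQ$, I would run the translation already carried out in \S\ref{BAD20}: multiplying the first inequality by $\lambda$ times the relevant product of channel coefficients identifies its left-hand side with the distance between two admissible outcomes of $y_1$ in \eqref{eqn30}, which yields $\dminVBone\ge\kappa' C_2/Q^{2}$ with $C_2$ as in \eqref{eqn47} (scaled by $\lambda$), and the identical manoeuvre applied to the second inequality and to \eqref{eqn31} yields $\dminVBtwo\ge\kappa' C_2'/Q^{2}$ with the analogous constant $C_2'$. As in \S\ref{BAD20} the choice of representatives among \eqref{eqn44}/\eqref{eqn46} and among \eqref{eqn48}/\eqref{eqn49} only affects the values of $C_2$ and $C_2'$. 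Since both bounds now hold for the \emph{same} $Q\in\cQ$,
$$
\dminVB=\min\{\dminVBone,\dminVBtwo\}\ \ge\ \frac{\kappa'\min\{C_2,C_2'\}}{Q^{2}}\ \gg\ \frac{\lambda}{Q^{2}}\qquad(Q\in\cQ)\,,
$$
with implied constant depending only on the fixed $h_{ij}$.

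Finally I would repeat the Degrees-of-Freedom calculation of \S\ref{sv2.4} verbatim, now taken along $\cQ$. Fix $\delta>0$; for each $Q\in\cQ$ choose the scaling factor $\lambda=Q^{2+\delta}$, so that $\dminVB\gg Q^{\delta}\to\infty$ and the error probability \eqref{eqn63} tends to $0$, while the power constraint \eqref{eqn62} is met with $P$ comparable to $(\lambda Q)^{2}=Q^{6+2\delta}$. As in \S\ref{sv2.4} the four messages $u_1,u_2,v_1,v_2$ carry about $4\log Q$ bits, whereas the benchmark point-to-point channel under power $P$ delivers $\tfrac12\log(1+P)$ bits; letting $Q\to\infty$ along $\cQ$ (hence $P\to\infty$) gives ${\rm DoF}\ge 4/(3+\delta)$, and since $\delta>0$ is arbitrary, ${\rm DoF}\ge\tfrac43$. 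Together with the upper bound ${\rm DoF}\le\tfrac43$ of \cite{MR2446746} this is \eqref{eqn66}. I expect no serious obstacle: once the first paragraph is in place the rest is a straight assembly of already-established arguments; the only point deserving a remark is that the power levels here range over the infinite set $\{Q^{6+2\delta}:Q\in\cQ\}$ rather than over all of $\N$, which is harmless for the notion of DoF used in \S\ref{sv2.4} (cf.~\eqref{eqn65}).
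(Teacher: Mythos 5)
Your proposal is correct and follows essentially the same route as the paper: unravel Definition~\ref{def6} to obtain a single constant $\kappa'$ and one infinite set $\cQ$ of moduli $Q$ on which both linear-form inequalities hold simultaneously, deduce the simultaneous lower bounds for $\dminVBone$ and $\dminVBtwo$ (i.e.\ \eqref{eqn72} and its analogue) and hence \eqref{eqn64} along $\cQ$, and then rerun the DoF computation of \S\ref{sv2.4} with the limit in \eqref{eqn65} taken along $\cQ$, invoking the $4/3$ upper bound of \cite{MR2446746}. The only (harmless) cosmetic difference is that you work with the unweakened bound $\dminVB\gg\lambda/Q^{2}$ and scaling $\lambda=Q^{2+\delta}$ rather than passing through the $\ve$-weakened form \eqref{eqn64} as the paper does.
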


\begin{proof}
To start with, simply observe that condition \eqref{eqn75} means that
 there exist $\kappa'>0$ and an infinite subset $\cQ\subset\N$ such that for every $Q\in\cQ$ and all integer points  $(p,\vvv q)\in \Z \times \Z^2$ with $1\le |\vvv q|\le Q$
  \begin{equation}\label{eqn76}
    \left|q_1\xi_{1}+q_2\xi_{2}+p\right| \ge \kappa'Q^{-2}\quad\text{and}\quad \left|q_1\xi'_{1}+q_2\xi'_{2}+p\right| \ge \kappa'Q^{-2}\, .
  \end{equation}
Consequently, for every $Q\in\cQ$ we can guarantee  that \eqref{eqn72} and its analogue for  $\dminVBtwo$ are simultaneously valid.
This in turn implies \eqref{eqn64} for every $Q\in\cQ$. From this point onwards, the rest of the argument given in \S\ref{sv2.4}  leading to \eqref{eqn66}  remains unchanged apart from the fact that the limit in \eqref{eqn65} is now along $Q\in\cQ$ rather than the natural numbers.
\end{proof}

Proposition~\ref{thm7} provides a natural pathway for strengthening Theorem~\ref{MotDOF}. This we now describe.  It is reasonable to expect that the set of $(\bm\xi,\bm\xi')$ not satisfying \eqref{eqn75} is of dimension strictly smaller than four -- the dimension of the ambient space. Indeed, this is something that we are able to prove.

\begin{theorem}\label{prob1} Let $n\ge2$. Then
\begin{equation}\label{77sv}
\dim\Sing^2(n) = 2n-\frac{n}{(n+1)} \, .
\end{equation}
\end{theorem}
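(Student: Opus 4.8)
The plan is to prove the two bounds $\dim\Sing^2(n)\ge 2n-\frac{n}{n+1}$ and $\dim\Sing^2(n)\le 2n-\frac{n}{n+1}$ separately, using throughout the identity $2n-\frac{n}{n+1}=\frac{n^2}{n+1}+n$. For the lower bound one records the elementary inclusion $\Sing^2(n)\supseteq\Sing(n)\times\R^n$: if $\bm\xi_1\in\Sing(n)$, then for every $\ve>0$ and all sufficiently large $Q$ there is $(p,\vvv q)\in\Z\times\Z^n$ with $1\le|\vvv q|\le Q$ and $|q_1\xi_{1,1}+\dots+q_n\xi_{1,n}+p|<\ve Q^{-n}$, and this same pair realises $\min_{1\le j\le2}|q_1\xi_{j,1}+\dots+q_n\xi_{j,n}+p|<\ve Q^{-n}$ whatever $\bm\xi_2\in\R^n$ may be. Hence, by Marstrand's Slicing Lemma (Lemma~\ref{lem3}) and the Cheung--Chevallier formula (Theorem~\ref{thm6}, which needs $n\ge2$),
$$
\dim\Sing^2(n)\ \ge\ \dim\bigl(\Sing(n)\times\R^n\bigr)\ =\ \dim\Sing(n)+n\ =\ \tfrac{n^2}{n+1}+n\ =\ 2n-\tfrac{n}{n+1}.
$$

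For the upper bound, note first that $\Sing^2(n)$ is invariant under integer translations in each coordinate, so it suffices to bound $\dim\bigl(\Sing^2(n)\cap[0,1]^{2n}\bigr)$. For $\bm\eta\in\R^n$ and $Q\in\N$ set $\psi_{\bm\eta}(Q):=Q^n\min\{\,|q_1\eta_1+\dots+q_n\eta_n+p|:(p,\vvv q)\in\Z\times\Z^n,\ 1\le|\vvv q|\le Q\,\}$; unwinding Definition~\ref{def6}, $(\bm\xi_1,\bm\xi_2)$ is jointly singular exactly when $\min\{\psi_{\bm\xi_1}(Q),\psi_{\bm\xi_2}(Q)\}\to0$ as $Q\to\infty$. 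The obvious monotonicity $\psi_{\bm\eta}(Q')\le(Q'/Q)^n\psi_{\bm\eta}(Q)$ for $Q'\ge Q$ shows that $\psi_{\bm\eta}(Q)<\ve$ forces $\psi_{\bm\eta}(Q')<2^{nT}\ve$ throughout the window $I_k:=[2^{kT},2^{(k+1)T})$ containing $Q$, where $T\in\N$ is a large parameter to be sent to infinity at the end. Fixing $T$ and putting $\ve:=2^{-2nT}$, $\ve':=2^{-nT}$, we deduce (after a harmless countable-union reduction discarding an initial, pair-dependent stretch of scales) that for every jointly singular pair there is an assignment $\sigma\colon\N\to\{1,2\}$ such that for each $k$ the point $\bm\xi_{\sigma(k)}$ is \emph{$\ve'$-Dirichlet improvable throughout $I_k$}, meaning that for every $Q\in I_k$ there is $(p,\vvv q)$ with $1\le|\vvv q|\le Q$ and $|q_1\xi_{\sigma(k),1}+\dots+q_n\xi_{\sigma(k),n}+p|<\ve'Q^{-n}$.

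The analytic heart of the matter is a scale-localised, $\ve'$-quantitative form of the Cheung--Chevallier estimate, which we isolate as a \emph{Claim}: for every $\delta>0$ there exist $\ve_0>0$ and $T_0\in\N$ such that for all $0<\ve'<\ve_0$, all $T\ge T_0$, all $k\in\N$ and every cube $C\subset[0,1]^n$ of side $2^{-kT}$, the number of subcubes of $C$ of side $2^{-(k+1)T}$ meeting the set $\{\bm\eta\in[0,1]^n:\bm\eta\ \text{is}\ \ve'\text{-Dirichlet improvable throughout}\ I_k\}$ is at most $2^{T(n^2/(n+1)+\delta)}$. Granting the Claim, fix $\delta>0$ and cover $\Sing^2(n)\cap[0,1]^{2n}$ at level $N$ as a union over the $2^N$ finite patterns $\sigma\colon\{1,\dots,N\}\to\{1,2\}$: for a fixed $\sigma$, refine scale by scale, at scale $k$ keeping inside each current cube of side $2^{-kT}$ at most $2^{T(n^2/(n+1)+\delta)}$ subcubes of side $2^{-(k+1)T}$ in the $\sigma(k)$-th $\R^n$-block and all $2^{nT}$ of them in the other block. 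Each scale then multiplies the cube count by $2^{T(n^2/(n+1)+\delta+n)}$, \emph{independently of $\sigma$}, so the $\sigma$-piece is covered by at most $2^{NT(n^2/(n+1)+\delta+n)}$ cubes of side $2^{-NT}$, and summing over the $2^N$ patterns covers $\Sing^2(n)\cap[0,1]^{2n}$ by at most $2^{N}\cdot2^{NT(n^2/(n+1)+\delta+n)}$ such cubes. Letting $N\to\infty$ gives $\dim\bigl(\Sing^2(n)\cap[0,1]^{2n}\bigr)\le n+\tfrac{n^2}{n+1}+\delta+\tfrac1T$, and then $T\to\infty$ followed by $\delta\to0$ yields $\dim\Sing^2(n)\le n+\tfrac{n^2}{n+1}=2n-\tfrac{n}{n+1}$.

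The whole argument hinges on the Claim, and that is where the real work lies. What must be shown is that the set of points which stay $\ve'$-Dirichlet improvable over an \emph{entire} multiplicative window of logarithmic width $T$ refines, inside any cube of the previous scale, by a factor $2^{T(n^2/(n+1)+o(1))}$ uniformly in the scale, the $o(1)$ tending to $0$ as $\ve'\to0$ and $T\to\infty$. This is precisely a self-similar, scale-by-scale repackaging of the upper-bound half of Theorem~\ref{thm6} (equivalently, of the statement, via the correspondence with diagonal flows on the space of unimodular lattices in $\R^{n+1}$, that a trajectory confined to a cusp neighbourhood throughout a time interval loses volume at the exponential rate $\tfrac{n}{n+1}$ per unit time). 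Extracting such a uniform, quantitative version from the Cheung--Chevallier construction (or its refinements) is the crux; the combinatorial factor $2^N$ arising from the sum over assignment patterns is harmless only because we use coarse windows and let $T\to\infty$, which is the reason for working with windows of width $T$ rather than with dyadic scales.
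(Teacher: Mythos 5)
Your lower bound coincides with the paper's: the inclusion $\Sing(n)\times\R^n\subset\Sing^2(n)$, Marstrand's Slicing Lemma and Theorem~\ref{thm6} give $\dim\Sing^2(n)\ge n+\frac{n^2}{n+1}$, exactly as in \eqref{eqn+1}--\eqref{yes}. Your reduction of the upper bound to per-window Dirichlet improvability is also sound: the monotonicity $\psi_{\bm\eta}(Q')\le(Q'/Q)^n\psi_{\bm\eta}(Q)$ is correct, and the factor $2^N$ from the assignment patterns is indeed negligible after $T\to\infty$. The genuine gap is the Claim, which you state but do not prove, and which carries all of the content of the upper bound. It does not follow from Theorem~\ref{thm6}: knowing the Hausdorff dimension of $\Sing(n)$ gives no uniform counting bound, localised simultaneously in scale $k$ and in position (every cube $C$ of side $2^{-kT}$), for the set of points that are $\ve'$-Dirichlet improvable throughout an entire window $I_k$. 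Such uniform, self-similar covering estimates are precisely the hard quantitative core of the Cheung--Chevallier-type results, so as written your argument assumes a strengthened, localised version of the very statement it is meant to repackage; the sentence acknowledging that ``extracting such a uniform, quantitative version \dots is the crux'' is an accurate description of an unfilled hole, not a proof step.

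The paper closes exactly this hole by passing to homogeneous dynamics and quoting a published quantitative input. By Dani's correspondence, $(\bm\xi_1,\bm\xi_2)\in\Sing^2(n)$ iff for every $\ve>0$ eventually at each time at least one of the orbits $g_tu_{\bm\xi_j}\Z^{n+1}$ lies outside $K_\ve$ (see \eqref{vb700}); Proposition~\ref{prop3} then pigeonholes on the proportions $\delta_1,\delta_2$ of the discrete times $t=s\ell$, $1\le\ell\le N$, that each component spends outside $K_\ve$, with $\delta_1+\delta_2\ge 1-\tfrac{3}{s}$, and the Kadyrov--Kleinbock--Lindenstrauss--Margulis covering theorem (Theorem~\ref{thm4}, i.e.\ the $\delta$-escape of mass result) supplies, as a black box, a cover of each $Z(\ve,N,s,\delta_j)$ by $Cs^{3N}e^{(n+1-\delta_j)nsN}$ balls of radius $e^{-(n+1)sN}$; multiplying the two covers and letting $s\to\infty$ gives $\dim\Sing^2(n)\le 2n-\frac{n}{n+1}$ (this is Theorem~\ref{prob5+} with $m=2$). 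Note also that the paper's bookkeeping differs from yours: it does not insist that a single component be improvable throughout a whole window, only that the two components' proportions of bad times sum to nearly one, which is what lets the global (non-localised) KKLM covering be used directly. To repair your write-up you should either prove your localised Claim (essentially a per-cube version of Theorem~\ref{thm4}, which is substantial extra work), or restructure the upper bound along the paper's lines and invoke Theorem~\ref{thm4} explicitly.
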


\noindent The theorem will easily follow from a more general statement concerning systems of linear forms proved in \S\ref{FR} below; namely, Theorem~\ref{prob5}.
Note that Theorem~\ref{prob1} is not enough for improving Theorem~\ref{MotDOF}. Within Proposition~\ref{thm7}, the point $\bm\xi$ is given by \eqref{eqn44} or \eqref{eqn46} and $\bm\xi'$ is given by \eqref{eqn48} or \eqref{eqn49}, and are therefore dependent via \eqref{eqn50} and \eqref{eqn51}.
The above theorem does not take into consideration this dependency. This is rectified by the following result.

\begin{theorem}\label{prob3}
Let $\bm f:U\to\R^n$ be a locally bi-Lipschitz map defined on an open subset $U\subset\R^n$ and let
$$
\Sing^2_{\bm f}(n):=\{\bm\xi\in U:(\bm\xi,\bm f(\bm\xi))\in\Sing^2(n)\}\,.
$$
Then
\begin{equation}\label{vb100}
\dim \Sing^2_{\bm f}(n)\le n-\frac{n}{2(n+1)}<n.
\end{equation}
\end{theorem}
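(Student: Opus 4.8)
The plan is to reduce, by a pigeonhole over the two ``directions'' $\bm\xi$ and $\bm f(\bm\xi)$ together with the bi-Lipschitz invariance of Hausdorff dimension (Lemma~\ref{lem4}), to a dimension bound for a single ``half-singular'' set in $\R^n$, and then to establish that bound by the same circle of ideas — the Dani correspondence and the Kadyrov et al escape-of-mass estimate — that underlies Theorem~\ref{prob1} and the more general Theorem~\ref{prob5} of \S\ref{FR}. Throughout we may work locally and hence assume $\bm f$ is bi-Lipschitz on $U$.

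\textbf{The reduction.} Fix a rate $R>1$ and windows $W_j=[R^j,R^{j+1})$. For $\bm\xi\in\Sing^2_{\bm f}(n)$ and $\ve>0$, Definition~\ref{def6} gives a threshold $Q_0(\ve)$ such that for every $Q\ge Q_0(\ve)$ some $(p,\vvv q)\in\Z\times\Z^n$ with $1\le|\vvv q|\le Q$ satisfies $|\vvv q\cdot\bm\xi+p|<\ve Q^{-n}$ or $|\vvv q\cdot\bm f(\bm\xi)+p|<\ve Q^{-n}$. Evaluating at $Q=\lceil R^j\rceil$, for every large $j$ at least one of $\bm\xi$, $\bm f(\bm\xi)$ is ``$\ve$-Dirichlet-improvable at scale $\lceil R^j\rceil$''; letting $D_\ve$ (resp.\ $E_\ve$) be the set of such $j$ for $\bm\xi$ (resp.\ $\bm f(\bm\xi)$) we get $D_\ve\cup E_\ve\supseteq\{j:\lceil R^j\rceil\ge Q_0(\ve)\}$, hence $\max\{\overline d(D_\ve),\overline d(E_\ve)\}\ge\tfrac12$ for the upper density $\overline d$ in $\N$. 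Since $D_\ve,E_\ve$ are non-decreasing in $\ve$, running this over $\ve=1/k\to0$ and pigeonholing on which direction realises the majority for infinitely many $k$ shows that one of $\bm\xi,\bm f(\bm\xi)$ lies in the set $S\subseteq\R^n$ of all $\bm\eta$ for which, for every $\ve>0$, the set of $j\in\N$ admitting some $(p,\vvv q)$ with $1\le|\vvv q|\le\lceil R^j\rceil$ and $|\vvv q\cdot\bm\eta+p|<\ve\lceil R^j\rceil^{-n}$ has upper density at least $\tfrac12$. Therefore $\Sing^2_{\bm f}(n)\subseteq S\cup\bm f^{-1}(S)$, and by Lemma~\ref{lem4} (applied locally to $\bm f^{-1}$), $\dim\Sing^2_{\bm f}(n)\le\dim S$. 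Note $\Sing(n)\subseteq S$, so $S$ is strictly larger than $\Sing(n)$; this is precisely why the crude inclusion $\Sing^2_{\bm f}(n)\subseteq\Sing(n)\cup\bm f^{-1}(\Sing(n))$ (which would give the stronger but false codimension $\tfrac{n}{n+1}$) is not available and the density refinement is needed.

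\textbf{Bounding $\dim S$.} Via the Dani correspondence, $\bm\eta$ being $\ve$-Dirichlet-improvable at scale $Q$ is equivalent to the lattice $g_{\log Q}u_{\bm\eta}\Z^{n+1}$ being $\gg\log(1/\ve)$ deep in the cusp of $X_{n+1}=\mathrm{SL}_{n+1}(\R)/\mathrm{SL}_{n+1}(\Z)$, where $g_t=\operatorname{diag}(e^t,\dots,e^t,e^{-nt})$ and $u_{\bm\eta}$ is the associated unipotent. Thus $\bm\eta\in S$ iff its $g_t$-trajectory enters arbitrarily deep cusp neighbourhoods at a set of times $t\in(\log R)\N$ of upper density $\ge\tfrac12$ — a ``partially divergent'' trajectory. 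The dimension of the parameters $\bm\eta$ giving such trajectories is controlled by the effective escape-of-mass estimates of Kadyrov et al: for this horospherical flow the full-escape codimension rate is $\tfrac{n}{n+1}$ (this is exactly what produces $\dim\Sing(n)=\tfrac{n^2}{n+1}$ in Theorem~\ref{thm6}), and escape on only a proportion $\theta$ of times costs a proportional codimension $\theta\cdot\tfrac{n}{n+1}$; with $\theta=\tfrac12$ this yields $\dim S\le n-\tfrac{n}{2(n+1)}$. (An elementary alternative avoids dynamics: build a Cantor cover of $S$ window by window, using at each ``good'' window $j$ that the $\ve$-improvable points at scale $\lceil R^j\rceil$ inside a stage-$(j-1)$ box form a controlled union of thin neighbourhoods of rational hyperplanes — quantified by the ubiquity/counting estimates behind Theorem~\ref{EKG} — and the trivial refinement at the at most $J/2$ ``bad'' windows among the first $J$; after absorbing the $\le 2^J$ possible good/bad patterns by taking $R$ large, the resulting ball count forces the same bound.)

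\textbf{The main obstacle.} The crux is the quantitative ``partial escape $\Rightarrow$ proportional codimension'' step of the previous paragraph. In the dynamical route one must feed into the Kadyrov et al framework the non-standard hypothesis that escape occurs at only a positive proportion of times, and check that the proportion $\tfrac12$ genuinely survives the two limiting procedures (the pigeonhole between $\bm\xi$ and $\bm f(\bm\xi)$ and the limit $\ve\to0$). In the elementary route the delicate point is the self-similar bookkeeping: the box radii, the window ratio $R$, and the cusp-depth parameter must be tuned together so that a single good window contracts by exactly the amount encoded in the exponent $\tfrac{n}{n+1}$ (shrinking radii too aggressively destroys the gain coming from the slab structure, which is the mechanism making the codimension $\tfrac{n}{n+1}$ rather than $n$), after which $\ge J/2$ good windows out of $J$ give codimension $\tfrac{n}{2(n+1)}$. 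Everything else — the dichotomy, the density pigeonhole, Marstrand slicing (Lemma~\ref{lem3}) for the analogous product statements, and the bi-Lipschitz reduction (Lemma~\ref{lem4}) — is routine and parallels the proof of Proposition~\ref{ohyes2}.
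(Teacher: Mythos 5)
Your proposal is correct and follows essentially the same route as the paper: the Dani correspondence, the Kadyrov--Kleinbock--Lindenstrauss--Margulis covering estimate, a pigeonhole forcing one of $\bm\xi$, $\bm f(\bm\xi)$ to spend a proportion at least $\tfrac12$ of the (logarithmically spaced) times deep in the cusp, and bi-Lipschitz invariance to pass between the two coordinates; the only packaging difference is that you first reduce to a single ``half-singular'' set $S\subset\R^n$ via an upper-density pigeonhole, whereas the paper keeps the pair on the graph $\cM_{\bm f}\subset\mathbb{M}_{n,2}$, derives $\delta_1+\delta_2\ge 1-\tfrac3s$ from Proposition~\ref{prop3}, and covers $Z_2(\ve,N,s,\bm\delta)\cap\cM_{\bm f}$ using the factor with the larger $\delta_j\ge\tfrac12-\tfrac{3}{2s}$. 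The step you flag as the ``main obstacle'' is not one: the partial-escape hypothesis is exactly the form in which \cite[Theorem~1.5]{MR3736492} is stated and is quoted as Theorem~\ref{thm4} (a cover of $Z(\ve,N,s,\delta)$ by $Cs^{3N}e^{(n+1-\delta)nsN}$ balls of radius $e^{-(n+1)sN}$ for any $\delta\in[0,1)$), and the survival of the proportion $\tfrac12$ through the limiting procedures is handled in the paper simply by discretising $\delta_j\in\tfrac1s\Z$ and letting $s\to\infty$ only at the very end; your upper-density (infinitely-many-$N$) variant also suffices, since above the critical exponent the ball counts times radius to the power $t$ decay geometrically in $N$. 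One minor slip: for a single linear form the relevant flow is $g_t=\operatorname{diag}(e^{nt},e^{-t},\dots,e^{-t})$ as in the paper, not $\operatorname{diag}(e^{t},\dots,e^{t},e^{-nt})$, but this is a transposed convention and does not affect the argument.
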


As with Theorem~\ref{prob1}, we  defer the proof of the above theorem till \S\ref{FR}. Combining the $n=2$ case of Theorem~\ref{prob3} with Proposition~\ref{thm7} gives the following strengthening of the result of Motahari et al on the DoF of a two-user X-channel (Theorem~\ref{MotDOF}).

\begin{theorem}  \label{MotDOF2}
 The total DoF of the two-user $X$-channel given by \eqref{eqn66} can be achieved for all realisations of the channel coefficients $h_{ij}$  $(i,j=1,2)$ except on a subset of Hausdorff dimension $\le 4-\frac{1}{3}$; that is, of codimension $\ge\frac13$.
\end{theorem}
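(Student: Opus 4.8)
The plan is to feed the $n=2$ case of Theorem~\ref{prob3} into Proposition~\ref{thm7}, following the coordinate-change argument already used in the proof of Proposition~\ref{ohyes2}. First I would fix the most convenient realisation of the pair $(\bm\xi,\bm\xi')$ attached to the channel coefficients: take $\bm\xi=(h_{22}/h_{12},\,h_{21}/h_{11})$ as in \eqref{eqn44} and $\bm\xi'=(h_{12}/h_{22},\,h_{11}/h_{21})$ as in \eqref{eqn48}, so that $\bm\xi'=\bm f(\bm\xi)$ with $\bm f(x,y)=(1/x,1/y)$, which is one of the maps in \eqref{eqn51} and a $C^1$ diffeomorphism (hence locally bi-Lipschitz) of $U:=(0,\infty)^2$. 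By Proposition~\ref{thm7}, the total DoF of the channel equals $\tfrac43$ whenever $(\bm\xi,\bm f(\bm\xi))\notin\Sing^2(2)$, i.e.\ whenever $\bm\xi\notin\Sing^2_{\bm f}(2)$. Consequently the exceptional set $\mathcal E\subset\Rp^4$ of channel coefficients for which \eqref{eqn66} may fail is contained in $\{(h_{11},h_{12},h_{21},h_{22})\in\Rp^4:\ (h_{22}/h_{12},\,h_{21}/h_{11})\in\Sing^2_{\bm f}(2)\}$.

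Next, exactly as in the proof of Proposition~\ref{ohyes2}, I would introduce the coordinate change $g:\Rp^4\to\Rp^4$, $g(h_{11},h_{12},h_{21},h_{22})=\big(h_{11},h_{12},\,h_{22}/h_{12},\,h_{21}/h_{11}\big)$, which is a $C^1$ diffeomorphism of $\Rp^4$ and hence locally bi-Lipschitz. With this notation $\mathcal E\subseteq g^{-1}\!\big(\Rp^2\times\Sing^2_{\bm f}(2)\big)$. Theorem~\ref{prob3} with $n=2$ gives $\dim\Sing^2_{\bm f}(2)\le 2-\tfrac{2}{2(2+1)}=\tfrac53$; Marstrand's Slicing Lemma (Lemma~\ref{lem3}) then yields $\dim\big(\Rp^2\times\Sing^2_{\bm f}(2)\big)=2+\tfrac53=\tfrac{11}{3}=4-\tfrac13$; and since $g$ and $g^{-1}$ preserve Hausdorff dimension by Lemma~\ref{lem4}, we conclude $\dim\mathcal E\le 4-\tfrac13$, i.e.\ $\mathcal E$ has codimension $\ge\tfrac13$, which is the claim.

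In this reduction the only routine checks are that $\bm f$ and $g$ are diffeomorphisms on their respective domains and that the degenerate configurations excluded above (a vanishing channel gain, or coincidences forcing a ratio to be undefined) form a set of dimension at most $3<\tfrac{11}{3}$, so they do not affect the bound. The genuine content is entirely in Theorem~\ref{prob3}, whose proof is deferred to \S\ref{FR}: the hard part there will be controlling the dimension of the ``twisted'' jointly-singular set $\Sing^2_{\bm f}(n)$ --- the slice of $\Sing^2(n)$ cut out by the graph of a bi-Lipschitz map $\bm f$ --- which I expect to extract from the general linear-forms statement (Theorem~\ref{prob5}) via the Dani correspondence together with a quantitative $\delta$-escape-of-mass estimate in the space of lattices. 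That is where the main difficulty of the whole improvement of Theorem~\ref{MotDOF} is concentrated.
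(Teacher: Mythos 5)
Your argument is correct and is essentially the paper's own proof: Theorem~\ref{MotDOF2} is obtained precisely by feeding the $n=2$ case of Theorem~\ref{prob3} (giving $\dim\Sing^2_{\bm f}(2)\le \frac53$) into Proposition~\ref{thm7}, and transferring the bound to the space of channel coefficients via the same slicing and bi-Lipschitz argument (Lemmas~\ref{lem3} and \ref{lem4}) used for Proposition~\ref{ohyes2}. The only slight inaccuracy is peripheral: the paper proves Theorem~\ref{prob3} directly from Proposition~\ref{prop3} and the Kadyrov--Kleinbock--Lindenstrauss--Margulis covering result (Theorem~\ref{thm4}), not by deducing it from Theorem~\ref{prob5}, but this does not affect your proof of the present statement.
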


Clearly, $\Sing(n)$ is a subset $\Sing^2_{\bm f}(n)$. Therefore, it follows that
 $$\dim\Sing^2_{\bm f}(n)\ge \dim\Sing(n)$$

\noindent which together with  Theorem~\ref{thm6} implies that for $n\ge2$

$$
\dim\Sing^2_{\bm f}(n) \ge  \frac{n^2}{n+1} = n-\frac{n}{n+1}\,.
$$

\noindent The gap between this lower bound and the upper bound of Theorem~\ref{prob3} leaves open the  natural  problem of determining $ \dim\Sing^2_{\bm f}(n)$  precisely.  We suspect that the lower bound is sharp.

\begin{problem}\label{prob4}
Let $n\ge2$ and $\bm f:U\to\R^n$ be a locally bi-Lipschitz map defined on an open subset $U\subset\R^n$. Verify if
$$
\dim\Sing^2_{\bm f}(n)= \frac{n^2}{n+1}  \,.
$$
\end{problem}

\noindent
Note that to improve Theorem~\ref{MotDOF2} we are only interested in the case $n=2$ of Problem~\ref{prob4} with $\bm f$ given by \eqref{eqn51}.

\subsection{Systems of linear forms} \label{FR}

 To date, we have in one form or another exploited  the theory of Diophantine approximation of a single linear form in $n$ real variables.   In fact, Example~1 only really requires the notions and results with  $n=1$ while Example~2 requires them  with  $n=2$.  It is easily seen,  that in either of these examples, if we increase the number of users (transmitters) $S$  then we increase the numbers of variables appearing in the linear form(s) associated with the received message(s) $y$.  Indeed, within the setup of Example~2 (resp. Example~1)  we would need to use the  general  $n$ (resp. $n-1$) variable theory if we had $n$ transmitters.

The majority of the  Diophantine approximation theory for a single linear form is a special case of a general theory addressing  systems of $m$ linear forms in $n$ real variables.  For the sake of completeness, it is appropriate to provide a brief taster of the general Diophantine approximation theory with an emphasis on those aspects used in analysing communication channel models. It should not come as a surprise that the natural starting point is Dircihlet's theorem for systems of linear forms. Throughout, let $n,m \geq 1$ be integers and  $\Mmn$ denote the set of $n\times m$ matrices $\bm\Xi=(\xi_{i,j})  $ with entries from $\R$.   Clearly, such a matrix represents the coordinates of a  point in $\R^{nm}$.  Also, given $(\vvv p,\vvv q) \in  \Z^m \times \Z^{n}$ let
$$
|\vvv q\bm\Xi+\vvv p| :=  \max_{1\le j\le m}| \vvv q .  \bm\xi_j
 +p_j|  \, ,
$$
%$$
%|\vvv q\bm\Xi-\vvv p| :=  \max_{1\le j\le m}|q_1\xi_{1,j} + \ldots + q_n\xi_{n,j} +p_j|  \, .
%$$
where $\bm\xi_j :=(  \xi_{1,j},  \ldots,\xi_{n,j})^t \in \R^n$ is the $j$'th column vector of $\bm\Xi$ and $\vvv q . \bm\xi_j := q_1\xi_{1,j} + \ldots + q_n\xi_{n,j}$ is the standard dot product.

\vspace*{2ex}

\begin{theorem}[Dirichlet's Theorem for systems of linear forms]\label{thmsv3}
For any $\bm\Xi\in \Mmn$ and any $Q\in\N$ there exists  $(\vvv p,\vvv q) \in \Z^m \times \Z^{n}$ such that
\begin{equation*}\label{eqn38sv}
|\vvv q\bm\Xi+\vvv p|  <   Q^{-\frac{n}{m}}    \qquad \textrm{ and }  \qquad  1 \leq  |\vvv q| \leq Q \, .
\end{equation*}
\end{theorem}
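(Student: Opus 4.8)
The plan is to deduce Theorem~\ref{thmsv3} from Minkowski's theorem for systems of linear forms (Theorem~\ref{MTLF}) --- exactly as the classical one-form Dirichlet theorem (Theorem~\ref{Dir}) is obtained from the two-dimensional case of Theorem~\ref{MTLF} --- the point being that the $m$ approximation conditions together with the $n$ size conditions on $\vvv q$ make up a system of $k:=n+m$ linear forms in the $k$ integer unknowns $\vvv a:=(p_1,\dots,p_m,q_1,\dots,q_n)$.

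Concretely, I would set, for $1\le j\le m$ and $1\le\ell\le n$,
\[
F_j(\vvv a):=q_1\xi_{1,j}+\dots+q_n\xi_{n,j}+p_j, \qquad G_\ell(\vvv a):=q_\ell,
\]
and apply Theorem~\ref{MTLF} with $k=n+m$, with bounds $\lambda_j=Q^{-n/m}$ on the forms $F_1,\dots,F_m$ and $\lambda_{m+\ell}=Q$ on the forms $G_1,\dots,G_n$, ordering the $k$ forms so that $F_1,\dots,F_m,G_1,\dots,G_{n-1}$ occupy the first $k-1$ slots (on which Theorem~\ref{MTLF} delivers strict inequalities) and $G_n$ occupies the last slot (on which only ``$\le$'' is guaranteed). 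In this ordering of the unknowns the coefficient matrix of the system is block upper triangular,
\[
\begin{pmatrix} I_m & \bm\Xi^{t}\\ 0 & I_n\end{pmatrix},
\]
so its determinant has absolute value $1$, while $\prod_{i=1}^{k}\lambda_i=\big(Q^{-n/m}\big)^{m}\,Q^{n}=1$. Hence hypothesis~\eqref{eqn34} of Theorem~\ref{MTLF} holds (with equality), and we obtain a non-zero integer vector $(\vvv p,\vvv q)\in\Z^m\times\Z^n$ with $|\vvv q\bm\Xi+\vvv p|=\max_{1\le j\le m}|F_j(\vvv a)|<Q^{-n/m}$, and with $|q_\ell|<Q$ for $1\le\ell\le n-1$ and $|q_n|\le Q$, so that $|\vvv q|\le Q$.

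It then only remains to verify that $\vvv q\neq\vvv 0$. If $\vvv q=\vvv 0$ then each $F_j$ reduces to $p_j$, forcing $|p_j|<Q^{-n/m}\le1$ (since $Q\ge1$) and hence $p_j=0$ for every $j$; this contradicts the non-vanishing of $(\vvv p,\vvv q)$. Therefore $\vvv q\neq\vvv 0$, i.e. $1\le|\vvv q|\le Q$, and the proof is complete.

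I do not expect a genuine obstacle: once Theorem~\ref{MTLF} is available, everything is bookkeeping. The only subtlety is that the determinant condition~\eqref{eqn34} is met only with equality (the product of the bounds is exactly $1$), so one must invoke Theorem~\ref{MTLF} in precisely its stated form --- strict inequalities on all but one of the forms, and ``$\le$'' permitted in~\eqref{eqn34} --- and must arrange that the single non-strict inequality falls on a size form $G_n$ rather than on an approximation form $F_j$; this is exactly why the bounds $\lambda_i$ and the grouping of the forms must be chosen as above. (A naive pigeonhole argument in the spirit of Dirichlet's original proof of Theorem~\ref{Dir} becomes awkward as soon as $m\ge2$, since the relevant boxes are then no longer cubes of a convenient side length; the Minkowski route handles all $n$ and $m$ uniformly, in particular the range $n<m$.)
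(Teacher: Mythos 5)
Your deduction is correct and is exactly the route the paper indicates: it states that Theorem~\ref{thmsv3} is a straightforward consequence of Minkowski's theorem for systems of linear forms (Theorem~\ref{MTLF}), deferring details to Schmidt, and your choice of the $k=n+m$ forms, the bounds $\lambda_j=Q^{-n/m}$ and $\lambda_{m+\ell}=Q$ with the non-strict slot on a size form, the unit determinant, and the final check that $\vvv q\neq\vvv 0$ is precisely that deduction (and matches the $k=3$ instance the paper works out in \S\ref{eg2section}). No gaps.
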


The theorem is a relatively straightforward consequence of Minkowski's theorem for systems of linear forms; namely Theorem \ref{MTLF} in \S\ref{eg2section}.   For the details of the deduction  see for example \cite[Chapter~2]{schmidt1996diophantine}.   In turn, a straightforward consequence of the above theorem is the following natural extension of Corollary~\ref{cor1} to systems of linear form.

\begin{corollary}\label{cor2sv3}
For any $\bm\Xi\in \Mmn$  there exists infinitely many $(\vvv p,\vvv q) \in \Z^m \times \Z^{n}\backslash \{\vvv 0\} $ such that
  \begin{equation*}\label{eqn42sv3}
   |\vvv q\bm\Xi+\vvv p| < |\vvv q|^{-\frac{n}{m}}\,  .
  \end{equation*}
\end{corollary}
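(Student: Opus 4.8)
The plan is to mimic the classical deduction of Corollary~\ref{cor1} from Dirichlet's theorem, now in the linear forms setting, using Theorem~\ref{thmsv3} as the engine. First I would observe that a single application of Theorem~\ref{thmsv3} already produces one solution of the desired inequality: given $Q\in\N$ it yields $(\vvv p,\vvv q)\in\Z^m\times\Z^n$ with $1\le|\vvv q|\le Q$ and $|\vvv q\bm\Xi+\vvv p|<Q^{-n/m}$, and since $|\vvv q|\le Q$ (together with $n/m>0$) gives $|\vvv q|^{-n/m}\ge Q^{-n/m}$, the same pair satisfies $|\vvv q\bm\Xi+\vvv p|<|\vvv q|^{-n/m}$. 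Here $|\vvv q|\ge 1$, so the right-hand side is a well-defined positive real number.

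To upgrade ``one solution'' to ``infinitely many'', I would argue by contradiction. Suppose the inequality $|\vvv q\bm\Xi+\vvv p|<|\vvv q|^{-n/m}$ has only finitely many solutions $(\vvv p_1,\vvv q_1),\dots,(\vvv p_N,\vvv q_N)$ in $\Z^m\times(\Z^n\setminus\{\vvv 0\})$. Before forming the relevant minimum I would dispose of the degenerate possibility that $\vvv q_i\bm\Xi+\vvv p_i=\vvv 0$ for some $i$: in that case the scaled pairs $(k\vvv p_i,k\vvv q_i)$, $k\in\N$, are infinitely many distinct solutions (the left-hand side stays equal to $0$ while $|k\vvv q_i|\to\infty$), contradicting finiteness. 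Hence I may assume $\delta:=\min_{1\le i\le N}|\vvv q_i\bm\Xi+\vvv p_i|>0$.

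Now I would pick $Q\in\N$ large enough that $Q^{-n/m}<\delta$ and apply Theorem~\ref{thmsv3} with this $Q$. The resulting pair $(\vvv p,\vvv q)$ satisfies $1\le|\vvv q|\le Q$ and, exactly as in the first paragraph, $|\vvv q\bm\Xi+\vvv p|<Q^{-n/m}\le|\vvv q|^{-n/m}$, so it is a solution of the corollary's inequality; but $|\vvv q\bm\Xi+\vvv p|<Q^{-n/m}<\delta\le|\vvv q_i\bm\Xi+\vvv p_i|$ for every $i$, so $(\vvv p,\vvv q)$ differs from all of $(\vvv p_1,\vvv q_1),\dots,(\vvv p_N,\vvv q_N)$. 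This is the desired contradiction, which completes the proof.

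As for difficulty, there is essentially no serious obstacle: the only point requiring a moment's care is the degenerate case $\vvv q\bm\Xi+\vvv p=\vvv 0$ (which cannot occur when, for instance, the entries of some column of $\bm\Xi$ together with $1$ are linearly independent over $\Q$, but can occur in general), and this is handled by the scaling remark above. Everything else is the verbatim one-dimensional argument with $1/q^2$ replaced by $|\vvv q|^{-n/m}$ and the constraint $q\le Q$ replaced by $|\vvv q|\le Q$.
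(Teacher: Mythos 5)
Your proof is correct and is exactly the standard deduction the paper has in mind when it calls the corollary a straightforward consequence of Theorem~\ref{thmsv3}: apply the theorem with $Q$ large, use $|\vvv q|\le Q$ to pass from $Q^{-n/m}$ to $|\vvv q|^{-n/m}$, and rule out finiteness by choosing $Q^{-n/m}$ below the minimum of the finitely many values. Your treatment of the degenerate case $\vvv q\bm\Xi+\vvv p=\vvv 0$ by scaling is also consistent with the paper's remark (after Corollary~\ref{cor2}) that no primitivity or irrationality-type hypothesis is imposed, which is precisely why that case causes no trouble.
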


\noindent Armed with Theorem~\ref{thmsv3} and its corollary, it does not require much imagination to extend the single linear  form notions of badly approximable (cf. Definition~\ref{BADdef}) and Dirchlet improvable (cf. Definition~\ref{DIDEF})  to systems of linear forms.  Indeed, concerning the former we arrive at the set
$$
\Bad(n,m):=   \left\{  \bm\Xi \in \Mmn  \  : \  \liminf_{\substack{\vvv q\in\Z^n  : \\[0.2ex] |\vvv q|  \to \infty}} \,  |\vvv q|^{\frac{n}{m}} |\vvv q\bm\Xi-\vvv p| > 0  \right\}  \, .
$$
This clearly coincides with $\Bad(n)$  when $m=1$.  As we shall soon see, $\Bad(n,m)$  it is a set of zero $nm$-dimensional Lebesgue measure.  Even still, Schmidt \cite{MR195595, MR248090}  showed that it is a large set in the sense that it is of maximal dimension; i.e.
$
\dim \Bad(n,m) = nm \, .
$
 Moving swiftly on, given a function $\psi:\R_+\to\R_+$ let
\[
\cW_{n,m}(\psi):=\left\{ \bm\Xi \in \Mmn(\II) :  \begin{array}{l}
|\vvv q\bm\Xi-\vvv p|<\psi(|\vvv q|) \ \ \text{ for}\\[1ex] \text{i.m. }(\vvv p,\vvv q) \in \Z^m \times \Z^{n}\backslash \{\vvv 0\}  \end{array}
\right\}  \, .
\]

\noindent  Here and below, $  \Mmn(\II) \subset  \Mmn$ denotes the set of $n\times m$ matrices  with entries from $\II=(0,1)$.  The following provides an elegant criterion for   the size of the set $\cW_{n,m}(\psi)$ expressed in terms of $nm$-dimensional Lebesgue measure. When $m=1$, it  coincides with Theorem~\ref{KGT}  appearing in \S\ref{sv2.4}.

\begin{theorem}[The Khintchine-Groshev Theorem]\label{KGT2}
Given any monotonic function $\psi:\R_+\to\R_+$, we have that
$$
|\cW_{n,m}(\psi)|_{nm}=\left\{\begin{array}{cl}
0 &\text{if } \ \sum_{q=1}^\infty q^{n-1}\psi(q)^m<\infty\,,\\[2ex]
1 &\text{if } \ \sum_{q=1}^\infty q^{n-1}\psi(q)^m=\infty\,.
                       \end{array}
\right.
$$
\end{theorem}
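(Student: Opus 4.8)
The plan is to establish the two halves of the dichotomy separately; the convergence half is routine, the divergence half carries all the weight. Throughout, for $\vvv q\in\Z^n\setminus\{\vvv 0\}$ write
\[
A_{\vvv q}:=\big\{\bm\Xi\in\Mmn(\II): |\vvv q\bm\Xi-\vvv p|<\psi(|\vvv q|)\ \text{for some }\vvv p\in\Z^m\big\},
\]
so that $\cW_{n,m}(\psi)=\limsup_{|\vvv q|\to\infty}A_{\vvv q}$. \textbf{Convergence.} Inspecting $A_{\vvv q}$ column by column and using that for each $j$ the set $\{\bm\xi_j\in\II^n:\|\vvv q\cdot\bm\xi_j\|<\psi(|\vvv q|)\}$ has $n$-dimensional measure at most $2\psi(|\vvv q|)$ (cf.\ \cite[Lemma~8]{MR548467}), one gets $|A_{\vvv q}|_{nm}\le(2\psi(|\vvv q|))^m$. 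Since $\#\{\vvv q:|\vvv q|=q\}\asymp q^{n-1}$, summing bounds $\sum_{\vvv q}|A_{\vvv q}|_{nm}$ by a constant times $\sum_{q\ge1} q^{n-1}\psi(q)^m<\infty$, so the Borel--Cantelli lemma gives $|\cW_{n,m}(\psi)|_{nm}=0$; no monotonicity is needed here, and this is exactly the argument behind Corollary~\ref{cor3}.

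\textbf{Divergence.} First reduce to positivity of measure: being a $\limsup$ set invariant (up to null sets) under rational translations, $\cW_{n,m}(\psi)$ obeys a zero--one law of Cassels--Gallagher type, so it suffices to show $|\cW_{n,m}(\psi)|_{nm}>0$. Using monotonicity of $\psi$ we may pass to a dyadic decomposition and assume $\psi(q)\ll q^{-n/m}$ (otherwise $\cW_{n,m}(\psi)=\II^{nm}$ already, by Theorem~\ref{thmsv3}). The engine is the divergence Borel--Cantelli lemma: if $\sum_{\vvv q}|A_{\vvv q}|_{nm}=\infty$ and the $A_{\vvv q}$ are quasi-independent on average, i.e.
\[
\sum_{|\vvv q|,|\vvv q'|\le N}|A_{\vvv q}\cap A_{\vvv q'}|_{nm}\ \ll\ \Big(\sum_{|\vvv q|\le N}|A_{\vvv q}|_{nm}\Big)^{2}\quad\text{for infinitely many }N,
\]
then $|\limsup_{\vvv q}A_{\vvv q}|_{nm}>0$. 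Divergence of $\sum_{\vvv q}|A_{\vvv q}|_{nm}$ is immediate from $|A_{\vvv q}|_{nm}\gg\psi(|\vvv q|)^m$ and the hypothesis $\sum q^{n-1}\psi(q)^m=\infty$, so the whole problem collapses to the overlap estimate.

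\textbf{The overlap estimate (the hard part).} For $\vvv q\ne\pm\vvv q'$ one wants $|A_{\vvv q}\cap A_{\vvv q'}|_{nm}\ll |A_{\vvv q}|_{nm}\,|A_{\vvv q'}|_{nm}+\mathcal{E}(\vvv q,\vvv q')$, with the errors $\mathcal{E}$ summing over a dyadic block to at most a constant times the square of the main term. Column by column this reduces to estimating the measure of $\{\bm\xi\in\II^n:\|\vvv q\cdot\bm\xi\|<\psi,\ \|\vvv q'\cdot\bm\xi\|<\psi'\}$, which by Fourier expansion (or a direct lattice-point count) equals $(2\psi)(2\psi')$ plus a term governed by the geometry of the rank-two integer lattice spanned by $\vvv q$ and $\vvv q'$ — in effect by the length of the shortest nonzero relation $a\vvv q+b\vvv q'$; the ``bad'' pairs are those for which this lattice is degenerate, and controlling their total contribution is the crux of Khintchine--Groshev and the step I expect to be the main obstacle. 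It is precisely what Schmidt's method \cite{MR118711} (which for $n\ge2$ even dispenses with monotonicity) and the ubiquity machinery of \cite{MR2184760} are designed to handle. A cleaner route for the entire divergence case is to verify that the affine resonant subspaces $R_{\vvv q}:=\{\bm\Xi\in\II^{nm}:\vvv q\bm\Xi\in\Z^m\}$, each of codimension $m$, form a \emph{ubiquitous system}: Theorem~\ref{thmsv3} places every $\bm\Xi$ within $\asymp Q^{-n/m-1}$ of some $R_{\vvv q}$ with $|\vvv q|\le Q$, and a counting argument upgrades this to a positive proportion of the admissible $\vvv q$; feeding ubiquity together with $\sum q^{n-1}\psi(q)^m=\infty$ into the general $\limsup$ theorem of Beresnevich--Dickinson--Velani then yields $|\cW_{n,m}(\psi)|_{nm}=1$ directly.

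Finally, a word on the hypotheses. Monotonicity enters only in the divergence direction: in the diagonal case $n=m=1$ it cannot be dropped (the Duffin--Schaeffer counterexample), while for $n\ge2$ it can, by \cite{MR118711} when $n\ge3$ and by \cite{MR2576284} when $n=2$. The case $n=m=1$ also admits Khintchine's original continued-fraction proof, which sidesteps the lattice-point estimates above altogether.
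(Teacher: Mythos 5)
The paper does not actually prove this statement: Theorem~\ref{KGT2} is quoted as a classical result (Groshev for the general case, with the remarks around Theorem~\ref{KGT} pointing to Schmidt \cite{MR118711}, \cite{MR2576284} and Duffin--Schaeffer for the role of monotonicity), so there is no internal proof to measure your argument against. Judged on its own terms, your convergence half is complete and correct in outline: the column-by-column measure bound $|A_{\vvv q}|_{nm}\le(2\psi(|\vvv q|))^m$, the count $\#\{\vvv q:|\vvv q|=q\}\asymp q^{n-1}$ and first Borel--Cantelli is exactly the computation the paper carries out for its effective statement (Theorem~\ref{eff}), and indeed needs no monotonicity.

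The divergence half, however, is a plan rather than a proof. The reduction to positive measure via a zero--one law is legitimate (the paper's reference \cite{MR2457266} covers precisely this setting), and the dyadic reduction to $\psi(q)\ll q^{-n/m}$ is standard. But the decisive input --- the quasi-independence-on-average estimate for the pair overlaps $|A_{\vvv q}\cap A_{\vvv q'}|_{nm}$, or equivalently the verification that the resonant sets $R_{\vvv q}$ form a ubiquitous system with ubiquity function $\rho(Q)\asymp Q^{-\frac{n}{m}-1}$ --- is exactly the content of the Khintchine--Groshev theorem, and you do not carry it out; you name it as the expected obstacle and delegate it to Schmidt's method \cite{MR118711} and the framework of \cite{MR2184760}. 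In the ubiquity route the gap is localised in the phrase ``a counting argument upgrades this to a positive proportion of the admissible $\vvv q$'': Dirichlet (Theorem~\ref{thmsv3}) only gives $\operatorname{dist}(\bm\Xi,R_{\vvv q})\ll Q^{-n/m}/|\vvv q|$ for \emph{some} $|\vvv q|\le Q$, and one must show that, outside a set of measure tending to $0$, the denominator can be taken with $|\vvv q|\gg Q$ (i.e.\ control the measure of the set of $\bm\Xi$ whose Dirichlet solutions all have small $|\vvv q|$) before the Beresnevich--Dickinson--Velani limsup theorem applies; this is a genuine measure/counting estimate, not a formality. If citing the literature for that step is acceptable --- and the paper itself does no more --- your sketch is an accurate and honest roadmap; as a self-contained proof it has this one real gap, which to your credit you identify yourself.
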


\bigskip

\noindent   Consider for the moment the function $\psi(r) =  r^{-\frac{n}{m}}  (\log r)^{-m} $ and observe that
$$
\Bad(n,m) \cap   \Mmn(\II)  \ \subset  \ \Mmn(\II)  \setminus    \cW_{n,m}(\psi) \, .
$$
By Theorem~\ref{KGT2}, $|\cW_{n,m}(\psi)|_{nm} = 1 $. Thus $|\Mmn(\II) \setminus \cW_{n,m}(\psi)|_{nm} = 0 $
and on using the fact that set $\Bad(n,m)$ is invariant under translation by integer $n\times m$ matrices,  it follows that
$$
|\Bad(n,m)|_{nm} = 0  \, .
$$

\noindent  Another immediate consequence of the Khintchine-Groshev Theorem is the following statement (cf. Corollary~\ref{cor3}).

\begin{corollary}\label{cor4}
Let  $\psi:\Rp\to\Rp$ be any function such that
$$
  \sum_{q=1}^\infty q^{n-1}\psi(q)^m<\infty\,.
$$
Then, for almost all $\bm\Xi \in \Mmn$ there exists a constant $\kappa (\bm\Xi)  >0 $  such that
$$
 |\vvv q\bm\Xi+\vvv p | \ >  \  \kappa(\bm\Xi)  \,   \psi(|\vvv q|)  \qquad   \forall  \   \  (\vvv p,\vvv q) \in \Z^m \times \Z^{n}\backslash \{\vvv 0\} \, .
$$
\end{corollary}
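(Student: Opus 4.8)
The plan is to deduce Corollary~\ref{cor4} from the \emph{convergence half} of the Khintchine--Groshev Theorem (Theorem~\ref{KGT2}) by the same ``excise a few more null sets and absorb finitely many exceptions'' argument that proves Corollary~\ref{cor3}. First I would reduce to the fundamental domain: for any integer matrix $M$ one has $|\vvv q(\bm\Xi+M)+\vvv p|=|\vvv q\bm\Xi+(\vvv qM+\vvv p)|$, and $\vvv qM+\vvv p$ ranges over $\Z^m$ as $\vvv p$ does, so $\min_{\vvv p\in\Z^m}|\vvv q\bm\Xi+\vvv p|$ — and hence the validity of the conclusion of the corollary — is invariant under translating $\bm\Xi$ by elements of $\Z^{nm}$. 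It therefore suffices to exhibit a full-measure subset of $\Mmn(\II)$ on which the conclusion holds.

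Since $\sum_q q^{n-1}\psi(q)^m<\infty$, the convergence case of Theorem~\ref{KGT2} gives $|\cW_{n,m}(\psi)|_{nm}=0$; as recorded in the remark following Theorem~\ref{KGT}, the convergence case requires no monotonicity of $\psi$, which matches the hypothesis of the corollary. Hence for every $\bm\Xi\in\Mmn(\II)\setminus\cW_{n,m}(\psi)$ there are only finitely many pairs $(\vvv p_1,\vvv q_1),\dots,(\vvv p_k,\vvv q_k)\in\Z^m\times(\Z^n\setminus\{\vvv 0\})$ with $|\vvv q_i\bm\Xi+\vvv p_i|<\psi(|\vvv q_i|)$, and each such $\vvv q_i$ has $\psi(|\vvv q_i|)>0$ automatically (the left-hand side is non-negative). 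For \emph{every other} pair $(\vvv p,\vvv q)$ with $\vvv q\neq\vvv 0$ one has $|\vvv q\bm\Xi+\vvv p|\ge\psi(|\vvv q|)$, which exceeds $\kappa\psi(|\vvv q|)$ for any $\kappa<1$ provided $\psi(|\vvv q|)>0$. The only degeneracies left to rule out are that some $|\vvv q_i\bm\Xi+\vvv p_i|$ vanishes or that $\psi(|\vvv q|)=0$ for some $\vvv q$; both are dispatched by additionally discarding the Lebesgue-null set
$$
\mathcal{H}:=\bigcup_{\vvv q\in\Z^n\setminus\{\vvv 0\}}\ \bigcup_{\vvv p\in\Z^m}\bigl\{\bm\Xi\in\Mmn(\II):\vvv q\bm\Xi+\vvv p=\vvv 0\bigr\},
$$
a countable union of proper affine subspaces, so that $|\vvv q\bm\Xi+\vvv p|>0$ whenever $\vvv q\neq\vvv 0$ and $\bm\Xi\notin\mathcal{H}$.

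Finally, for $\bm\Xi\in\Mmn(\II)\setminus(\cW_{n,m}(\psi)\cup\mathcal{H})$ I would put
$$
\kappa(\bm\Xi):=\min\Bigl\{\tfrac12,\ \min_{1\le i\le k}\frac{|\vvv q_i\bm\Xi+\vvv p_i|}{2\,\psi(|\vvv q_i|)}\Bigr\}>0,
$$
and verify the required inequality in two cases: for the finitely many pairs $(\vvv p_i,\vvv q_i)$ one has $|\vvv q_i\bm\Xi+\vvv p_i|\ge 2\kappa(\bm\Xi)\psi(|\vvv q_i|)>\kappa(\bm\Xi)\psi(|\vvv q_i|)$; for any remaining $(\vvv p,\vvv q)$ with $\vvv q\neq\vvv 0$, either $\psi(|\vvv q|)>0$ and $|\vvv q\bm\Xi+\vvv p|\ge\psi(|\vvv q|)>\kappa(\bm\Xi)\psi(|\vvv q|)$, or $\psi(|\vvv q|)=0$ and $|\vvv q\bm\Xi+\vvv p|>0=\kappa(\bm\Xi)\psi(|\vvv q|)$. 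Translating back through $\Z^{nm}$, as in the first step, promotes this full-measure statement from $\Mmn(\II)$ to all of $\Mmn$. I do not expect any genuine obstacle: the whole content sits in the convergence half of Theorem~\ref{KGT2}, and the remaining work is the routine handling of finitely many exceptional linear-form values, the one mild subtlety being that a few of these values can degenerate to $0$ on rational hyperplanes, which is exactly why the auxiliary null set $\mathcal{H}$ must be removed in addition to $\cW_{n,m}(\psi)$.
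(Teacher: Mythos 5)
Your argument is correct and is essentially the derivation the paper intends: the paper states Corollary~\ref{cor4} as an immediate consequence of the convergence case of the Khintchine--Groshev Theorem, which is exactly your route (finitely many exceptional pairs off a null set, absorb them into $\kappa(\bm\Xi)$ after discarding the rational-hyperplane null set, and use translation invariance to pass from $\Mmn(\II)$ to $\Mmn$). Your appeal to the Borel--Cantelli remark to drop monotonicity of $\psi$ in the convergence half is also the right justification for the ``any function'' hypothesis.
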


\bigskip

The following is the natural generalisation of the set given by \eqref{eqn61} to systems of linear forms and the subsequent statement is the natural generalisation of Theorem~\ref{EKG}. Let

\begin{equation} \label{eq61+}
\mathcal{B}_{n,m}(\psi,\kappa):=\left\{ \bm\Xi \in \Mmn(\II) :  \begin{array}{l}
|\vvv q\bm\Xi+\vvv p|  > \kappa\psi(|\vvv q|)  \\[1ex] ~ \  \ \forall   \  \ (\vvv p,\vvv q) \in \Z^m \times \Z^{n}\backslash \{\vvv 0\}
                                                     \end{array}
\right\}.
\end{equation}

\begin{theorem}[Effective convergence Khintchine-Groshev Theorem]\label{eff}
Suppose that
$$
\sum_{q=1}^\infty q^{n-1}\psi(q)^m<\infty\,.
$$
Then, for any $\kappa>0$
$$
{\bf Prob}(\mathcal{B}_{n,m}(\psi,\kappa))\ge1-2^mn\kappa^m\sum_{q=1}^\infty (2q+1)^{n-1}\psi(q)^m\,.
$$
\end{theorem}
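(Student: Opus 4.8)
The plan is to run the proof of Theorem~\ref{EKG} (the case $m=1$) essentially verbatim, the one new ingredient being that $\Mmn(\II)$ factors as a product of $m$ copies of $\II^n$, one for each column, so that the $m$ linear forms decouple. First I would identify the complement: by definition
$$
\Mmn(\II)\setminus\mathcal{B}_{n,m}(\psi,\kappa)=\bigcup_{\vvv q\in\Z^n\setminus\{\vvv 0\}}E_{\vvv q},\qquad E_{\vvv q}:=\big\{\bm\Xi\in\Mmn(\II):|\vvv q\bm\Xi+\vvv p|\le\kappa\psi(|\vvv q|)\text{ for some }\vvv p\in\Z^m\big\}.
$$
Since $|\vvv q\bm\Xi+\vvv p|=\max_{1\le j\le m}|\vvv q\cdot\bm\xi_j+p_j|$ and the integers $p_1,\dots,p_m$ may be chosen independently, $\bm\Xi\in E_{\vvv q}$ holds exactly when every column $\bm\xi_j$ lies in $F_{\vvv q}:=\{\bm\xi\in\II^n:|\vvv q\cdot\bm\xi+p|\le\kappa\psi(|\vvv q|)\text{ for some }p\in\Z\}$. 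Hence $E_{\vvv q}$ is the $m$-fold Cartesian product of $F_{\vvv q}$ with itself, and by the product measure $|E_{\vvv q}|_{nm}=|F_{\vvv q}|_n^{\,m}$.

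Next I would bound $|F_{\vvv q}|_n$. This is exactly the computation behind \cite[Lemma~8]{MR548467} already used in the proof of Theorem~\ref{EKG}: picking a coordinate $\xi_k$ with $q_k\neq0$ and applying Fubini, the slice of $F_{\vvv q}$ in the $\xi_k$-direction is a union of intervals of length $2\kappa\psi(|\vvv q|)/|q_k|$ spaced $1/|q_k|$ apart, of total length $\le 2\kappa\psi(|\vvv q|)$; integrating over the remaining $n-1$ coordinates gives $|F_{\vvv q}|_n\le 2\kappa\psi(|\vvv q|)$, and therefore $|E_{\vvv q}|_{nm}\le\big(2\kappa\psi(|\vvv q|)\big)^m=2^m\kappa^m\psi(|\vvv q|)^m$.

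Finally I would apply the union bound, folding $\vvv q$ and $-\vvv q$ together (note $E_{\vvv q}=E_{-\vvv q}$), so that it suffices to sum over the ``positive half'' $\Z^n_+$ of non-zero integer vectors (those with first non-zero coordinate positive, say), whose number of points with $|\vvv q|=q$ equals $\tfrac12\big((2q+1)^n-(2q-1)^n\big)\le n(2q+1)^{n-1}$ by the mean value theorem applied to $x\mapsto x^n$. Grouping by $|\vvv q|=q$ then yields
$$
{\bf Prob}(\mathcal{B}_{n,m}(\psi,\kappa))=1-\Big|\bigcup_{\vvv q\in\Z^n_+}E_{\vvv q}\Big|_{nm}\ge 1-\sum_{\vvv q\in\Z^n_+}|E_{\vvv q}|_{nm}\ge 1-2^m n\kappa^m\sum_{q=1}^\infty(2q+1)^{n-1}\psi(q)^m,
$$
which is the asserted estimate. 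The convergence hypothesis $\sum_q q^{n-1}\psi(q)^m<\infty$ enters only to ensure the right-hand series converges, so that the bound is non-vacuous (and, letting $\kappa\to0$, that $|\mathcal{B}_{n,m}(\psi,\kappa)|_{nm}\to1$, the quantitative form of the convergence Khintchine--Groshev theorem). The proof presents no genuine obstacle; the only steps needing a little care are the slice estimate for $F_{\vvv q}$ and remembering the $\vvv q\leftrightarrow-\vvv q$ identification, which is precisely what makes the constant $2^m n$ rather than a cruder $2^{m+1}n$.
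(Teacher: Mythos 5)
Your proposal is correct, and it is exactly the natural argument: the paper does not actually print a proof of Theorem~\ref{eff}, but the intended route is the one you take, namely the proof of Theorem~\ref{EKG} run column by column. Your two additions are precisely the ones required. First, the factorisation $E_{\vvv q}=F_{\vvv q}\times\dots\times F_{\vvv q}$, hence $|E_{\vvv q}|_{nm}=|F_{\vvv q}|_n^{\,m}\le\big(2\kappa\psi(|\vvv q|)\big)^m$, where the bound $|F_{\vvv q}|_n\le 2\kappa\psi(|\vvv q|)$ is the Sprind\v{z}uk computation (\cite[Lemma~8]{MR548467}) already invoked in the proof of Theorem~\ref{EKG}; your statement of it as an inequality rather than an equality is the safe formulation (equality needs $2\kappa\psi(|\vvv q|)\le1$, but only $\le$ is used). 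Second, the identification $E_{\vvv q}=E_{-\vvv q}$ so that one sums over a half-space of $\vvv q$'s, with $\tfrac12\big((2q+1)^n-(2q-1)^n\big)\le n(2q+1)^{n-1}$ vectors of norm $q$; this step is genuinely needed to reach the stated constant $2^m n$, since without it one gets $2^{m+1}n$ (indeed the paper's own $m=1$ statement, Theorem~\ref{EKG}, carries the cruder constant $4n$ because its proof does not fold $\vvv q$ with $-\vvv q$, whereas the same symmetrisation is used in \S\ref{probasn}). So your write-up not only proves the theorem but explains why its constant improves on the $m=1$ special case as printed. No gaps.
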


\noindent We highlight the fact that the probability in Theorem~\ref{eff} is assumed to be uniform but it is possible to obtain a version for absolutely continuous distributions as already mentioned in Remark~\ref{vb3978}.
Recall, that the  Khintchine-Groshev theorem (with $m=1$ and $n=2$) underpinned the approach taken in \S\ref{sv2.4} for calculating the Degrees of Freedom of the two-user $X$-channel (cf. Theorem~\ref{MotDOF}).

We bring our selective overview of the general  Diophantine  approximation theory to a close by describing  singular and jointly singular sets for  systems of linear forms.  In the process we shall prove  Theorem~\ref{prob1} and  Theorem~\ref{prob3}.  Recall, that the latter allows us to improve Theorem~\ref{MotDOF}. For ease of comparison, it is convenient  to define the sets of interest as follows:
\begin{equation*}
\Sing(n,m):=\left\{ \bm\Xi \in \Mmn :  \,  \begin{array}{l}
\displaystyle{\min_{\substack{(\vvv p,\vvv q) \in \Z^m \times \Z^{n} \!: \\[0.2ex]0<|\vvv q|\le Q}}} \ \max_{1\le j\le m}Q^{\frac{n}{m}} |\vvv q.\bm\xi_j + p_j |\to 0     \\[-1ex] ~ \hspace*{20ex}  \text{ as \ }Q\to\infty
                                                     \end{array}
\right\}
\end{equation*}
and
\begin{equation} \label{tired}
\Sing^m(n):=\left\{ \bm\Xi \in \Mmn  :  \, \begin{array}{l}
\displaystyle{\min_{\substack{(\vvv p,\vvv q) \in \Z^m \times \Z^{n} \!: \\[0.2ex]0<|\vvv q|\le Q}}} \  \min_{1\le j\le m}Q^{n} |\vvv q.\bm\xi_j + p_j |\to 0     \\[-1ex] ~ \hspace*{20ex}  \text{ as \ }Q\to\infty
                                                     \end{array}
\right\}.
\end{equation}

Clearly, when $m=1$ the above two sets are equal and the elements coincide with the single linear form notion of singular points (cf. Definition~\ref{def5}).   In recent groundbreaking  work \cite{das2019variational},  Das,  Fishman, Simmons $\&$ Urba\'{n}ski   proved the following dimension statement (cf. Theorem~\ref{thm6}) for the set of singular $n \times m$ matrices:    {\em for all $(n,m) \neq (1,1)$, we have that}
\begin{equation*} \label{dsfu}
\dim\Sing(n,m)  \, =  \,  mn \left( 1 - \frac{1}{m+n}  \right)  \, .
\end{equation*}
This resolved a conjecture of Kadyrov, Kleinbock, Lindenstrauss $\&$ Margulis \cite{MR3736492}. In short,  they  showed  that $ \dim\Sing(n,m)  \le  mn ( 1 - 1/(m+n) ) $ and conjectured that their upper bound is in fact sharp.

Regarding the set of jointly singular $n \times m$ matrices, it is clear that when $m=2$ its elements coincide with the single linear form notion of jointly singular points (cf. Definition~\ref{def6}).  Furthermore, it follows from the definition that  for any integers $m_1,m_2 \geq 1$
\begin{equation*}
\Sing^{m_1}(n)\times\R^{n \times m_2}  \, \subset  \, \Sing^{m_1+m_2}(n)\,.
\end{equation*}
This together with Marstrand's Slicing Lemma  and the fact  $\Sing^1(n)=\Sing(n)$,  implies that
\begin{equation}\label{eqn+1}
\dim\Sing^m(n)   \ge (m-1)n+   \dim \Sing(n)  \, .
\end{equation}

\noindent In turn, this together with Theorem~\ref{thm6}, implies that for $ n \ge 2$
\begin{equation}  \label{yes}
\dim\Sing^m(n)   \ge  nm-\frac{n}{(n+1)}  \, .
\end{equation}
The following statement showing that we  have equality in \eqref{yes} is a natural generalisation of Theorem~\ref{prob1} to systems of linear forms.

\begin{theorem} \label{prob5}
Let $m\ge1$, $n \geq 2$. Then
\begin{equation} \label{niceone}
\dim\Sing^m(n)= nm-\frac{n}{(n+1)}\,.
\end{equation}
\end{theorem}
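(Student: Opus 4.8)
The lower bound $\dim\Sing^m(n)\ge nm-\tfrac{n}{n+1}$ is already contained in \eqref{yes}, so the plan is to prove the matching upper bound
$$
\dim\Sing^m(n)\le nm-\frac{n}{n+1}\,;
$$
together these give \eqref{niceone}.

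The first step is to recast joint singularity dynamically. Writing $\bm\Xi\in\Mmn$ through its columns $\bm\xi_1,\dots,\bm\xi_m\in\R^n$, put $X=\mathrm{SL}_{n+1}(\R)/\mathrm{SL}_{n+1}(\Z)$, $g_t=\mathrm{diag}(e^t,\dots,e^t,e^{-nt})$, $u_{\bm\xi}=\left(\begin{smallmatrix}I_n&\bm\xi\\0&1\end{smallmatrix}\right)$, and associate with $\bm\Xi$ the point $\Lambda(\bm\Xi)=(u_{\bm\xi_1}\Z^{n+1},\dots,u_{\bm\xi_m}\Z^{n+1})\in X^m$. By the Dani correspondence, solvability of $\|\vvv q\cdot\bm\xi_j\|<\varepsilon Q^{-n}$ in $1\le|\vvv q|\le Q$ is equivalent to $g_t(u_{\bm\xi_j}\Z^{n+1})$ entering the $\delta(\varepsilon)$-thin part of $X$, with $\delta(\varepsilon)\to0$ as $\varepsilon\to0$, at the time $t\asymp\log Q$. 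Combining this with Mahler's compactness criterion and the fact that the sets $K_\eta^m$, $K_\eta:=\{\Lambda\in X:\lambda_1(\Lambda)\ge\eta\}$, exhaust the compacta of $X^m$, one obtains
$$
\bm\Xi\in\Sing^m(n)\iff\{(g_t,\dots,g_t)\cdot\Lambda(\bm\Xi):t\ge0\}\text{ diverges in }X^m\,.
$$
Since $\bm\Xi\mapsto\Lambda(\bm\Xi)$ is bi-Lipschitz on bounded sets, the dimension of $\Sing^m(n)$ equals that of the corresponding set of divergent orbits; in particular $\Sing^m(n)$ is contained, under this identification, in the set $E$ of starting points whose orbit eventually leaves $K_{\eta_0}^m$ for one fixed small $\eta_0>0$ (a countable union over the threshold time, each piece treated identically below), and it suffices to bound $\dim E$.

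The core of the argument is a multiscale covering count à la Kadyrov--Kleinbock--Lindenstrauss--Margulis \cite{MR3736492}. Fix a large $T>0$; using the translation invariance of $\Sing^m(n)$ I work inside a bounded fundamental piece of $\Mmn\cong\R^{nm}$ and build nested covers $\mathcal C_N$ of $E$ by boxes of side $e^{-(n+1)NT}$, chosen so that under $g_{NT}$ such a box becomes a unit box along the expanding horosphere. For the refinement step: if the orbit of a point $\bm\Xi$ in a box $Q\in\mathcal C_N$ is still outside $K_{\eta_0}^m$ at time $(N+1)T$, then in some coordinate $j_0\in\{1,\dots,m\}$ the lattice $g_{(N+1)T}u_{\bm\xi_{j_0}}\Z^{n+1}$ leaves $K_{\eta_0}$. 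There are $m$ choices for $j_0$; for each of the $m-1$ coordinates $j\ne j_0$ I use the trivial bound $e^{(n+1)Tn}$ for the number of side-$e^{-(n+1)(N+1)T}$ sub-boxes needed in the corresponding $\R^n$-block; and for the $j_0$-th block I invoke the KKLM covering estimate, which bounds the number of such sub-boxes meeting the set where $g_{(N+1)T}u_{\bm\xi_{j_0}}\Z^{n+1}$ escapes $K_{\eta_0}$ by $C\,e^{(n+1)Tn-Tn}=C\,e^{Tn^2}$, with $C=C(n,\eta_0)$ --- this is precisely the estimate that yields $\dim\Sing(n)\le n^2/(n+1)$ in the case $m=1$, and it is uniform in the underlying base lattice because the flow on $X^m$ is a diagonal product, so that the dynamics in the $j_0$-th coordinate ignores the remaining ones. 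Hence each box of $\mathcal C_N$ refines into at most $mC\,e^{(n+1)Tnm-Tn}$ boxes of $\mathcal C_{N+1}$, so that $|\mathcal C_N|\le|\mathcal C_0|\,(mC)^N e^{((n+1)nm-n)TN}$ while each box of $\mathcal C_N$ has side $e^{-(n+1)NT}$. The resulting Hausdorff $s$-sums over $\mathcal C_N$ therefore vanish in the limit for every $s>\tfrac{\log(mC)}{(n+1)T}+nm-\tfrac{n}{n+1}$, whence
$$
\dim\Sing^m(n)\le\dim E\le\frac{\log(mC)}{(n+1)T}+nm-\frac{n}{n+1}\,,
$$
and letting $T\to\infty$ (with $C$ depending only on $n$ and $\eta_0$) gives the upper bound.

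I expect the main obstacle to lie in the single-coordinate input rather than in the product structure: one has to extract from \cite{MR3736492} the covering estimate in the box-localised, self-similar form used above, with a constant uniform in the base lattice (for $\eta_0$ bounded away from $0$) and not growing with $T$ --- this uniformity, valid even when the base lattice is itself deep in the cusp, is the genuine content of the $\delta$-escape-of-mass machinery. By contrast the product bookkeeping is harmless: the $m$ choices of escaping coordinate contribute only the factor $(mC)^N$, which washes out as $T\to\infty$. Finally, the hypothesis $n\ge2$ enters through \eqref{yes}, which rests on the Cheung--Chevallier value $\dim\Sing(n)=n^2/(n+1)$; for $n=1$ one has $\Sing(1)=\Q$ and the lower bound argument degenerates, so that case falls outside the scope of the theorem.
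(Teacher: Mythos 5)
Your overall strategy (lower bound from \eqref{yes} via slicing and Cheung--Chevallier, upper bound via the Dani-type correspondence and the Kadyrov--Kleinbock--Lindenstrauss--Margulis covering machinery) is the same as the paper's, but the execution of the upper bound has a genuine gap at exactly the point you flag as "the main obstacle". Your recursion needs a one-step, box-localised covering estimate that is uniform in the base lattice: for an \emph{arbitrary} lattice $\Lambda$, the set of $\bm\xi$ in a unit box with $g_T u_{\bm\xi}\Lambda\notin K_{\eta_0}$ should be covered by $\ll e^{((n+1)n-n)T}$ boxes of side $e^{-(n+1)T}$, with the implied constant independent of $\Lambda$ and of $T$. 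This statement is false. If $\Lambda$ is already deep in the cusp --- say it contains a nonzero vector of norm at most a constant times $\eta_0$ multiplied by the reciprocal of the maximal expansion factor of $g_T$ --- then $g_Tu_{\bm\xi}\Lambda\notin K_{\eta_0}$ for \emph{every} $\bm\xi$ in the box (the unipotent distorts by a bounded amount and $g_T$ expands by a bounded exponential factor), so the escaping set is the whole box and the trivial count $e^{(n+1)nT}$ cannot be improved. This is not an edge case in your scheme: along a (jointly) singular trajectory the relevant base lattices are eventually arbitrarily deep in the cusp at \emph{all} large times, so the bad situation is the typical one in your refinement step, and the factor-per-step bound $mC\,e^{(n+1)Tnm-Tn}$ does not follow. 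This failure is precisely why \cite[Theorem 1.5]{MR3736492} is not a one-step estimate but a statement about the set of points whose trajectory spends a given \emph{proportion} $\delta$ of the first $N$ steps outside $K_\ve$ (with the $s^{3N}$ loss, and with $\ve$ depending on $s$, another point where your "fix $\eta_0$ first, then let $T\to\infty$" set-up does not match what the theorem supplies); the depth-in-the-cusp bookkeeping that your recursion omits is carried there by the height-function/escape-of-mass argument, and it cannot be "extracted" in the uniform self-similar form you assume.

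The paper's proof of Theorem~\ref{prob5+} shows how to use the KKLM input as a black box without any such uniform one-step estimate: for a jointly singular $\bm\Xi$ one records, for each column $j$, the proportion $\delta_j$ of the times $s\ell$, $1\le\ell\le N$, at which the $j$-th orbit escapes $K_\ve$; the defining property \eqref{vb700} forces $\delta_1+\dots+\delta_m\ge 1-\tfrac{m+1}{s}$ (Proposition~\ref{prop3}), and then one covers each factor $Z(\ve,N,s,\delta_j)$ by Theorem~\ref{thm4} and multiplies, the sum of the $\delta_j$'s producing the total saving $e^{-nsN(1-\frac{m+1}{s})}$ that your per-step choice of an escaping coordinate was meant to produce. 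If you want to salvage your argument, you should replace the step-by-step refinement by this proportion-of-escape-times argument (or else reprove a localised estimate that keeps track of the height of the base lattice, which is essentially redoing the KKLM proof).
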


Clearly, when $m=2$ the theorem coincides with Theorem~\ref{prob1}.  In view of \eqref{yes}, the key to establishing Theorem~\ref{prob5} (and  thus Theorem~\ref{prob1})  is  the following upper bound statement.

\begin{theorem} \label{prob5+}
Let $m,n\ge1$. Then
\begin{equation} \label{niceone+}
\dim\Sing^m(n)\le nm-\frac{n}{(n+1)}\,.
\end{equation}
\end{theorem}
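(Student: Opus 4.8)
The plan is to reformulate the statement as a bound on the Hausdorff dimension of a set of divergent trajectories in a product of spaces of lattices, and then feed in the Kadyrov--Kleinbock--Lindenstrauss--Margulis estimate on the $\delta$-escape of mass \cite{MR3736492}, in the sharp form of Das, Fishman, Simmons and Urba\'{n}ski \cite{das2019variational}. Put $X:=\operatorname{SL}_{n+1}(\R)/\operatorname{SL}_{n+1}(\Z)$, let $a_t:=\operatorname{diag}(e^{nt},e^{-t},\dots,e^{-t})\in\operatorname{SL}_{n+1}(\R)$, and for $\bm\xi\in\R^n$ let $u_{\bm\xi}\in\operatorname{SL}_{n+1}(\R)$ be the unipotent with first row $(1,\xi_1,\dots,\xi_n)$ and the identity below it, so that $\{u_{\bm\xi}:\bm\xi\in\R^n\}$ is precisely the expanding horospherical subgroup of $a_1$. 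For $\bm\Xi\in\Mmn$ with columns $\bm\xi_1,\dots,\bm\xi_m$ set $\Phi(\bm\Xi):=\bigl(u_{\bm\xi_1}\Z^{n+1},\dots,u_{\bm\xi_m}\Z^{n+1}\bigr)\in X^m$, and let $\widetilde a_t:=(a_t,\dots,a_t)$ act diagonally on $X^m$. Unwinding the definition \eqref{tired} with $Q=e^t$ yields the joint analogue of the Dani correspondence: $\bm\Xi\in\Sing^m(n)$ if and only if for every $\eta>0$ there is $t_0$ such that for all $t\ge t_0$ at least one of the lattices $a_tu_{\bm\xi_j}\Z^{n+1}$ has a non-zero vector of length $<\eta$. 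Since, by Mahler's compactness criterion applied coordinatewise, the sets $\mathcal K_\ell:=(X_{\ge1/\ell})^m$ (where $X_{\ge\eta}:=\{\Lambda\in X:\Lambda\text{ has no non-zero vector of length }<\eta\}$) form a cofinal family of compact subsets of $X^m$, this is precisely the assertion that the trajectory $(\widetilde a_t\Phi(\bm\Xi))_{t\ge0}$ is \emph{divergent} in $X^m$. Finally $\Phi$ restricts, on any bounded region, to a bi-Lipschitz map onto a piece of a single $nm$-dimensional expanding horospherical leaf, so by Lemma~\ref{lem4} it is enough to bound the dimension of the divergent-trajectory set inside that leaf.

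The next step is to extract an ``escape budget'' and invoke the escape-of-mass estimate. The point is that the cusp of $X^m$ is \emph{stratified}: the complement of $\mathcal K_\ell$ is the union over non-empty $J\subseteq\{1,\dots,m\}$ of the faces $\mathcal F_J:=\{(\Lambda_1,\dots,\Lambda_m):\Lambda_j\notin X_{\ge1/\ell}\text{ for all }j\in J\}$, and a divergent trajectory only needs to keep \emph{one} coordinate in the cusp at any given time. Averaging the ($\ge1$)-valued number of $\eta$-thin coordinates over $[0,T]$ and letting $T\to\infty$ (then $\eta\to0$, which is harmless as there are only $m$ coordinates and the frequencies are monotone in $\eta$) produces escape rates $\delta_1,\dots,\delta_m\in[0,1]$ with $\delta_1+\dots+\delta_m\ge1$, where $\delta_j$ is the asymptotic frequency with which the $j$-th coordinate trajectory $a_tu_{\bm\xi_j}\Z^{n+1}$ lies in the cusp of $X$. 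The quantitative contraction argument of \cite{MR3736492}, run on the horospherical leaf of $X^m$ and organised by the faces $\mathcal F_J$, then shows that each coordinate escaping with rate $\delta_j$ forces a Hausdorff-dimension deficit of at least $\tfrac{n}{n+1}\,\delta_j$ in the corresponding $\R^n$-factor of the leaf — the constant $\tfrac{n}{n+1}$ arising from the $a_1$-dynamics (it equals $\operatorname{codim}\Sing(n)$ for $n\ge2$ by Theorem~\ref{thm6}) and the sharp linear dependence on $\delta_j$ being the content of the KKLM/\cite{das2019variational} estimate at $\delta_j=1$ and below. Summing over the $m$ coordinates, the divergent set on the leaf has Hausdorff dimension at most $nm-\tfrac{n}{n+1}(\delta_1+\dots+\delta_m)\le nm-\tfrac{n}{n+1}$, which by Lemma~\ref{lem4} gives the claimed bound \eqref{niceone+} for $\Sing^m(n)$.

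I expect the genuine obstacle to be exactly this last estimate: pushing the KKLM contraction argument through on the \emph{stratified} cusp of $X^m$ and extracting the \emph{sharp} constant $\tfrac{n}{n+1}$, rather than merely some non-trivial one. The difficulty is that the per-coordinate escape rates $\delta_j$ and the time windows realising them are coupled through the single flow parameter, so one cannot separate the coordinates and appeal to a product-of-dimensions inequality — indeed $\Sing(n)$ is dense in $\R^n$, so its box-counting dimension equals $n$ and the naive Marstrand-type combination (Lemma~\ref{lem3}) is useless. Instead one must run a single covering/contraction argument in the product, maintaining one budget counter that is debited whenever \emph{any} coordinate is deep in its own cusp and cashed against the $e^{(n+1)T}$-expansion of the unstable leaf, with the combinatorics of the faces $\mathcal F_J$ handled uniformly in $T$. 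Granting this, Theorem~\ref{prob5+} follows, and combined with the lower bound \eqref{yes} it gives the equality \eqref{niceone} of Theorem~\ref{prob5} — hence also Theorem~\ref{prob1} — for $n\ge2$.
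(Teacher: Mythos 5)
Your setup is the right one (the joint Dani correspondence, the product space $X^m$ with the diagonal flow, escape frequencies $\delta_1,\dots,\delta_m$ with $\delta_1+\dots+\delta_m\gtrsim1$), and it matches the paper's framework. But the proof has a genuine gap exactly where you flag it: the claimed estimate that an escape rate $\delta_j$ in the $j$-th coordinate forces a Hausdorff-dimension deficit of $\tfrac{n}{n+1}\delta_j$, and that these deficits can be summed, is never established — you defer it as "the genuine obstacle" and then write "granting this". That deferred step \emph{is} the theorem; nothing before it goes beyond the correspondence \eqref{vb700}. Moreover, your diagnosis of why it is hard is off the mark. You argue that one cannot decouple the coordinates and must rerun the KKLM contraction argument on $X^m$ with a stratified cusp, because $\Sing(n)$ has full box dimension so per-coordinate dimension bounds cannot be combined. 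The latter observation is correct but irrelevant to the route actually needed: the input from Kadyrov--Kleinbock--Lindenstrauss--Margulis (Theorem~\ref{thm4}, i.e.\ their Theorem~1.5) is not a dimension statement about $\Sing(n)$ but a \emph{single-scale covering count} for the finite-time escape sets $Z(\ve,N,s,\delta)$: each is covered by $Cs^{3N}e^{(n+1-\delta)nsN}$ balls of the common radius $e^{-(n+1)sN}$, with the exponent depending linearly on $\delta$. Because the scale is the same in every coordinate, one simply multiplies these covers over $j=1,\dots,m$ to cover the product sets $Z(\ve,N,s,\delta_1)\times\dots\times Z(\ve,N,s,\delta_m)$, and the joint-singularity budget enters only through the elementary combinatorial constraint $\delta_1+\dots+\delta_m\ge 1-\tfrac{m+1}{s}$ with $\delta_j$ ranging over the finite set $\tfrac1s\Z\cap[0,1)$ (this discretization also repairs your "let $T\to\infty$" step, since the limiting frequencies need not exist). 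Letting $N\to\infty$ and then $s\to\infty$ in the resulting box-count gives $nm-\tfrac{n}{n+1}$ directly; the constant $\tfrac{n}{n+1}$ comes from the ratio of the covering exponent to $(n+1)sN$, not from Theorem~\ref{thm6}, and no new contraction argument on the stratified cusp of $X^m$ (nor the sharp variational machinery of Das--Fishman--Simmons--Urba\'nski) is required.

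So, as written, the proposal does not prove \eqref{niceone+}: the decisive estimate is only conjectured, and the proposed way of obtaining it (a product-space KKLM argument) is both unproven and unnecessarily strong, whereas the intended argument is the coordinatewise application of Theorem~\ref{thm4} at a common scale followed by a product-of-covers computation as in Proposition~\ref{prop3}.
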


\medskip

Note that this upper bound estimate is valid for  $n=1$.  Clearly, in this case it is not sharp when $m=1$ since  $\Sing^1(1)= \Sing(1)= \Q$ and so $\dim \Sing^1(1) =0 $.
Also, note that the lower bound given by \eqref{eqn+1} does not match the upper bound given by \eqref{niceone+}.
Nevertheless, we suspect that \eqref{niceone+} is sharp when $m  \ge 2$.

\begin{problem} \label{prob6}
Let $m\geq 2$. Verify if $\dim\Sing^m(1)=m -  \frac12$.
\end{problem}

\noindent  Clearly, if true then we can replace the conditions on $m$ and $n$ in Theorem~\ref{prob5} by $mn>1$. Although, not explicitly stated or even discussed, it is worth mentioning that Problem~\ref{prob4} concerning the set $\Sing^2_{\bm f}(n)$ also has a natural generalisation  to systems of linear form.

\medskip

The proof of Theorem~\ref{prob5+}  (and indeed Theorem~\ref{prob3}) makes use of the powerful connection between problems in Diophantine approximation an  homogeneous dynamics. This we now briefly explain.  The various Diophantine notions discussed  in this chapter correspond to certain types of orbits of unimodular lattices under the action by diagonal matrices. For instance, as was famously discovered by Dani \cite{dani1985divergent}, a point $\bm\xi=(\xi_1,\dots,\xi_n)\in\R^n$ is badly approximable if and only if the orbit
$$
\left\{ g_tu_{\bm\xi}\Z^{n+1} : t >0 \right\}
$$
is bounded in the homogeneous space $X_{n+1} = \operatorname{SL}_{n+1}(\R)/\operatorname{SL}_{n+1}(\Z)$ of unimodular lattices in $\R^{n+1}$.  Here and throughout,

$$
g_t:=\left(\begin{array}{ccccccc}
                   e^{nt} &&        &&        &\,& \\
                          && e^{-t} &&        && \\
                          &&        && \ddots && \\
                          &&        &&        && e^{-t}
                 \end{array}
\right)
\qquad  \qquad   {\rm for } \quad  t \in \Rp
$$
and

$$
u_{\bm\xi}:=\left(\begin{array}{ccccccc}
                   1 &\,& \xi_1 &\,& \dots &\,& \xi_n \\
                   0 &\,& 1 &\,& &\,& \\
                   \vdots &\,&  &\,&\ddots &\,& \\
                   0 &\,&  &\,& &\,& 1
                 \end{array}
\right)
\qquad  \qquad   {\rm for } \quad  \bm\xi=(\xi_1,\dots,\xi_n)\in\R^n
\,.
$$

\noindent Today this beautiful and powerful equivalence between badly approximable  points and the behaviour of orbits in  $X_{n+1}$ is  simply refereed to as Dani's correspondence.  For background and further details see for instance \cite{MR1043706, MR1928528}.

Recall that the homogeneous space $X_{n+1}$ is non-compact and, by Mahler's criterion, every bounded subset of $X_{n+1}$ is contained in
$$
K_{\varepsilon} := \left\{ \Lambda \in X_{n+1}: \inf_{\vvv v\in\Lambda,\,\vvv v\neq\vvv0}\|\vvv v\|\ge\varepsilon \right\}
$$
for some $\varepsilon>0$, where $\|\cdot\|$ is any norm on $\R^{n+1}$.
With this in mind,  in the same paper \cite{dani1985divergent}, Dani went on to show  that $\bm\xi\in\R^n$ is singular if and only if the orbit
$g_tu_{\bm\xi}\Z^{n+1}$ diverges as $t\to\infty$; that is, for any $\ve>0$ there exists a constant $t_{\ve,\bm\xi}>0$ such that
$$
\forall~t\ge t_{\ve,\bm\xi}\qquad g_tu_{\bm\xi}\Z^{n+1}\not\in K_\ve\,.
$$
This means that the orbit $g_tu_{\bm\xi}\Z^{n+1}$ leaves any bounded set `forever' from some `time' point $t_{\ve,\bm\xi}$.
In the same vein, it can be verifed that the matrix $\bm\Xi \in \Mmn$ composed of the columns  $\bm\xi_1,\dots,\bm\xi_m\in\R^{n}$ is jointly singular if and only if for any $\ve>0$ there exists a constant $t_{\ve,\bm\Xi}>0$ such that
\begin{equation}\label{vb700}
\forall~t\ge t_{\ve,\Xi}\quad\exists~j\in\{1,\dots,m\}\qquad g_tu_{\bm\xi_j}\Z^{n+1}\not\in K_\ve\,.
\end{equation}
Unlike for singular points, for every $j\in\{1,\dots,m\}$ the individual orbit $g_tu_{\bm\xi_j}\Z^{n+1}$ need not be divergent and could in fact for some $\ve>0$ return to the bounded set $K_\ve$ arbitrarily often.

The proof of Theorem~\ref{prob5+} and indeed Theorem~\ref{prob3} rely on the following powerful statement adapted for our application in mind due to Kadyrov, Kleinbock, Lindenstrauss  $\&$ Margulis \cite[Theorem 1.5]{MR3736492}.
Given $\bm\xi\in\R^n$, $N>1$, $s>0$ and $\ve>0$, let
$$
S_{\bm\xi}(N,s,\ve):=\{\ell\in\{1,\dots,N\}:g_{s\ell}u_{\bm\xi}\Z^{n+1}\not\in K_\ve\}\,.
$$
Thus, $S_{\bm\xi}(N,s,\ve)$ corresponds to those times $t=sl$ ($1\le\ell\le N$) for which the orbit  $ g_{t}u_{\bm\xi}\Z^{n+1}$  does not lie in $K_\ve$. In what follows, given a set $X$ we let $\# X $ denote its cardinality.

\begin{theorem}[Kadyrov, Kleinbock, Lindenstrauss  $\&$ Margulis]\label{thm4}
Let $B_1^n$ be the unit ball in $\R^n$ centred at the origin.
Then there exist $s_0 > 1$ and $C>0$ such that for any $s > s_0$, there exists $\ve>0$ such that for any $N\in\N$ and $\delta\in[0,1)$, the set
$$
Z(\ve,N, s, \delta):=\left\{\bm\xi \in B_1^n: \frac{\#S_{\bm\xi}(N,s,\ve)}{N}\ge\delta\right\}
$$
can be covered with $Cs^{3N} e^{(n+1-\delta)nsN}$ balls of radius $e^{-(n+1)sN}$.
\end{theorem}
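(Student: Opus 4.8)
The plan is to read Theorem~\ref{thm4} off the general covering estimate of Kadyrov, Kleinbock, Lindenstrauss and Margulis \cite[Theorem~1.5]{MR3736492}, after fitting the present setting into theirs. First one fixes the dictionary: the homogeneous space is $X_{n+1}$, the one-parameter diagonal subgroup is $\{g_t\}$ as displayed above, the expanding horospherical subgroup is $\{u_{\bm\xi}:\bm\xi\in\R^n\}$, and $K_\ve$ is the $\ve$-thick part of $X_{n+1}$ furnished by Mahler's criterion. The only genuinely quantitative points are then the values of three exponents. The first is the expansion rate of $g_t$ on the entries of $u_{\bm\xi}$, which equals $e^{(n+1)t}$; this is exactly what produces a cover by balls of radius $e^{-(n+1)sN}$ after $N$ steps of length $s$. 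The second is the leading coefficient of the linear-in-$\delta$ correction to the exponent of the count, which works out to $n$, so that the count is $e^{(n+1-\delta)nsN}$. The third is the size of the per-step polynomial-in-$s$ and combinatorial losses, which I would bundle into $s^{3N}$ at the cost of enlarging $s_0$ (in particular so that $s_0\ge 2$).

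It is worth recording the mechanism, since it is what pins down these exponents. By Dani's correspondence in quantitative form --- which follows at once from the explicit forms of $g_t$ and $u_{\bm\xi}$ above --- the condition $g_{s\ell}u_{\bm\xi}\Z^{n+1}\notin K_\ve$ is equivalent to the solvability of $|\vvv q|<\ve e^{s\ell}$ together with $|q_1\xi_1+\dots+q_n\xi_n+p|<\ve e^{-ns\ell}$ in $(p,\vvv q)\in\Z\times\Z^n$. One then builds a self-similar tree of cubes inside $B_1^n$: a level-$\ell$ cube has side $e^{-(n+1)s\ell}$ and contains $\asymp e^{n(n+1)s}$ level-$(\ell+1)$ cubes. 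For each admissible itinerary $\omega\in\{0,1\}^N$ carrying at least $\delta N$ ones --- at most $2^N$ of these --- one refines level by level and, whenever $\omega_\ell=1$, discards every child cube disjoint from $\{\bm\xi:g_{s\ell}u_{\bm\xi}\Z^{n+1}\notin K_\ve\}$. The crucial quantitative input is that, after rescaling the parent cube to the unit cube, the number of retained children is at most $\mathrm{poly}(s)\,e^{n^2s}=\mathrm{poly}(s)\,e^{n(n+1)s}e^{-ns}$; thus one gains a factor $e^{-ns}$ at each of the $\ge\delta N$ ``cusp'' levels. Multiplying the branching numbers over all $N$ levels (which, since $n^2<n(n+1)$, is largest when the itinerary has exactly $\delta N$ ones) and summing over the $\le 2^N$ itineraries yields a cover of $Z(\ve,N,s,\delta)$ by $\ll s^{3N}\,e^{n(n+1)sN-n\delta sN}=s^{3N}\,e^{(n+1-\delta)nsN}$ cubes of side $e^{-(n+1)sN}$, which is the assertion.

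The heart of the matter --- and the reason this is a genuine theorem of \cite{MR3736492} rather than a routine union bound --- is precisely the branching estimate just used: that staying in the cusp at level $\ell$ costs a factor $e^{-\Theta(s)}$ in the number of retained children, not merely $O(1)$. A crude union bound over the integer solutions $(p,\vvv q)$ admissible at level $\ell$ is too lossy, since their number grows with $\ell$ and, once the associated slabs are covered at the next scale, one only recovers the trivial branching. The way around this is to exploit the geometry of $X_{n+1}$ near infinity: by Minkowski's convex body theorem and reduction theory the short vectors of a lattice outside $K_\ve$ are strongly constrained, and the height function recording the reciprocal of the length of the shortest nonzero vector is contracted on average by the $g_s$-push-forward of Lebesgue measure on the unipotent group $\{u_{\bm\xi}\}$. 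This effective non-divergence / Margulis-function estimate is the technical core of \cite[Theorem~1.5]{MR3736492}, and I expect obtaining the branching factor $e^{-\Theta(s)}$ --- uniformly in the level $\ell$ and in the position of the parent cube --- to be the main obstacle. Granting it, what remains is the elementary multiplicative tree count and the union over itineraries described above, with all lower-order losses absorbed into the constant $C$ and the factor $s^{3N}$.
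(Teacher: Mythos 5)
Your proposal takes essentially the same route as the paper: the paper does not prove Theorem~\ref{thm4} at all but simply quotes it as the (adapted, $m=1$) case of \cite[Theorem 1.5]{MR3736492}, exactly as you propose, and your sketch of the Dani-correspondence/tree-branching mechanism inside that reference is additional background rather than a different argument. The one point the paper adds that you do not mention is that $\delta=0$ is not covered by \cite[Theorem 1.5]{MR3736492} and has to be disposed of separately, which is trivial since then $Z(\ve,N,s,\delta)=B_1^n$ and the unit ball is covered by $Ce^{(n+1)nsN}$ balls of radius $e^{-(n+1)sN}$ --- a count your own branching estimate also produces when no cusp levels are imposed.
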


Note that ${\bm\xi}\in Z(\ve,N, s, \delta)$ if and only if the proportion of times $t=sl\le sN$ ($1\le\ell\le N$) for which the orbit   $ g_{t}u_{\bm\xi}\Z^{n+1}$   avoids $K_\ve$ is at least $\delta$. To be absolutely precise, the case when $\delta =0 $ is not covered by \cite[Theorem 1.5]{MR3736492}.  However, it is trivially true since then $Z(\ve,N, s, \delta)= B_1^n $ and the unit ball can easily be seen to be covered with $C e^{(n+1-\delta)nsN}$ balls of radius $e^{-(n+1)sN}$.  The next statement relates the jointly singular sets of interest to those appearing in  Theorem~\ref{thm4}.

\begin{proposition}\label{prop3}
Let  $\ve>0$ and $s\ge1$.  Then
\begin{equation}\label{vb777}
\Sing^m(n)\cap \left(B_1^n\right)^m\subset \bigcup_{\bm\delta\in\Delta_s}\bigcup_{N_0=1}^\infty\bigcap_{N=N_0}^\infty Z_m(\ve,N, s, \bm\delta)\,,
\end{equation}
where
$$
\Delta_s:=\Big\{\bm\delta=(\delta_1,\dots,\delta_m)\in\tfrac1s\Z^m \cap [0,1)^m : \delta_1+\dots+\delta_m\ge 1-\tfrac{m+1}s\Big\}\,
$$
and
$$
Z_m(\ve,N, s, \bm\delta):=Z(\ve,N, s, \delta_1)\times\dots\times Z(\ve,N, s, \delta_m)\,.
$$
\end{proposition}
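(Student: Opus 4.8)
The plan is to pass to the homogeneous‑dynamics reformulation \eqref{vb700} of joint singularity and then read off the asserted covering from a purely combinatorial count on the sets $S_{\bm\xi_j}(N,s,\varepsilon)$. Fix a matrix $\bm\Xi\in\Sing^m(n)\cap(B_1^n)^m$ with columns $\bm\xi_1,\dots,\bm\xi_m\in B_1^n$, and keep $\varepsilon>0$, $s\ge1$ fixed as in the statement. First I would invoke \eqref{vb700} with this $\varepsilon$: since $\bm\Xi$ is jointly singular there is $t_0=t_{\varepsilon,\bm\Xi}>0$ such that for every $t\ge t_0$ there is some $j\in\{1,\dots,m\}$ with $g_tu_{\bm\xi_j}\Z^{n+1}\notin K_\varepsilon$. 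Put $L_0:=\lceil t_0/s\rceil+1$. Then for every $N\ge L_0$ and every integer $\ell$ with $L_0\le\ell\le N$ one has $s\ell\ge t_0$, hence $\ell\in S_{\bm\xi_j}(N,s,\varepsilon)$ for some $j$, so
$$\{\ell\in\Z:\ L_0\le\ell\le N\}\ \subseteq\ \bigcup_{j=1}^m S_{\bm\xi_j}(N,s,\varepsilon)\qquad(N\ge L_0).$$
Taking cardinalities and using subadditivity gives $\sum_{j=1}^m\#S_{\bm\xi_j}(N,s,\varepsilon)\ge N-L_0+1$; writing $a_j(N):=\#S_{\bm\xi_j}(N,s,\varepsilon)/N$ this reads $\sum_{j=1}^m a_j(N)\ge 1-(L_0-1)/N$ for all $N\ge L_0$.

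Next I would round down to the lattice $\tfrac1s\Z$. For $N\ge L_0$ set $\delta_j(N):=\tfrac1s\lfloor s\,a_j(N)\rfloor$, so that $0\le\delta_j(N)\le a_j(N)\le1$ and $\delta_j(N)>a_j(N)-\tfrac1s$ (the boundary case $a_j(N)=1$, which forces $\ell\in S_{\bm\xi_j}(N,s,\varepsilon)$ for all $\ell\le N$, is harmless: one may then take $\delta_j(N)=(\lceil s\rceil-1)/s\ge a_j(N)-\tfrac1s$). Thus $\bm\delta(N):=(\delta_1(N),\dots,\delta_m(N))\in\tfrac1s\Z^m\cap[0,1)^m$ and, as soon as $N\ge s(L_0-1)$,
$$\sum_{j=1}^m\delta_j(N)\ \ge\ \sum_{j=1}^m a_j(N)-\frac ms\ \ge\ 1-\frac{L_0-1}{N}-\frac ms\ \ge\ 1-\frac{m+1}{s},$$
so $\bm\delta(N)\in\Delta_s$; and by construction $a_j(N)\ge\delta_j(N)$ for each $j$, i.e. $\bm\Xi\in Z_m(\varepsilon,N,s,\bm\delta(N))\subseteq\bigcup_{\bm\delta\in\Delta_s}Z_m(\varepsilon,N,s,\bm\delta)$ for every sufficiently large $N$.

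The step that still needs work — and the one I expect to be the main obstacle — is that $\bm\delta(N)$ depends on $N$, whereas \eqref{vb777} demands a single $\bm\delta\in\Delta_s$ valid for all $N\ge N_0$. To produce it I would exploit the regularity of the counting functions together with the dynamical input: each $\#S_{\bm\xi_j}(N,s,\varepsilon)$ is non‑decreasing in $N$ and jumps by at most $1$, so $|a_j(N+1)-a_j(N)|\le 1/(N+1)$; moreover a single $\varepsilon'$‑deep visit of an orbit at a time $t$ forces $\varepsilon$‑depth on an interval of length $\asymp\log(\varepsilon/\varepsilon')$ around $t$ (a spill‑over estimate comparing $K_\varepsilon$ with $K_{\varepsilon'}$), while \eqref{vb700} supplies $\{\ell:L_0^{(\varepsilon')}\le\ell\le N\}\subseteq\bigcup_j S_{\bm\xi_j}(N,s,\varepsilon')$ for every $\varepsilon'>0$. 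Chaining these should upgrade the easy bound $\liminf_N\sum_j a_j(N)\ge1$ to $\sum_j\rho_j\ge1-1/s$ with $\rho_j:=\liminf_N a_j(N)$; one then sets $\delta_j:=\tfrac1s\lfloor s\rho_j\rfloor$ (taken one step lower if $\rho_j$ happens to lie on the lattice, and $\delta_j:=0$ if $\rho_j=0$), verifies $\bm\delta\in\Delta_s$ exactly as above, and chooses $N_0\ge s(L_0-1)$ large enough that $a_j(N)\ge\delta_j$ holds for all $j$ and all $N\ge N_0$. The delicate point throughout is precisely the inequality $\sum_j\liminf_N a_j(N)\ge1-\tfrac1s$: the sub‑additivity argument by itself only controls $\liminf_N\sum_j a_j(N)$, and it is here that the monotonicity of the $\#S_{\bm\xi_j}$ and the spill‑over of deep cusp excursions must be brought to bear.
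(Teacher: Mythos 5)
Your first two paragraphs coincide with the paper's own proof: the counting bound $\sum_{j=1}^m\#S_{\bm\xi_j}(N,s,\ve)\ge N-s^{-1}t_{\ve,\bm\Xi}$ extracted from \eqref{vb700}, followed by rounding the frequencies $a_j(N):=\#S_{\bm\xi_j}(N,s,\ve)/N$ down to the lattice $\tfrac1s\Z$ to produce, for every sufficiently large $N$, a vector $\bm\delta(N)\in\Delta_s$ with $\bm\Xi\in Z_m(\ve,N,s,\bm\delta(N))$. Up to that point everything you write is correct and is exactly the argument in the paper.

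The gap is in your third paragraph, and it is genuine. The paper removes the $N$-dependence of $\bm\delta(N)$ with one soft observation: $\Delta_s$ is finite, so by pigeonhole a single $\bm\delta\in\Delta_s$ works for infinitely many $N$; this yields the inclusion in the limsup form $\bigcap_{N_0}\bigcup_{N\ge N_0}Z_m(\ve,N,s,\bm\delta)$, which is all that the covering computations behind Theorems~\ref{prob5+} and \ref{prob3} use (covers by the $Z_m(\ve,N,s,\bm\delta)$ are only needed along arbitrarily large $N$). You instead aim at the literal ``eventually'' form by trying to prove $\sum_j\liminf_N a_j(N)\ge 1-\tfrac1s$, and you correctly flag this as the delicate point; in fact it is not merely delicate but fails in general, so the monotonicity/spill-over route cannot be completed. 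Joint singularity forces, at each large time, at least one orbit out of $K_\ve$, but it allows the responsibility to alternate between the orbits over rapidly growing time blocks: for $m=2$ one can arrange, by continued-fraction type constructions with overlapping and ever deeper cusp excursions, that the orbit of $\bm\xi_1$ stays inside $K_\ve$ throughout a block $[T_{2k+1},T_{2k+2}]$ with $T_{2k+2}\gg T_{2k+1}$ while the orbit of $\bm\xi_2$ is far out in the cusp, and conversely on the next block; then $\liminf_N a_1(N)=\liminf_N a_2(N)=0$, and for $s>m+1$ no fixed $\bm\delta\in\Delta_s$ satisfies $a_j(N)\ge\delta_j$ for all large $N$. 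So your instinct that the displayed ``for all $N\ge N_0$'' form does not follow from the $N$-dependent construction is sound, but the remedy is not a stronger estimate on the liminfs: it is to finish with the pigeonhole on the finite set $\Delta_s$ and settle for the ``infinitely many $N$'' conclusion, which is what the paper's proof actually delivers and all that the subsequent dimension bound requires.
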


\begin{proof} Recall, that given any $\bm\Xi \in \Mmn$ its column vectors are denoted by $\bm\xi_1,\dots,\bm\xi_m\in\R^n$.
Now, suppose that $\bm\Xi\in\Sing^m(n)\cap \left(B_1^n\right)^m$. Then, by \eqref{vb700}, for any  $\ve>0$ and all $N>s^{-1}t_{\ve,\Xi}$ we have that
$$
\{\ell\in\N:s^{-1}t_{\ve,\Xi}\le \ell\le N\}\subset\bigcup_{j=1}^m S_{\bm\xi_j}(N,s,\ve)\,.
$$
It follows that
$$
\sum_{j=1}^{m}\#S_{\bm\xi_j}(N,s,\ve)~\ge~N-s^{-1}t_{\ve,\bm\Xi}\,  .
$$
This implies that
\begin{equation}\label{vb6}
  \sum_{j=1}^m\frac{\#S_{\bm\xi_j}(N,s,\ve)}{N}\ge 1 - \frac{t_{\ve,\bm\Xi}}{s N }  \,.
\end{equation}
For each $j\in\{1,\dots,m\}$,  let $\delta_j\in\frac1s\Z$ be the largest number such that
$$
\frac{\#S_{\bm\xi_j}(N,s,\ve)}{N}\ge \delta_j\,.
$$
Then, with $\bm\delta=(\delta_1,\dots,\delta_m)$ we have that
\begin{equation}\label{vb888}
  \Xi\in Z_m(\ve,N, s, \bm\delta)\,.
\end{equation}
We now show that $\bm\delta\in\Delta_s$.   Since $\#S_{\bm\xi_j}(N,s,\ve)\le N$, we have that $0\le \delta_j\le 1$. By the maximality of $\delta_j$ we have that
$$
\delta_j+\frac1s\ge\frac{\#S_{\bm\xi_j}(N,s,\ve)}{N}\ge \delta_j\,.
$$
By \eqref{vb6}, it follow that for $N$ sufficiently large
\begin{equation}\label{vb6++}
  \sum_{j=1}^m\delta_j  \, \ge  \,  1 - \frac ms- \frac{t_{\ve,\bm\Xi}}{s N }
\, \ge \,   1-\frac {m+1}s   \, .
\end{equation}
Therefore, $\bm\delta\in\Delta_s$. Since $\Delta_s$ is finite, the latter condition together with \eqref{vb888} implies \eqref{vb777} and thereby completes the proof of the proposition.
\end{proof}

As we shall now see, armed with Theorem~\ref{thm4} and Proposition~\ref{prop3}, it is relatively straightforward to establish Theorem~\ref{prob5+} and indeed Theorem~\ref{prob3}.

\begin{proof}[Proof of Theorem~\ref{prob5+}]
Without loss of generality,  it suffices  to show \eqref{niceone+} for the set $\Sing^m(n)\cap \left(B_1^n\right)^m$ instead of $\Sing^m(n)$. In short, this makes use of the fact that  $\Sing^m(n)$ is contained in a countable union of translates of $\Sing^m(n)\cap \left(B_1^n\right)^m$.   By Theorem~\ref{thm4}, for $ s > s_0$ and each $\bm\delta\in\Delta_s$, there exists a cover of $Z_m(\ve,N, s, \bm\delta)$ by
$$
\prod_{j=1}^m Cs^{3N} e^{(n+1-\delta_j)nsN}\ll s^{3mN} e^{(n+1)nmsN-(1-\frac{m+1}{s})nsN}
$$
balls of the same radius
\begin{equation}\label{diam}
 r  =e^{-(n+1)sN}\,.
\end{equation}
Thus, in view of Proposition~\ref{prop3} and the trivial fact that
\begin{equation*}
\#\Delta_s\le (s+1)^{m}\, ,
\end{equation*}
 it follows that we have a cover of $\Sing^m(n)\cap \left(B_1^n\right)^m$ by
$$
\ll (s+1)^ms^{3mN} e^{(n+1)nmsN-(1-\frac{m+1}{s})nsN}
$$
balls of the same radius satisfying \eqref{diam}. Therefore, by the definition of Hausdorff dimension (see Definition~\ref{DefHD} and Remark~\ref{usefulHD}  immediately following it), for every $ s > s_0$  we have that

\begin{align*}
\displaystyle\dim&\left(\Sing^m(n)\cap \left(B_1^n\right)^m\right) \ \le\\[2ex]
&\displaystyle \qquad \le \  \limsup_{N\to\infty}\frac{\log\left((s+1)^ms^{3mN} e^{(n+1)nmsN-(1-\frac{m+1}{s})nsN}\right)}{-\log(e^{-(n+1)sN})}\\[2ex]
&\displaystyle \qquad  = \ \limsup_{N\to\infty}\frac{3mN\log s+\left({(n+1)nmsN-(1-\frac{m+1}{s})nsN}\right)}{{(n+1)sN}}\\[2ex]
&\displaystyle \qquad =  \  \frac{3m\log s+(n+1)nms-(1-\frac{m+1}{s})ns}{(n+1)s}\,.
\end{align*}
Letting  $s\to\infty$  gives
$$
\dim\left(\Sing^m(n)\cap \left(B_1^n\right)^m\right) \, \le  \,  \frac{(n+1)nm-n}{n+1} \, =  \,  mn-\frac{n}{n+1}   \,  ,
$$
and thereby completes the proof of Theorem~\ref{prob5+}.
\end{proof}

\begin{proof}[Proof of Theorem~\ref{prob3}]
Given $\bm f:U\to\R^n$ as in the statement of the theorem, let $$\cM_{\bm f}:=\{\bm\Xi \in \mathbb{M}_{n,2}:\bm\xi_2=\bm f(\bm\xi_1)\}  \, .
$$
Since $f\bm $ is bi-Lipschitz,
$$
\dim \big(\Sing^2_{\bm f}(n)\big)= \dim\big(\Sing^2(n)\cap \cM_{\bm f}\big)\,.
$$
Therefore, \eqref{vb100} is equivalent to
\begin{equation}\label{vb100+}
\dim\big(\Sing^2(n)\cap \cM_{\bm f}\big) \le n-\frac{n}{2(n+1)}.
\end{equation}
As in the previous proof,  it suffices to show \eqref{vb100+} for $\Sing^2(n)\cap \cM_{\bm f} \cap \big(B_1^n\big)^2$ instead of $\Sing^2(n)\cap \cM_{\bm f}$. With this in mind, by Proposition~\ref{prop3}, for any $\ve>0$ and any $s\ge1$ we have that
\begin{equation}\label{vb777++}
\Sing^2(n)\cap \cM_{\bm f}\cap \big(B_1^n\big)^2\subset \bigcup_{\bm\delta\in\Delta_s}\bigcup_{N_0=1}^\infty\bigcap_{N=N_0}^\infty Z_2(\ve,N, s, \bm\delta)\cap \cM_{\bm f}\,.
\end{equation}
Observe that $$\max\{\delta_1,\delta_2\}\ge\tfrac12-\tfrac{3}{2s}  \, ,  $$ and so  by Theorem~\ref{thm4}, for $s > s_0$ and each $\bm\delta\in\Delta_s$, we have a cover of $Z_2(\ve,N, s, \bm\delta)\cap \cM_{\bm f}$ by
$$
\min_{1\le j\le 2} Cs^{3N} e^{(n+1-\delta_j)nsN}\le Cs^{3N} e^{\left(n+1-\tfrac12+\tfrac{3}{2s}\right)nsN}
$$
balls of the same radius
\begin{equation}\label{diam+}
  r =  e^{-(n+1)sN}\,.
\end{equation}
Thus, in view of Proposition~~\ref{prop3} and the trivial fact that
\begin{equation*}
\#\Delta_s\le (s+1)^{2}\, ,
\end{equation*}
it follows that we have a cover of $\Sing^2(n)\cap \cM_{\bm f}\cap \left(B_1^n\right)^2$ by
$$
\ll (s+1)^2s^{3N} e^{\left(n+1-\tfrac12+\tfrac{3}{2s}\right)nsN}
$$
balls of the same radius $r$ as given by \eqref{diam+}. Therefore, for every $ s > s_0$ we have that
\begin{align*}
\displaystyle\dim&\left(\Sing^2(n)\cap \cM_{\bm f}\cap \left(B_1^n\right)^2\right) \ \le\\[2ex]
&\displaystyle   \qquad \le \ \limsup_{N\to\infty}\frac{\log\left((s+1)^2s^{3N} e^{\left(n+1-\tfrac12+\tfrac{3}{2s}\right)nsN}\right)}{-\log(e^{-(n+1)sN})}\\[2ex]
&\displaystyle  \qquad = \ \limsup_{N\to\infty}\frac{3N\log s+\left(n+1-\tfrac12+\tfrac{3}{2s}\right)nsN}{{(n+1)sN}}\\[2ex]
&\displaystyle  \qquad = \ \frac{3\log s+\left(n+1-\tfrac12+\tfrac{3}{2s}\right)ns}{(n+1)s}\,.
\end{align*}
On letting  $s\to\infty$,  gives
$$
\dim\left(\Sing^2(n)\cap \cM_{\bm f}\cap \left(B_1^n\right)^2\right)\le \frac{\left(n+1-\tfrac12\right)n}{n+1} = n-\frac{n}{2(n+1)}  \, ,
$$
and thereby completes the proof of Theorem~\ref{prob3}.
\end{proof}

\bigskip

As mentioned at the start of this subsection, even if we increased the number of users in the basic setup of Examples~1~$\&$~2 we would still only need to call upon the general Diophantine approximation theory described above for a singular linear form (i.e., $m=1$).
A natural question that a reader may well be  asking at this point is, whether or not there is a  model  of a communication channel that in its analysis requires us to genuinely exploit the general systems of linear forms theory with $m > 1$?
The answer to this is emphatically yes.  The simplest setup that demonstrates this involves $n$ users and one receiver equipped with $m$ antennae.  Recall,  an antenna is a device (such as an old fashioned radio or television ariel) that is used to transmit or receive signals.  Within Examples~1 $\&$ 2, each  transmitter and receiver are implicitly understood to have a single antenna. This convention is pretty standard whenever the number of antennae at a transmitter or receiver is not specified.
For a single receiver to be equipped with $m$ antennae is in essence equivalent to  $m$ receivers (each with a single antenna)  in cahoots  with one another.   The overall effect of sharing information  is an increase in the probability that the receivers will be able to decode the transmitted messages.  We now briefly  explain how the setup alluded to above naturally brings into play the general Diophantine approximation  theory for  systems of linear forms.

\noindent {\bf Example 2A (multi-antennae receivers).} Suppose there are $n $ users $S_1,\dots,S_n$ and two receivers $R_1$ and $R_2$ which `cooperate' with one another. Furthermore, assume that $n \ge 3$.  Let $Q~\ge~1$ be an integer and suppose $S_j$  wishes to  transmit the  message $u_j \in\{0,\dots,Q\}$ simultaneously to   $R_1$  and  $R_2$.  Next, as in Example~2, for $i=1,2$ and $j=1,\dots,n$,  let $h_{ij}$ denote the channel coefficients associated with the transmission of  signals from $S_j$ to $R_i$. Also, let $y_i$ denote the signal received by  $R_i $  after (linear) encoding but before noise $z_i$ is taken into account.  Thus,
\begin{eqnarray}\label{eqn23FR}
y_1  & = & \lambda\sum_{j=1}^n h_{1j} \alpha_ju_j\,,  \\ [1ex]
y_2  & = & \lambda\sum_{j=1}^n h_{2j} \alpha_ju_j\,.  \label{eqn24FR} \,
\end{eqnarray}
where $\lambda,\alpha_1,\dots,\alpha_n$ are some positive real numbers.
Now let $\dminVBi$ the minimal distance between the $(Q+1)^n$ potential outcomes of $y_i$. Now,  the larger the  minimal distance $\dminVBi$ $(i=1,2)$ the greater the tolerance for noise and thus the more likely  the receivers $R_i$  are able to recover the messages $u_{1},\dots,u_n$ by rounding $y'_i = y_i + z_i $  to the closest  possible outcome of $y_i$ (given by \eqref{eqn24FR}). Thus, it is imperative to understand how $\dminVBi$ can be bounded below. Since $R_1$ and $R_2$ are sharing information  (in fact it is better than that, they are actually the same person but they are not aware of it!),  it is only necessary that at least one of  $\dminVBone$ or  $\dminVBtwo$  is relatively large  compared to the noise. In other words, we need that the points $(y_1,y_2)\in\R^2$ are sufficiently separated.
In order to analysis this, we first apply the inverse to the linear transformation
$$
L:=\left(\begin{array}{cc}
        h_{11}\alpha_1 & h_{12}\alpha_2 \\
        h_{21}\alpha_1 & h_{22}\alpha_2
      \end{array}
\right)
$$
to $(y_1,y_2)^t$.  Without loss of generality,  we can assume that the matrix norm of $ L$ and its inverse $L^{-1}$ are bounded above. Therefore, the separation between the points $(y_1,y_2) \in \R^2 $ is comparable to the separation between the points $(\tilde y_1,\tilde y_2)  \in \R^2$,  where
$$
(\tilde y_1,\tilde y_2)^t:=L(y_1,y_2)^t   \, .
$$
Let $(\bm\xi_1, \bm\xi_2) \in \R^{n-2} \times \R^{n-2}$  be the pair corresponding  to the two columns vectors of the  matrix
$$
\bm\Xi:=\left(L^{-1}\left(\begin{array}{ccc}
        h_{13}\alpha_3 & \dots & h_{1n}\alpha_n \\
        h_{23}\alpha_3 & \dots& h_{2n}\alpha_n
      \end{array}
\right)\right)^t   \, .
$$
 The upshot, after a little manipulation, is that analysing the separation of the  points $(y_1,y_2)\in\R^2$ equates to understanding the quantity
$$
\max  \{  |\vvv q\bm\xi_{1}+p_1|,  \vvv q\bm\xi_{2}+p_2|   \}
$$
for $(\vvv p,\vvv q)\in \Z^2 \times \Z^{n-2}$ with $1\le |\vvv q|\le Q$.  In particular,  asking for good separation equates to obtaining  good lower  bounds on the quantity in question.  In turn,  this naturally brings into play the general Diophantine approximation theory for  systems of $2$ linear forms in $n-2$ real variables.  Note that assuming  the number $n$ of users is strictly greater than two (the number of cooperating receivers) simply avoids the degenerate case.   For further details of the setup just described and its more sophisticated  variants,  we refer the reader to \cite[Example~1]{LayeredMotahari} and \cite[Section~3.2]{JafarBook} and references within.

\section{A `child' example and Diophantine approximation on manifolds}\label{sec3}

The theory of \emph{Diophantine approximation on manifolds} (as coined by Bernik \& Dodson in their Cambridge Tract \cite{MR1727177}) or \emph{Diophantine approximation of dependent quantities} (as coined by Sprind\v zuk in his monograph \cite{MR548467}) refers to the study of Diophantine
properties of points in $\R^n$ whose coordinates are confined by
functional relations or equivalently are restricted to a submanifold
$\cM$ of $\R^n$.  In this section we consider an example of a communication channel which brings to the forefront the role of the theory of Diophantine approximation on manifolds in wireless communication.

\begin{remark}
The reader may well argue that in our analysis of the wireless communication model considered in Example~2, we have already touched upon the theory of Diophantine approximation on manifolds. Indeed, as pointed out on several occasions (see in particular Remarks~\ref{badbad} and \ref{kgkg}),  the points of interest $\bm\xi=(\xi_1, \xi_2) $ and $\bm\xi'=(\xi'_1, \xi'_2)$  associated  with the example are functionally dependent.  The explicit dependency is given by \eqref{eqn50} and \eqref{eqn51}.  However, it is important to stress that the actual coordinates of each of these points are not subject to any dependency and so are not restricted to a sub-manifold of $\R^2$.  The upshot of this is that we can analyse the points independently using the standard single linear form theory of Diophantine approximation in $\R^n$.  In other words, the analysis within Example~2 does not require us to exploit the theory of Diophantine approximation on manifolds.
\end{remark}

\subsection{Example 3}

In this example we will consider a model that involves several ``transmitter-receiver'' pairs who simultaneously communicate using shared communication channels. For the sake of simplicity we will concentrate on the case of three  transmitter-receiver pairs; that is,  we suppose that there are three users $S_1$, $S_2$ and $S_3$ and there are also three receivers $R_1$, $R_2$ and $R_3$.  Let $Q  \ge 1$ be an integer and suppose for each $j=1,2,3$  the user $S_j$ wishes to send a message $u_{j}\in\{0,\dots,Q\}$ to receiver $R_j$.    After (linear) encoding, $S_j$ transmits
\begin{equation}\label{eqnnew22}
x_j:= \lambda\alpha_{j}u_{j}
\end{equation}
where $\alpha_{j}$ is a positive real number and $\lambda \ge 1 $ is a scaling factor.
Note that apart form the obvious extra user $S_3$ and receiver $R_3$,  the current setup is significantly different to that of Example~2 in that  $S_j$ does not wish to send  independent messages to the receivers $R_i$ ($i \neq j$).  In other words,  we are not considering  a three-user X-channel and thus, unlike Example~2,  the codeword of user $S_j$ does not have any component intended for any other receiver but $R_j$. Nevertheless, since the communication channel  is being shared, as in Example~2,  the signal $x_j$ transmitted by $S_j$ is being received by every receiver $R_i$ with appropriate channel coefficients and thereby causing interference. Formally, for $i,j=1,2,3$ let $h_{ij}$ denote the channel coefficients associated with the transmission of  signals from $S_j$ to $R_i$. Also, let $y_i$ denote the signal received by  $R_i $  before noise is taken into account.  Thus,
\begin{equation}\label{eqnnew25}
y_i   =  \sum_{j=1}^3h_{ij} x_j~\stackrel{\eqref{eqnnew22}}{=}~\lambda\sum_{j=1}^3h_{ij}\alpha_{j}u_{j}\,.
\end{equation}
Now as usual, let us bring noise into the setup.
If $z_i$ denotes the (additive) noise at receiver $R_i$ ($i=1,2,3$), then instead of \eqref{eqnnew25}, $R_i$ receives the signal
\begin{equation}\label{eqnnew27}
y'_i   =  y_i+z_i\,.
\end{equation}
Equations \eqref{eqnnew25} and \eqref{eqnnew27} represent one the simplest models of what is known as a {\em Gaussian Interference Channel} (GIC).
The ultimate goal is for the receivers $R_i$ $(i=1,2,3)$ to decode the messages $u_{i}$ from the observation of $y'_i$. This is attainable if $2|z_i|$ is smaller than the minimal distance between the outcomes of $y_i$ given by \eqref{eqnnew25}, which will be denoted by $\dminVBi$. As before, given that the nature of noise is often a random variable with normal distribution, the overarching goal  is to ensure the probability that $|z_i|<\tfrac12\dminVBi$ is large. Indeed, as in Examples~1~$\&$~2,  the larger the probability the more likely the receivers $R_i$ $(i=1,2,3)$ are able to recover messages by rounding $ y'_i $ (given by \eqref{eqnnew27}) to the closest  possible outcome of $y_i$ (given by \eqref{eqnnew25}). Thus, as in previous examples it is imperative to understand how $\dminVBi$ can be bounded below.   Note that there are potentially $(Q+1)^3$ distinct outcomes of $y_i$ and that
\begin{equation}\label{ub1}
0\le y_i\ll \lambda Q\qquad(1\le i\le 3),
\end{equation}
where the implicit implied constants depend on the maximum of the channel coefficients $h_{ij}$ and the  encoding coefficients
$\alpha_{j}$.
It is then easily verified, based on  the outcomes of $y_i$ given by \eqref{eqnnew25} being equally spaced, that  the minimal distance satisfies the following inequality
\begin{equation}\label{ub2}
  \dminVBi\ll\frac{\lambda}{Q^2}\qquad(1\le i\le 3)\,.
\end{equation}
Ideally, we would like to obtain lower bounds for  $\dminVBi$ that are both ``close'' to this ``theoretic'' upper bound and are valid for a large class of possible choices of channel coefficients.     Before we embark on the discussion of tools from Diophantine approximation that can be used for this purpose, we discuss how the idea of interference alignment introduced in the context of Example~2 extends to the setup of Example~3. This will naturally bring the theory of Diophantine approximation on manifolds into play.

Assume for the moment that $u_{j}\in \{ 0,1\} $ and for the ease of discussion, let us just concentrate on the signal $y_1$ received  at  $R_1$. Then there are generally up to $2^3=8$ different outcomes for $y_1$. However,
receiver $R_1$  is not interested in the signals $ u_{2}$ and $u_3$. So if these signals could be deliberately aligned (at the transmitters) via encoding into a single component, then there would be fewer possible outcomes for $y_1$. Clearly, such an alignment would require that the ratio $h_{12}\alpha_2/h_{13}\alpha_3$ is a rational number. For example, if this ratio is equal to one, that is $h_{12}\alpha_2=h_{13}\alpha_3$, then
$$
y_1=\lambda\Big(h_{11}u_1+h_{12}\alpha_{2}(u_{2}+u_3)\Big)\,.
$$
Clearly, in this case the number of distinct outcomes of $y_1$ is reduced from $8$ to $6$, since there are 4 different pairs $(u_{2},u_{3})$ as opposed to 3 different sums $u_{2}+u_{3}$ when $u_{j}$ take on  binary values.
Let us call the scenario described above {\em a perfect alignment}. For the received signals to be perfectly aligned at each receiver would require imposing highly restrictive constraints on the channel coefficients, which in practice would never be realised. Indeed, an encoding realising perfect alignment simultaneously at each receiver  would   necessarily  have that the following three ratios
\begin{align*}
\frac{h_{12}\alpha_2}{h_{13}\alpha_3}\,,\qquad
\frac{h_{21}\alpha_1}{h_{23}\alpha_3}\,,\qquad
\frac{h_{31}\alpha_1}{h_{32}\alpha_2}
\end{align*}
are all  rational numbers. For example, if all these ratios are equal to one  then we have that
$$
\det\left(\begin{array}{ccc}
0 & h_{12} & -h_{13}\\
h_{21} & 0 & - h_{23}\\
h_{31} & -h_{32} & 0
          \end{array}
\right)=0\,,
$$
or equivalently, that
$$
h_{12}h_{23}h_{31}=h_{32}h_{21}h_{13} \, .
$$
In reality, for the channel coefficients to satisfy this equality  would be so extraordinary  that  it is not worth considering. The upshot is that perfect alignment is simply not feasible.

Motahari et al \cite{MR3245356} proposed a scheme based on the method introduced by Cadambe et al  \cite{MR2451005}, which  simultaneously at each receiver realises a {\em partial  alignment} that is effectively arbitrarily close to perfect alignment. The basic idea is to split the messages $u_j$ into `blocks' and apply different linear encodings to each `block'. As it happens, there is a choice of encodings that allows for all but a few of the received `blocks' to  be appropriately aligned as each receiver. On increasing the number of blocks one can approach perfect alignment with arbitrary accuracy.   We now provide the details of the alluded scheme within  the context of Example~3.  Recall,  the user $S_j$ ($j=1,2,3$) wishes to send a message $u_{j}\in\{0,\dots,Q\}$ to receiver $R_j $.   In the first instance, given an integer $B \ge 2$ we let
$$
u_{j,\vvv s}\in\{0,\dots,B-1\}
$$
be a collection of `blocks' that determine (up to order) the coefficients in the base $B$ expansion of $u_j$.
Here and throughout, for $m, k \in \N$
$$
\vvv s=(s_1,\dots,s_m)\in\cS_k:=\{0,\dots,k-1\}^m
$$
is a multi-index which is used to enumerate the blocks -- in a moment  we will take $m=6$.
Clearly, the number of different blocks (i.e. digits available to us when considering the base $B$ expansion of a number) is equal to $$M:=k^m$$ and so the size of the message $u_j$ that $S_j$ can send to  $R_j $ is bounded above by  $B^M-1$.  Without loss of generality, we can assume that
\begin{equation} \label{QBM}  Q = B^M-1  \, .
\end{equation}
Now, instead of transmitting \eqref{eqnnew22}, after  encoding $S_j$ transmits the message
\begin{equation}\label{xij}
x_{j}=\lambda\sum_{\vvv s\in\cS_k}\vvv T^{\vvv s}u_{j,\vvv s}\, .
\end{equation}
Here and throughout, for $ \vvv s \in\cS_k$
\begin{equation}\label{monom}
\vvv T^{\vvv s}:=T_{1}^{s_1}\cdots T_{m}^{s_m}
\end{equation}
are real parameters called {\em  transmit directions} obtained from a fixed finite set
$$
\vvv T:=\{T_{1},\dots,T_{m}\}
$$
of positive real numbers, called {\em generators}. As we shall soon see, the  generators will be  determined by the channel coefficients.  In short, they play the role the positive real numbers $\alpha_{j}$ appearing in  the  encoding leading  to \eqref{eqnnew22}. It is worth highlighting  that the (linear) encoding leading to \eqref{xij}  varies from block to block.  It follows that with this more sophisticated `block' setup, instead of \eqref{eqnnew25}, the signal received by $R_i$ before noise is taken into account is given by
\begin{align}
\nonumber y_i   & =  \sum_{j=1}^3h_{ij}x_j ~\stackrel{\eqref{xij}}{=}~\lambda\sum_{j=1}^3h_{ij}\underbrace{\sum_{\vvv s\in\cS_k}\vvv T^{\vvv s}u_{j,\vvv s}}_{x_j}\\
&=\lambda\left(\rule{0ex}{5ex}\right.\underbrace{\sum_{\vvv s\in\cS_k}h_{ii}\vvv T^{\vvv s}u_{i,\vvv s}}_{\text{wanted at $R_i$}} ~+~ \underbrace{\sum_{\substack{j=1\\ j\neq i}}^3\sum_{\vvv s\in\cS_k}h_{ij}\vvv T^{\vvv s}u_{j,\vvv s}}_{\text{unwanted at $R_i$}}\left.\rule{0ex}{5ex}\right)\,.\label{eqnnew30}
\end{align}
Thus, the unwanted message blocks $u_{j,\vvv s}$ from $S_j$ ($j\neq i$) arrive at $R_i$ with the transmit directions $\vvv T^{\vvv s}$ multiplied by two possible channel coefficients $h_{ij}$. It follows that the unwanted blocks appearing in \eqref{eqnnew30} constitute a  linear form with $2M=2 k^m $ terms.   We now choose the generators in such a way so as to align some of these unwanted blocks  with the net effect of reducing the number of terms in the linear form.  With this in mind, define the set of generators to be the collection of all channel  coefficient with $ i \neq j$; namely
\begin{equation}\label{specialT}
\vvv T=\{h_{12},h_{13},h_{21},h_{23},h_{31},h_{32}\}\,.
\end{equation}
Thus, $m=6$ with respect to the general description above.  With this choice of generators, it follows that the unwanted  part within \eqref{eqnnew30} can now  be written as
\begin{equation}\label{eq103}
\sum_{\vvv s\in\cS_{k+1}}\vvv T^{\vvv s}v_{i,\vvv s}   \,
\end{equation}
where the terms $$v_{i,\vvv s} \in\{0,\dots,2B-2\}$$ are integers formed as sums of up to two blocks $u_{j,\vvv s}$.
Note that the coefficients of $v_{i,\vvv s}$ are monomials in the generators given by \eqref{specialT}. Due to the multiplication by $h_{ij}$ in \eqref{eqnnew30} the exponents in the monomials appearing in \eqref{eq103} are up to $k$ rather than just $k-1$.  This explains why the summation in \eqref{eq103} is taken over $\cS_{k+1}$ rather than just  $\cS_k$.  The upshot of choosing $\vvv T$ as in \eqref{specialT} is that the `unwanted' linear form of $ 2M=2k^6 $ terms appearing in \eqref{eqnnew30} is replaced by a linear form given by \eqref{eq103} of $(k+1)^6=M(1+1/k)^6$ terms. In other words,  asymptotically (as $k$ increases) we have halved the number of terms associated with unwanted message blocks.  On substituting \eqref{eq103} into \eqref{eqnnew30} we get that
\begin{align}
y_i =\lambda\left(\rule{0ex}{5ex}\right.\underbrace{\sum_{\vvv s\in\cS_k}h_{ii}\vvv T^{\vvv s}u_{i,\vvv s}}_{\text{wanted at $R_i$}} ~+~ \underbrace{\sum_{\vvv s\in\cS_{k+1}}\vvv T^{\vvv s}v_{i,\vvv s}}_{\text{unwanted at $R_i$}}\left.\rule{0ex}{5ex}\right)\,.\label{eqnnew30+}
\end{align}
Thus, $y_i$ is a linear form of $$M':=k^6 +(k+1)^6$$ terms\footnote{Observe that essentially half of the terms in \eqref{eqnnew30+} are wanted at $R_i$  compared to only a third (before alignment) in  \eqref{eqnnew30} or indeed in \eqref{ub1}.}.   Up to the factor $\lambda$, the coefficients of the integers $ u_{i,\vvv s}$ and $ v_{i,\vvv s} $ in \eqref{eqnnew30+} are monomials in the six generators of $ \vvv T$ and are all different. It is convenient to  represent  these coefficients as a `coefficient' vector
\begin{equation}\label{G_i}
\vvv G_i:=(G_{i,0},G_{i,1},\dots,G_{i,n})   \quad {\rm where } \quad  n:=M'-1 \, .
\end{equation}
To reiterate, the components $G_{i,0},G_{i,1},\dots,G_{i,n}$ are the real numbers
\begin{equation}\label{eqn105}
\vvv T^{\vvv s}~\text{ with  \ }\vvv s\in\cS_{k+1} \quad \text{and}\quad h_{ii}\vvv T^{\vvv s} ~\text{ with  \ }\vvv s\in\cS_k
\end{equation}
written in any fixed order.
It is easily verified that for any $\epsilon >0$, for $ k$ sufficiently large
\begin{equation} \label{nM}
 2M <  n  < 2M + \epsilon  \, .
\end{equation}
Now let
\begin{equation}\label{newxi}
\bm\xi_i=(\xi_{i,1},\dots,\xi_{i,n}):= \Big(\frac{G_{i,1}}{G_{i,0}},\dots,\frac{G_{i,n}}{G_{i,0}}\Big)   \qquad(1\le i\le 3)\,.
\end{equation}

Returning to \eqref{eqnnew30+}, it is easily seen that there are potentially $B^{M'}$ distinct outcomes of $y_i$ and as before  (cf. \eqref{ub1})
\begin{equation}\label{ub1sv}
0\le y_i\ll 2 \lambda B\qquad(1\le i\le 3),
\end{equation}
where the implicit implied constants depend on the maximum of the channel coefficients $h_{ij}$ and the integer $k$.  Now let $\dminVBi$ denote the minimal distance between the outcomes of $y_i$ given by \eqref{eqnnew30+}.
It is then easily verified, based on  these  outcomes being equally spaced, that  the minimal distance satisfies the following inequality (cf. \eqref{ub2})
\begin{equation}\label{ub2sv}
  \dminVBi\ll\frac{\lambda B}{B^{M'}} =  \frac{\lambda}{B^{n}}  \, \le  \,  \frac{\lambda}{Q^{2}}   \qquad(1\le i\le 3)\,.
\end{equation}
The last inequality makes use of \eqref{QBM} and \eqref{nM}.
Recall, that our goal is the same as in all previous examples.   We wish to obtain lower bounds for  $\dminVBi$ that are both ``close'' to this ``theoretic'' upper bound and at the same time are  valid for a large class of possible choices of channel coefficients.  As we have seen in Examples~1~$\&$~2, the goal is intimately related to the Diophantine properties of certain points defined via the channel coefficients.  Within the context of Example~3, the points of interest are  precisely those corresponding to $\bm\xi_i \in \R^n$ as given by \eqref{newxi}. In \S\ref{IloveMarking}, we will  demonstrate that this is indeed the case by calculating the  DoF of the three-user Gaussian Interference Channel (GIC).   First we make an important observation: {\em the coordinates of each point $\bm\xi_i$   ($ i =1,2,3 $) are functions of seven variables and are therefore dependent}.  The latter follows since $ k \ge 1 $ and so by definition $n \ge 2^6 > 7 $.
The fact that the point $\bm\xi_i$ of interest is of dependent variables implies that $\bm\xi_i$ lies on a submanifold $\cM$ of $\R^n$ of dimension strictly smaller that $n$.    Trivially, since the dimension of $\cM$ is strictly less than $n$, we have that the $n$-dimension Lebesgue measure of $\cM$ is zero. The upshot of the dependency is that all the measure theoretic Diophantine approximation results (such as those concerning badly approximable, $\psi$-approximable, Dirichlet improvable, singular, etc etc)  that we have exploited so far in our analysis of Examples~1~$\&$~2 are pretty much redundant.  We need a theory which takes into account that the points of interest lie on a submanifold $\cM$ of $\R^n$.  Luckily, today the metric theory of Diophantine approximation on manifolds is in reasonable shape.  Indeed, for a large class of so called non-degenerate manifolds there exists
\begin{itemize}
  \item[{\rm(i)}]
  \,\,\,\,a rich  badly approximable theory concerning $\Bad(n) \cap \cM $ -- see for example \cite{MR3733884,MR3231023,MR3425389, BNYV,BNY1,MR3990196} and references within,
  \item[{\rm(ii)}] \,\,\,\,a rich  $\psi$-approximable theory concerning $\cW_{n}(\psi)  \cap \cM $ -- see for example \cite{MR3545930, MR1905790, MR1944505, MR1829381, MR3980277, MR2156655, MR2805874, MR2812651, MR3310488, MR1982150, MR1652916} and references within,  and
  \item[{\rm(iii)}] \,\,\,\,a rich  Dirichlet improvable theory concerning $\DI(n) \cap \cM $  -- see for example \cite{MR3975500,MR2366229,MR2534098} and references within.
\end{itemize}
For a general overview of the manifold theory we refer the reader to \cite[Section~6]{MR3618787}.   In short, the recent state of the art results for the sets just listed suffice to implement  the approaches taken in \S\ref{BAD20} to \S\ref{eyesight} within the context of Example~3.  As already mentioned,  we will shortly provide the details of how the `Khintchine-Groshev' approach of \S\ref{sv2.4} translates to the current setup.

Observe  that in above list of Diophantine sets restricted to $\cM$  there is a notable exception.  We have not mentioned  singular (resp. jointly singular) sets $\Sing(n)$  (resp. $\Sing^2(n)$)  and in turn  we have avoided mentioning  the approach  taken in \S\ref{singSV} that enables us to improve the result of Motahari et al on the DoF of a two-user X-channel.  The reason for this is simple --  our current knowledge of  $\Sing(n)  \cap \cM $ is not sufficient. We will come back to this in \S\ref{open}.

\subsection{The Khintchine-Groshev theorem for manifolds and DoF} \label{IloveMarking}
   The goal of this section is twofold.    The first is to introduce the analogue of the Khintchine-Groshev Theorem for one linear form (i.e. Theorem~\ref{KGT}  in \S\ref{sv2.4}) in  which the points of interest are restricted to a submanifold of $\R^n$.    The second is to exploit this so called Khintchine-Groshev theorem for manifolds to calculate the DoF of the three-user GIC considered in Example~3.

Let   $\cM$ be a submanifold of  $\R^n$  and  let $\cW_{n}(\psi)$ be  the set
of $\psi$-approximable points in $\R^n$ defined by \eqref{Wn}.   In short, if the manifold  is ``sufficiently'' curved the   Khintchine-Groshev theorem for manifolds provides a `zero-one' criterion for the Lebesgue measure of the set
$$
\cW_n(\psi)\cap\cM   \, .
$$
Observe that if the dimension of  the manifold is strictly less than $n$,
then with respect to  $n$-dimensional Lebesgue measure we trivially have that $|\cW_n(\psi)\cap\cM|_n=0$ irrespective of the
approximating function $\psi$. Thus, when referring to the Lebesgue
measure of the set $ \cW_n(\psi)\cap\cM $ it is always  with
reference to the induced Lebesgue measure on $\cM$. More
generally, given a subset $S$ of $\cM$ we shall write
$|S|_{\cM} $ for the  measure of $S$ with respect to the induced
Lebesgue measure on $\cM$. Without loss of generality, we will assume that $$|\cM|_\cM=1$$ since otherwise the induced measure can be re--normalized accordingly.   It is not particularly difficult to show that in order  to  obtain an analogue of Theorem~\ref{KGT} (both the convergence and divergence aspects) for $ \cW_n(\psi)\cap\cM $  we need to avoid  hyperplanes --  see  \cite[Section~4.5]{MR3618787}.  To overcome such natural counterexamples, we insist that $\cM$ is a  {\em non--degenerate} manifold.

\noindent{\em Non--degenerate manifolds. } Essentially, these are smooth
submanifolds of $\R^n$ which are sufficiently curved so as to
deviate from any hyperplane. Formally, a  manifold $\cM$ of
dimension $d$ embedded in $\R^n$ is said to be \emph{non--degenerate} if it
arises from a non--degenerate map $\vvv f: U\to \R^n$ where $U$ is an
open subset of $\R^d$ and $\cM:=\vvv f(U)$. The map $\vvv f: U\to
\R^n,\vvv x\mapsto \vvv f(\vvv x)=(f_1(\vvv x),\dots,f_n(\vvv x))$ is said to be
\emph{$l$--non--degenerate at} $\vvv x\in U$, where $l\in\N$,
if $\vvv f$ is $l$ times continuously differentiable on some
sufficiently small ball centred at $\vvv x$ and the partial derivatives
of $\vvv f$ at $\vvv x$ of orders up to $l$ span $\R^n$. The map $\vvv f$ is
\emph{non--degenerate} at $\vvv x$ if it is $l$--non--degenerate at $\vvv x$ for some $l\in\N$. The map $\vvv f$ is \emph{non--degenerate} if it is
non--degenerate at almost every (in terms of $d$--dimensional Lebesgue
measure) point $\vvv x$ in $U$; in turn the manifold $\cM=\vvv f(U)$ is also
said to be non--degenerate.
It is well known, that any
real connected analytic manifold not contained in any hyperplane of
$\R^n$ is non--degenerate at every point \cite{MR1652916}.  In the case the manifold $\cM$ is a planar curve
$\cC$, a point on $\cC$ is non-degenerate if the curvature
at that point is non-zero.  Moreover,
it is not difficult to show that the set of points on a planar curve
at which the curvature vanishes but the curve is non-degenerate is at
most countable, see \cite[Lemmas~2~\&~3]{MR1387861}. In view of this, the curvature completely describes
the non-degeneracy of planar curves. Clearly, a straight line is
degenerate everywhere.

The convergence part of the following statement was independently established in \cite{MR1905790} and \cite{MR1829381},  while the divergence part was established in \cite{MR1944505}.

\begin{theorem}[Khintchine-Groshev for manifolds]\label{thm17}
Let $\psi:\Rp\to\Rp$ be a  monotonic function and let $\cM$ be a non-degenerate submanifold of $\R^n$. Then
$$
|\cW_n(\psi)\cap\cM|_\cM=\left\{\begin{array}{cl}
0 &\text{if } \ \sum_{q=1}^\infty q^{n-1}\psi(q)<\infty\,,\\[2ex]
1 &\text{if } \ \sum_{q=1}^\infty q^{n-1}\psi(q)=\infty\,.
                       \end{array}
\right.
$$
\end{theorem}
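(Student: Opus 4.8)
The statement splits into the convergence and divergence halves, which require completely different tools; in both one works locally, covering $\cM$ by finitely many coordinate patches, each the image of a non-degenerate map $\vvv f:U\to\R^n$ with $U\subset\R^d$ open, and (since each such $\vvv f$ is bi-Lipschitz onto its image) replacing $|\cdot|_\cM$ by $d$-dimensional Lebesgue measure on $U$. After this reduction the two cases become, respectively, an upper bound and a lower bound for the measure of the pulled-back set of $\vvv x\in U$ for which $|q_1f_1(\vvv x)+\dots+q_nf_n(\vvv x)+p|<\psi(|\vvv q|)$ has infinitely many solutions $(p,\vvv q)$.

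For the convergence case the plan is a Borel--Cantelli argument: writing $\Delta_{p,\vvv q}:=\{\vvv x\in U:|q_1f_1(\vvv x)+\dots+q_nf_n(\vvv x)+p|<\psi(|\vvv q|)\}$, it suffices to show $\sum_{\vvv q\in\Z^n\setminus\{\vvv 0\}}\sum_{p\in\Z}|\Delta_{p,\vvv q}|_d<\infty$, the inner sum being effectively finite. For a fixed $\vvv q$ one divides $U$ into `essential domains' according to the order of the first derivative of the linear form $\vvv x\mapsto\vvv q\cdot\vvv f(\vvv x)$ that is still comparable to $|\vvv q|$: on the domain where the gradient itself is of order $|\vvv q|$, the sublevel set is a genuine codimension-one slab of measure $\ll\psi(|\vvv q|)/|\vvv q|$, so summing over the $\ll|\vvv q|$ relevant $p$ contributes $\ll\psi(|\vvv q|)$; the complementary `bad' set, where the linear form is flat to high order, has small measure by non-degeneracy --- more precisely by the $(C,\alpha)$-good property of Kleinbock--Margulis --- and its total contribution is separately summable. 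Summing over the $\ll q^{n-1}$ vectors $\vvv q$ with $|\vvv q|=q$, and then over $q$ (with a dyadic decomposition in $|\vvv q|$ to accommodate the mere monotonicity of $\psi$), converts $\sum_q q^{n-1}\psi(q)<\infty$ into the required convergence. The special case $\psi(q)=q^{-n-\ve}$ recovers the extremality of non-degenerate manifolds (the conjecture of Sprind\v zuk proved by Kleinbock--Margulis), of which this theorem is the Khintchine-type refinement.

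For the divergence case the plan is to realise the family of rational hyperplanes as a ubiquitous system relative to $\cM$, in the sense of Beresnevich--Dickinson--Velani. The resonant sets are the slices $\mathcal{R}_{p,\vvv q}:=\{\bm\xi\in\cM:q_1\xi_1+\dots+q_n\xi_n+p=0\}$, and the crucial input is a covering estimate: for every ball in $\cM$ and all large $Q$, the $Q^{-n-1}$-neighbourhoods (within $\cM$) of the $\mathcal{R}_{p,\vvv q}$ with $|\vvv q|\le Q$ cover a definite proportion of that ball. Granted this, the general ubiquity machinery --- a quantitative, manifold-adapted divergence Borel--Cantelli lemma exploiting the quasi-independence of the associated events --- upgrades $\sum_q q^{n-1}\psi(q)=\infty$ to $|\cW_n(\psi)\cap\cM|_\cM=1$, the monotonicity of $\psi$ being used both to pass from the counting estimate to `infinitely many solutions' and to keep the events from clustering (as in dimension one it cannot be dropped, cf.\ Duffin--Schaeffer).

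I expect the covering/counting estimate underlying the divergence case to be the main obstacle: it amounts to counting rational hyperplanes passing within the `expected' distance of a curved manifold, and for a general non-degenerate $\cM$ the only known route is through the Kleinbock--Margulis quantitative non-divergence estimates in the space of unimodular lattices $\operatorname{SL}_{n+1}(\R)/\operatorname{SL}_{n+1}(\Z)$, transported via a Dani-type correspondence --- the same circle of ideas behind Theorems~\ref{thm4} and~\ref{thm6}, but requiring the sharp rather than merely qualitative form of non-divergence. The subsidiary technical point, pervasive in both halves, is that $\psi$ is only assumed monotonic, so every counting and Borel--Cantelli step has to be organised into dyadic blocks on which $\psi$ is essentially constant; this is routine but must be done with care, and is exactly where the monotonicity hypothesis enters.
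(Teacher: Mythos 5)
You should first note that the paper offers no proof of Theorem~\ref{thm17}: it is stated as a known result, with the convergence half credited to \cite{MR1905790} and \cite{MR1829381} and the divergence half to \cite{MR1944505}, so there is no internal argument to compare yours against. Judged against those sources, your outline is the standard and correct one: for convergence, work in a coordinate patch $\vvv f:U\to\R^n$, run Borel--Cantelli over the sets $\Delta_{p,\vvv q}$, split $U$ according to whether the gradient of $\vvv x\mapsto \vvv q\cdot\vvv f(\vvv x)$ is comparable to $|\vvv q|$ (the essential-domain method of \cite{MR1905790}) or the form is small together with its lower-order derivatives, where the $(C,\alpha)$-good/quantitative non-divergence technology of Kleinbock--Margulis \cite{MR1652916} controls the measure (the route of \cite{MR1829381}); for divergence, show the rational hyperplanes form a locally ubiquitous system relative to $\cM$ and invoke the ubiquity machinery of \cite{MR2184760}, which is exactly how \cite{MR1944505} proceeds.

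That said, what you have written is a roadmap rather than a proof. The two estimates you yourself flag as the crux --- the measure bound on the ``flat'' part of $U$ where $\vvv q\cdot\vvv f$ and its relevant derivatives are all small, and the lower-bound covering estimate asserting that the $Q^{-n-1}$-neighbourhoods of the resonant sets $\mathcal{R}_{p,\vvv q}$, $|\vvv q|\le Q$, fill a definite proportion of every ball in $\cM$ --- are precisely the substantive content of the cited papers and are asserted, not established, in your sketch. In particular, in \cite{MR1944505} the ubiquity constant is extracted from a convergence-type counting bound (most points of $\cM$ are not too well approximable, so sufficiently many rational hyperplanes must pass at the expected distance), and this derivation, together with the dyadic bookkeeping needed because $\psi$ is only monotonic, is where the real work lies; a complete argument would have to reproduce it rather than defer to the literature the paper already cites.
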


\medskip

\begin{remark}
In view of Corollary~\ref{cor2} in \S\ref{BAD20}, it follows  that
$$
\cW_n(\psi)\cap\cM = \cM  \quad {\rm if   } \quad  \psi: q \mapsto q^{-n} \, .
$$
Now, given $\ve >0$ consider the  function $\psi_\ve: q \mapsto q^{-n-\ve }$.  A  submanifold $\cM$ of $\R^n$ is called {\em extremal} if
$$
\left| \cW_n(\psi_\ve)\cap\cM \right|_{\cM} =0\,.
$$
Sprind\v zuk  (1980) conjectured that any analytic non-degenerate
submanifold  is extremal. In their pioneering work \cite{MR1652916}, Kleinbock $\& $ Margulis proved that any non-degenerate
submanifold $\cM$ of $\R^n$ is extremal and thus established Sprind\v zuk's conjecture.   It is easy to see that this
implies the convergence case of Theorem~\ref{thm17} for functions of the
shape $\psi_\ve$.
\end{remark}

\begin{remark}
For the sake of completeness, it is worth mentioning that the externality theorem for non-degenerate submanifolds of $\R^n$ has been extended in recent years to submanifolds of $n\times m$ matrices, see \cite{MR3777412, MR3346961, MR2679461}.
\end{remark}

An immediate consequence of the convergence case of Theorem~\ref{thm17} is the following statement  (cf. Corollary~\ref{cor3}).

\begin{corollary}\label{cor6}
Let  $\psi:\Rp\to\Rp$ be a function such that
\begin{equation}\label{eqn57+}
  \sum_{q=1}^\infty q^{n-1}\psi(q)<\infty\,.
\end{equation}
Suppose that $\cM$ is as in Theorem~\ref{thm17}. Then, for almost all $\bm\xi \in \cM$ there exists a constant $\kappa (\bm\xi)  >0 $  such that
\begin{equation}\label{eqn58+}
 |q_1\xi_1+\dots+q_n\xi_n+p | \ >  \  \kappa(\bm\xi)  \,   \psi(|\vvv q|)  \qquad   \forall  \    (p,\vvv q) \in \Z \times \Z^{n}\backslash \{\vvv 0\} \,.
\end{equation}
\end{corollary}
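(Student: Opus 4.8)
The plan is to deduce the statement from the convergence case of the Khintchine--Groshev theorem for manifolds (Theorem~\ref{thm17}) applied to two well-chosen approximating functions, in the same spirit as Corollary~\ref{cor3} was obtained from Theorem~\ref{KGT}. Call a point $\bm\xi\in\cM$ \emph{bad} if no constant $\kappa(\bm\xi)>0$ as in \eqref{eqn58+} exists; equivalently, $\bm\xi$ is bad precisely when for every $j\in\N$ there is an integer point $(p_j,\vvv q_j)\in\Z\times\Z^{n}\setminus\{\vvv 0\}$ with
\begin{equation*}
|q_{j,1}\xi_1+\dots+q_{j,n}\xi_n+p_j|\le \tfrac1j\,\psi(|\vvv q_j|)\,.
\end{equation*}
The task is then to show that the set of bad points has induced measure zero on $\cM$.

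First I would introduce the auxiliary function $\tilde\psi(q):=q^{-n-1}$, which is monotonic and satisfies $\sum_{q=1}^\infty q^{n-1}\tilde\psi(q)=\sum_{q=1}^\infty q^{-2}<\infty$; hence by Theorem~\ref{thm17} we have $|\cW_{n}(\tilde\psi)\cap\cM|_\cM=0$. Observe also that any point on a rational hyperplane $q_1x_1+\dots+q_nx_n+p=0$ with $\vvv q\neq\vvv0$ lies in $\cW_{n}(\tilde\psi)$: the integer points $(kp,k\vvv q)$, $k\in\N$, are infinitely many, annihilate the linear form, and $\tilde\psi(k|\vvv q|)>0$. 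Together with the hypothesis $\sum_{q=1}^\infty q^{n-1}\psi(q)<\infty$ and the convergence case of Theorem~\ref{thm17} --- which, exactly as in the classical setting recalled after Theorem~\ref{KGT}, requires no monotonicity since it rests only on the Borel--Cantelli lemma --- this also gives $|\cW_{n}(\psi)\cap\cM|_\cM=0$.

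The heart of the argument is the inclusion
\begin{equation*}
\{\bm\xi\in\cM:\bm\xi\text{ is bad}\}\ \subseteq\ \big(\cW_{n}(\psi)\cap\cM\big)\ \cup\ \big(\cW_{n}(\tilde\psi)\cap\cM\big)\,,
\end{equation*}
which I would prove by a dichotomy on the size of the $\vvv q_j$. If $|\vvv q_j|\le C$ along a subsequence, then $(p_j,\vvv q_j)$ takes only finitely many values along it (as $|p_j|$ is then bounded too), so some fixed $(p,\vvv q)$ recurs with $|q_1\xi_1+\dots+q_n\xi_n+p|\le\tfrac1j\psi(|\vvv q|)\to0$, forcing $q_1\xi_1+\dots+q_n\xi_n+p=0$; here $\vvv q\neq\vvv0$ (otherwise $p\neq0$ and $|p|\le\tfrac1j\psi(0)\to0$, impossible), so $\bm\xi$ lies on a rational hyperplane and hence in $\cW_{n}(\tilde\psi)$. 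If instead $|\vvv q_j|\to\infty$, the points $(p_j,\vvv q_j)$ include infinitely many distinct ones; if $\psi(|\vvv q_j|)>0$ for infinitely many $j$, then for such $j\ge2$ we get $|q_{j,1}\xi_1+\dots+q_{j,n}\xi_n+p_j|\le\tfrac12\psi(|\vvv q_j|)<\psi(|\vvv q_j|)$, witnessing $\bm\xi\in\cW_{n}(\psi)$; while if $\psi(|\vvv q_j|)=0$ for infinitely many $j$, the corresponding forms vanish and, as before, $\bm\xi\in\cW_{n}(\tilde\psi)$. Combining the inclusion with the two measure-zero statements of the previous paragraph shows that the set of bad points is $\cM$-null, which is exactly \eqref{eqn58+} for almost every $\bm\xi\in\cM$.

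I expect this to be entirely routine once Theorem~\ref{thm17} is in hand; the only point needing a little care is that a point can fail \eqref{eqn58+} for two genuinely different reasons --- either because it lies on a rational hyperplane, so the linear form is \emph{exactly} zero for some fixed $(p,\vvv q)$, or because it is $\psi$-approximable with arbitrarily small implied constants --- together with the harmless edge case where $\psi$ attains the value $0$. The auxiliary function $\tilde\psi$ is introduced precisely to absorb the rational-hyperplane locus via a second application of Theorem~\ref{thm17}, so no separate appeal to the fact that a non-degenerate $\cM$ meets every hyperplane in a null set is needed. Alternatively, one may reduce at the outset to monotonic $\psi$, or invoke the manifold analogue of the effective convergence Khintchine--Groshev bound (the counterpart of Theorem~\ref{EKG}), which shows directly that $\big|\{\bm\xi\in\cM:|q_1\xi_1+\dots+q_n\xi_n+p|\le\tfrac1j\psi(|\vvv q|)\text{ for some }(p,\vvv q)\neq\vvv0\}\big|_\cM\ll j^{-1}\sum_{q=1}^\infty q^{n-1}\psi(q)\to0$.
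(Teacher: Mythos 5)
Your overall route is exactly the paper's: Corollary~\ref{cor6} is treated there as an immediate consequence of the convergence half of Theorem~\ref{thm17}, in precisely the way Corollary~\ref{cor3} follows from Theorem~\ref{KGT}, and your dichotomy (either infinitely many distinct witnesses, placing $\bm\xi$ in $\cW_n(\psi)\cap\cM$, or one recurring $(p,\vvv q)$ forcing $q_1\xi_1+\dots+q_n\xi_n+p=0$) is the right way to make ``immediate'' precise. Absorbing the rational-hyperplane locus into $\cW_n(\tilde\psi)\cap\cM$ with $\tilde\psi(q)=q^{-n-1}$ is a tidy device that also covers the edge case $\psi(|\vvv q|)=0$, and it avoids a separate argument that a non-degenerate manifold meets every hyperplane in an $\cM$-null set.

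There is, however, a genuine gap in the step where you apply Theorem~\ref{thm17} to a $\psi$ that is not assumed monotonic. Your justification --- that the convergence case ``rests only on the Borel--Cantelli lemma'' --- is valid for Lebesgue measure on $\R^n$ (this is the computation in the proof of Theorem~\ref{EKG}) but fails on a manifold: the induced measure of $\{\bm\xi\in\cM: |q_1\xi_1+\dots+q_n\xi_n+p|<\delta\}$ is not in general $\ll\delta$, because the hyperplane may be nearly tangent to $\cM$ (for the parabola $\bm\xi=(x,x^2)$ and the form $\xi_2$, the measure is comparable to $\sqrt{\delta}$). Controlling these near-tangent contributions is exactly the non-trivial content of Theorem~\ref{thm17}, which is stated (and proved in the cited works) for monotonic $\psi$. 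Your suggested repairs do not close this: a summable non-monotonic $\psi$ need not admit a monotonic majorant $\Psi\ge\psi$ with $\sum_q q^{n-1}\Psi(q)<\infty$ (take $\psi$ supported on a very sparse sequence of $q$), and Theorem~\ref{EKGM} likewise assumes $\psi$ monotonically decreasing. So as written your argument proves the corollary for monotonic $\psi$ --- which suffices for the application $\psi(q)=q^{-n-\ve}$ made in the text, and matches what the paper implicitly does --- but not for arbitrary $\psi$ as stated; for that one must either add monotonicity to the hypothesis or appeal to the convergence theory on manifolds without monotonicity, a point the paper itself glosses over in calling the statement immediate.
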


In line with the discussion in \S\ref{sv2.4} preceding the statement of  the effective convergence Khintchine-Groshev theorem (i.e. Theorem~\ref{EKG}),
a natural question to consider is: {\em can the constant $\kappa(\bm\xi)$ within Corollary~\ref{cor6} be made independent of $\bm\xi$?} The argument involving  the set  $\cB_n(\psi,\kappa)$  given by \eqref{eqn61} can be modified to show that this is impossible to guarantee with probability one; that is, for almost all $\bm\xi \in \cM$. Nevertheless, the  following result  provides an effective solution to the above question. It is a special case of \cite[Theorem~3]{MR3545930}.

\begin{theorem}[Effective convergence Khintchine-Groshev for manifolds] \label{EKGM}
Let $l\in\N$ and let $\cM$ be a compact $d$--dimensional $C^{l+1}$ submanifold of $\R^n$ that is $l$--non--degenerate at every point. Let $\psi:\Rp\to\Rp$ be a monotonically decreasing function such that
\begin{equation} \label{def_S_Psi}
\Sigma_\psi:=\sum_{q=1} q^{n-1}\psi(q)<\infty\,.
\end{equation}
Then there exist positive constants $\kappa_0,C_1$ depending on $\psi$ and $\cM$ only and $C_0$ depending on the dimension of $\cM$ only such that for any $0<\delta<1$, the inequality
\begin{equation} \label{theorem_linear_forms_ie_statement_v2++}
|\cB_n(\psi,\kappa)\cap\cM|_\cM\ge 1-\delta
\end{equation}
holds with
\begin{equation}\label{kappavb}
\kappa:=\min\left\{\kappa_0,\ \frac{C_0 \delta}{\Sigma_\psi},\ C_1\delta^{d(n+1)(2l-1)}\right\}\,.
\end{equation}
\end{theorem}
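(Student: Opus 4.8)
The plan is to bound the measure of the complement $\cM\setminus\cB_n(\psi,\kappa)$ by a union bound over ``resonant'' sets and then to split the resulting series into a finite \emph{head} — in which one tolerates a power loss but has only boundedly many terms — and an infinite \emph{tail} — in which one recovers the flat rate $\kappa\psi(|\vvv q|)$ and the sum is controlled by $\Sigma_\psi$. Concretely, for $\vvv q\in\Z^n\setminus\{\vvv 0\}$ I would put $E_{\vvv q}:=\{\bm\xi\in\cM:|q_1\xi_1+\dots+q_n\xi_n+p|\le\kappa\psi(|\vvv q|)\text{ for some }p\in\Z\}$, so that $\cM\setminus\cB_n(\psi,\kappa)\subset\bigcup_{\vvv q\ne\vvv 0}E_{\vvv q}$ and it suffices to arrange $\sum_{\vvv q\ne\vvv 0}|E_{\vvv q}|_\cM\le\delta$. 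First I would localise: using compactness and $l$-non-degeneracy at every point, cover $\cM$ by finitely many charts, each the image of an $l$-non-degenerate $C^{l+1}$ embedding $\vvv f:B\to\R^n$ of a ball $B\subset\R^d$; since $|\cM|_\cM=1$ it is enough to estimate, for each chart, the Lebesgue measure of $\vvv f^{-1}(E_{\vvv q})=\{\vvv x\in B:\operatorname{dist}(\vvv q\cdot\vvv f(\vvv x),\Z)\le\kappa\psi(|\vvv q|)\}$ and to sum the finitely many contributions.

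Two uniform geometric facts would then be recorded. First, since $\vvv f$ is non-degenerate, $1,f_1,\dots,f_n$ are linearly independent on $B$, so $\vvv c\mapsto\sup_B|c_0+c_1f_1+\dots+c_nf_n|$ is a norm on $\R^{n+1}$ and hence, by compactness, comparable to the Euclidean norm; in particular $\sup_B|\vvv q\cdot\vvv f+p|\gg\|(\vvv q,p)\|\ge 1$ with implied constant depending only on $\cM$. Second, by the quantitative non-degeneracy (``$(C,\alpha)$-good'') estimates of Kleinbock--Margulis type there are constants $C$ and $\alpha=\tfrac{1}{d(n+1)(2l-1)}$, depending only on $d,n,l$ and the $C^{l+1}$-geometry of $\cM$, such that every $\phi=c_0+c_1f_1+\dots+c_nf_n$ is $(C,\alpha)$-good on $B$, i.e.\ $|\{\vvv x\in B:|\phi(\vvv x)|<\eta\sup_B|\phi|\}|\le C\eta^{\alpha}|B|$ for all $\eta>0$.

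Next I would fix a threshold $Q_0=Q_0(\cM,\psi)\ge 1$ and split the sum. For the head $1\le|\vvv q|\le Q_0$ there are $\ll|\vvv q|\le Q_0$ integers $p$ giving a non-empty slice, and for each the two facts above give $|\{\vvv x\in B:|\vvv q\cdot\vvv f(\vvv x)+p|\le\kappa\psi(|\vvv q|)\}|\ll(\kappa\psi(|\vvv q|)/\sup_B|\vvv q\cdot\vvv f+p|)^{\alpha}\ll(\kappa\psi(1))^{\alpha}$ (using that $\psi$ is decreasing); summing over the finitely many $(\vvv q,p)$ yields a head bound $\ll_{Q_0,\cM,\psi}\kappa^{\alpha}$. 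For the tail $|\vvv q|>Q_0$ I would invoke the flat rate $|\{\vvv x\in B:\operatorname{dist}(\vvv q\cdot\vvv f(\vvv x),\Z)\le\kappa\psi(|\vvv q|)\}|\ll\kappa\psi(|\vvv q|)$ with implied constant depending only on $\cM$: this is precisely the quantitative estimate underpinning the convergence case of Theorem~\ref{thm17}, obtained by decomposing $B$ according to where $\nabla(\vvv q\cdot\vvv f)$ and then its higher derivatives up to order $l$ are bounded below by suitable multiples of $|\vvv q|$ — on the ``submersion'' part the co-area formula together with the fact that $\vvv q\cdot\vvv f(B)$ is an interval of length $\asymp|\vvv q|$ gives the bound, and one recurses on the remaining parts via the $(C,\alpha)$-good property; here $Q_0$ and the cap $\kappa_0$ are chosen large, resp.\ small, enough for this regime to apply. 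Summing gives $\sum_{|\vvv q|>Q_0}|E_{\vvv q}|_\cM\ll\kappa\sum_{|\vvv q|>Q_0}\psi(|\vvv q|)\asymp\kappa\sum_{q>Q_0}q^{n-1}\psi(q)\le\kappa\,\Sigma_\psi$. Combining the two ranges, $\sum_{\vvv q\ne\vvv 0}|E_{\vvv q}|_\cM\le A\kappa^{\alpha}+B\kappa\,\Sigma_\psi$ for constants $A,B$ depending only on $\cM$ and $\psi$ (with, after careful bookkeeping, the leading tail constant $B$ depending only on $d$); imposing $\kappa\le\kappa_0$, $\kappa\le\delta/(2B\Sigma_\psi)$ and $\kappa\le(\delta/(2A))^{1/\alpha}=(\delta/(2A))^{d(n+1)(2l-1)}$ forces this expression to be $\le\delta$, which is exactly the choice \eqref{kappavb} with $C_0=1/(2B)$ and $C_1=(2A)^{-d(n+1)(2l-1)}$. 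Hence $|\cB_n(\psi,\kappa)\cap\cM|_\cM=1-|\cM\setminus\cB_n(\psi,\kappa)|_\cM\ge 1-\delta$.

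The main obstacle is the tail estimate: establishing the flat rate $\ll\kappa\psi(|\vvv q|)$ for all large $\vvv q$ \emph{with an implied constant independent of $\kappa$ and $\delta$} and depending only on the fixed $C^{l+1}$-geometry of $\cM$. This is where the machinery behind the convergence Khintchine--Groshev theorem on manifolds — the $(C,\alpha)$-good functions of Kleinbock--Margulis, together with the Khintchine-type estimates of Bernik--Kleinbock--Margulis (equivalently, the counting of rational points near $\cM$) — must be run effectively; the delicate points are to track every constant through the recursive decomposition and to isolate the genuinely exceptional finite set of small $\vvv q$, which alone is responsible for the power-type dependence $\kappa\asymp\delta^{d(n+1)(2l-1)}$ in \eqref{kappavb}.
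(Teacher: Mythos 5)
You should first note that the paper does not prove Theorem~\ref{EKGM} at all: it is quoted as a special case of \cite[Theorem~3]{MR3545930} (Adiceam--Beresnevich--Levesley--Velani--Zorin), so your sketch has to be measured against the effective Bernik--Kleinbock--Margulis machinery developed there rather than against anything in this chapter.

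There is a genuine gap in your tail estimate. You claim that for every $|\vvv q|>Q_0$ one has the per-$\vvv q$ flat bound $|E_{\vvv q}|_{\cM}\ll\kappa\psi(|\vvv q|)$ with implied constant depending only on $\cM$. This is false for non-degenerate manifolds: whenever $\vvv x\mapsto \vvv q\cdot\vvv f(\vvv x)$ has a critical point in the chart whose critical value is close to an integer, the corresponding sublevel set has measure of order $\sqrt{\kappa\psi(|\vvv q|)/|\vvv q|}$, which exceeds $\kappa\psi(|\vvv q|)$ as soon as $\kappa\psi(|\vvv q|)\,|\vvv q|<1$. For instance, with $\vvv f(x)=(x,x^2)$ on a ball around $0$ and $\vvv q=(0,q)$ one has $E_{\vvv q}\supset\{|x|<\sqrt{\kappa\psi(q)/q}\}$; such $\vvv q$ occur with $|\vvv q|$ arbitrarily large, so no choice of $Q_0$ rescues the claim, and even the summed contribution of these terms is of size $\kappa^{1/2}$, not $\kappa\Sigma_\psi$. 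The decomposition actually used in \cite{MR3545930} (following Bernik--Kleinbock--Margulis) is not by the size of $|\vvv q|$ but by the size of $|\nabla(\vvv q\cdot\vvv f)(\vvv x)|$: on the large-gradient part one does get the flat bound, and summing it over all $\vvv q$ produces the $\kappa\Sigma_\psi$ term, i.e.\ the constraint $\kappa\le C_0\delta/\Sigma_\psi$; the small-gradient part, which persists for arbitrarily large $|\vvv q|$, cannot be treated $\vvv q$ by $\vvv q$ at all --- one must bound the measure of the union over all $(p,\vvv q)$ in a whole (dyadic) range simultaneously via the Kleinbock--Margulis quantitative non-divergence theorem for $(C,\alpha)$-good functions, and it is this part, not a finite head of small $\vvv q$, that yields the contribution of order $\kappa^{1/(d(n+1)(2l-1))}$ and hence the constraint $\kappa\le C_1\delta^{d(n+1)(2l-1)}$. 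Your identification of $\alpha$ and the final bookkeeping $A\kappa^{\alpha}+B\kappa\Sigma_\psi\le\delta$ have the right shape, but as written the argument attributes the power-of-$\delta$ term to the wrong range of $\vvv q$ and rests on a per-$\vvv q$ tail estimate that fails.
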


\medskip

\begin{remark}
The constants appearing in \eqref{kappavb} are explicitly computable, see \cite[Theorem~6]{MR3545930} for such a  statement. In \cite{MR3980277} Theorem~\ref{EKGM} was also extended to a natural class of affine subspaces, which by definition are degenerate.
\end{remark}

We now move onto our second goal: to exploit the Khintchine-Groshev theorem for manifolds to calculate the DoF of the three-user GIC considered in Example~3.    The overall approach is similar to that used in \S\ref{sv2.4} to calculate the DoF of the two-user X-channel considered in Example~2.  In view of this we will keep the following exposition rather brief and refer the reader to   \S\ref{sv2.4} for both the motivation and the details.   With this in mind, let $\cM$ denote the $7$-dimensional submanifold of $\R^n$ arising from the implicit dependency within \eqref{newxi}.  In other words,  a point $\bm\xi_i \in \cM$ if and only if it is of the form \eqref{newxi}. That $\cM$ is of dimension $7$ follows from the fact that the monomials $G_{i,0},G_{i,1},\dots,G_{i,n}$ depend on $h_{ii}$ and the other $6$ channel coefficients that form the set $\vvv T$ of generators. It is also not difficult to see that these monomials are all different and therefore linearly independent over $\R$. Consequently, $1,\xi_{i,1},\dots,\xi_{i,n}$ are linearly independent over $\R$ as functions of the corresponding channel coefficients. Hence $\cM$ cannot be contained in any hyperplane of $\R^n$. Also note that $\cM$ is connected and analytic, and therefore, it is non-degenerate.

Now suppose that
\begin{equation}\label{eqn106}
\bm\xi \not\in\cW_{n}(\psi)
\end{equation}
where  $\psi: q \to q^{-n-\ve}$ for some $\ve>0$. Then,  Corollary~\ref{cor6} implies that for almost all  $\bm\xi\in\cM$ there exists a constant $\kappa(\bm\xi)>0$ such that
$$
\left|q_1\xi_1+\dots+q_n\xi_n+p\right|\ge\frac{\kappa(\bm\xi)}{|\vvv q|^{n+\ve}}
$$
for all  $ (p,\vvv q) \in \Z \times \Z^{n}\backslash \{\vvv 0\}$.
Here and throughout the rest of this section, almost all is  with respect to $7$-dimensional Lebesgue measure induced on $\cM$. In particular, it follows that for almost all $\bm\xi\in\cM$ and every $B\in\N$ we have that (cf. \eqref{eqn59})
\begin{equation}\label{eqn59sv}
\left|q_1\xi_1+\dots+q_n\xi_n+p\right|\ge\frac{\kappa(\bm\xi)}{B^{n+\ve}}
\end{equation}
for all $ (p,\vvv q) \in \Z \times \Z^{n}\backslash \{\vvv 0\}$ with $1\le|\vvv q|\le B$.    Then, the analysis as  in \S\ref{sv2.4} that leads to \eqref{eqn60}, enables us to make the following analogous statement:  with probability one, for every $B \ge 2$ and  a random choice of channel coefficients $h_{ij}$  $(i,j=1,2,3)$,  the minimum separation between the associated  points $y_i$ given by \eqref{eqnnew30+}  satisfies
\begin{equation}\label{eqn108}
\dminVBi \gg     \frac{ \lambda   \, \kappa(\bm\xi_i)  \ }{B^{n+ \ve} }   \qquad(1\le i\le 3)\,.
\end{equation}
We stress, that $ \bm\xi_i  $  corresponds to the point given by \eqref{newxi} associated with the choice of channel coefficients.  Recall, that the latter determine the set of generators \eqref{specialT} which in turn  determine the coefficient vector $\vvv G_i$ and therefore the point $ \bm\xi_i$.
Note that apart from  the extra  $\ve$ term in the power, the lower bound \eqref{eqn108} coincides (up to constants)  with the upper bound  \eqref{ub2sv}.

Now, in relation to Example 3, the power constraint $P$ on the channel model  means that
\begin{equation}\label{eqn62+}
\text{$|x_j|^2\le P$  \qquad ($j=1,2,3$)} \, ,
\end{equation}
where $ x_j$ is the codeword transmitted by $S_j$  as given by \eqref{xij}.   Now notice that since the blocks $
u_{j,\vvv s}  $  ($\vvv s  \in \cS_k$) are integers lying in $\{0,\dots,B-1\}$, it follows that
\begin{equation*}\label{eq102}
|x_j| \ll \lambda B\,,
\end{equation*}
where the implied implicit constant is independent from $B$ and $\lambda$.
Hence, we conclude that  $P$ is comparable to $(\lambda B)^2$.
It is  shown in \cite[\S5]{MR3245356}, that the probability of error in transmission within Example~3 is bounded above by
\eqref{eqn63} with
$$\dminVB=\min\{\dminVBone,\dminVBtwo,\dminVBthree\}.$$

\noindent Recall,  in order to achieve reliable transmission one requires that this probability  tends to zero as  $P\to\infty$. Then, on  assuming \eqref{eqn108} -- which holds for almost every $\bm\xi_i \in \cM$ --  it follows that
\begin{equation}\label{eqn64+}
\dminVB  \gg \frac{\lambda}{B^{n+\ve}}\,,
\end{equation}
and so the quantity \eqref{eqn63} will tend to zero as $B\to\infty$ if we set
$$\lambda=B^{n+2\ve} \, . $$
The upshot of this is that we will achieve a reliable transmission  rate under the power constraint \eqref{eqn62+} if we set $P$ to be comparable to $B^{2n+2 + 4 \ve}$; that is
$$
 B^{2n+2 + 4 \ve}  \ll P  \ll B^{2n+2 + 4 \ve} \, .
$$

\noindent Next, recall that the largest message $u_j$ that user $S_j $ can send to  $R_j $  is given by \eqref{QBM}.  Thus, it follows that the number of bits (binary digits) that user $S_j$ transmits is approximately
\begin{equation*}\label{bits}
\log B^{M}=M\log B\,.
\end{equation*}
Therefore, in total the three users $S_j$  ($j=1,2,3$)  transmit approximately
$3M \times \log B$ bits, which with our choice of $P$ is an  achievable total rate of reliable transmission; however, it may not be maximal.
On comparing this to the rate of reliable transmission for the simple  point to point channel under the same power constraint, we get that the total DoF of the three-user GIC  is at least
\begin{equation}\label{eqn65+}
\lim_{P\to\infty}\frac{3M\log B}{\frac12\log(1+P)}=\lim_{B\to\infty}\frac{3M\log B}{\frac12\log(1+B^{2n+2+4\ve})}=\frac{3M}{n+1+2\ve}   \, .
\end{equation}
Given that $\ve>0$ is arbitrary, it follows that for almost every (with respect to the $7$-dimensional Lebesgue measure) realisation of the channel coefficients
  $$
  {\rm DoF} \ge \frac{3M}{n+1} \, .
  $$
  Now recall that $n+1=M'=(k+1)^6+k^6$ and $M=k^6$.  On substituting these values into the above lower bound,  we obtain  that
  $$
  {\rm DoF} \ge \frac{3k^6}{(k+1)^6+k^6} \, .
  $$
Given that $k$ is arbitrary,   it follows (on letting  $k\to\infty$) that
for almost every realisation of the channel coefficients
  $$
  {\rm DoF} \ge \frac{3}{2} \, .
  $$
Now it was shown in \cite{MR2451005} that the DoF of a three-user GIC
is upper bounded by $3/2$ for all choices of the channel coefficients, and so it follows that for almost every realisation of the channel coefficients
\begin{equation}\label{eqn66+}
{\rm DoF}=\frac32\,.
\end{equation}

\subsection{Singular and non-singular points on manifolds}
\label{open}  With reference to Example 3, we have seen in the previous section that the Khintchine-Groshev theorem for non-degenerate manifolds allows us to achieve good separation between the received signals  $y_i$ given by \eqref{eqn108}.  More precisely, for almost all choices of the channel coefficients $h_{ij}$ $ (i,j =1,2,3)$ we obtain the lower  bounds \eqref{ub2sv} for the  minimal distances $\dminVBi$   that are only `$\ve$-weaker' than the `theoretic'' upper bounds  as given by \eqref{eqn108}.
%With reference to Example 3, we have seen in the previous section that the Khintchine-Groshev theorem for non-degenerate manifolds allows us to achieve good separation for the minimal distances  (i.e., lower  bounds for $\dminVBi$  given by \eqref{ub2sv} that are only `$\ve$-weaker' than the `theoretic'' upper bounds  given by \eqref{ub2sv}) for almost all choices of the channel coefficients $h_{ij}$ $ (i,j =1,2,3)$.
%
As in the discussion at the start of  \S\ref{singSV}, this motivates  the   question of {\em whether good separation and indeed if the total DoF of 3/2 for the three-user GIC can be achieved for a larger class of channel coefficients?}  Concerning the latter, what we have in mind is a statement along the lines of Theorem~\ref{MotDOF2} that improves the  Motahari et al result (Theorem~\ref{MotDOF}) for the total DoF of the two-user $X$-channel.  Beyond this, but  still in a similar vein,  one can ask if the more general DoF results of Motahari et al  \cite{MR3245356} for  communications channels involving  more users and receivers can be improved?   Clearly, the approach taken in \S\ref{singSV} and \S\ref{FR}  based on the Diophantine approximation theory of non-singular and jointly non-singular points can be utilized to make the desired improvements. However there is a  snag -- we would require the existence of such a theory in which the points of interest are restricted to  non-degenerate manifolds.  Unfortunately, the analogues of Theorems~\ref{thm6}, \ref{prob1}, \ref{prob3}, \ref{prob5} and \ref{prob5+} for manifolds are not currently available.  In short, obtaining any such statement represents a  significant open problem in the theory of Diophantine approximation on manifolds.  Indeed, even partial statements such as the following  currently seem out of reach.  As we shall see, it has non-trivial implications for both number theory  and wireless communication.

\begin{problem}\label{prob3+}
Let $n\ge2$ and $\cM$ be any analytic non-degenerate  submanifold of $\R^n$  of dimension $d$. Verify if
\begin{equation}\label{e122}
\dim\big(\Sing(n)\cap\cM\big)< d := \dim\big( \cM\big)  \, .
\end{equation}
\end{problem}

\noindent Recall, that  $\Sing(n)$ is the set of  singular points in $\R^n$ - see Definition \ref{def5}~in~\S\ref{singSV}.

\begin{remark}
Determining  the actual value for the Hausdorff dimension of the set $\Sing(n)\cap\cM$ for special  classes of submanifolds $\cM$  (such as polynomial curves -- see below) would be most desirable.   It is not difficult to see that the intersection of $\cM$ with any rational hyperplane is contained in $\Sing(n)$. Therefore,
\begin{equation*}\label{vb680}
\dim\big(\Sing(n)\cap\cM\big)\ge d - 1\,.
\end{equation*}
When $d>1$,  this gives a non-trivial lower bound.   Obviously, when $d=1$ the lower bound is  trivial.
\end{remark}

From a purely number theoretic point of view,  Problem~\ref{prob3+} is of particular interest  when the manifold is a curve ($d=1$). It has a  well-known connection to the famous and notorious problem posed by Wirsing (1961) and later restated in a stronger form by Schmidt \cite[pg.~258]{Schmidt-1980}.  This we now briefly describe.  The Wirsing-Schmidt conjecture is concerned with the approximation of real numbers by algebraic numbers of bounded degree.  The proximity of the approximation is measured in terms of the height of the algebraic numbers.  Recall,  that given a polynomial $P$ with integer coefficients, the height $H(P)$ of $P$ is defined to be the maximum of the absolute values of the coefficients of $P$. In turn the height $H(\alpha)$ of an
algebraic number $\alpha$ is the height of the minimal defining polynomial $P$ of $\alpha$ over $\Z$.

\begin{conjecture}[Wirsing-Schmidt]\label{prob4+}
{\em Let $n\ge2$ and $\xi$ be any real number that is not algebraic of degree $\le n$. Then there exists a constant $C=C(n,\xi)$ and infinitely many algebraic numbers $\alpha$ of degree $\le n$, such that}
\begin{equation}\label{e123}
|\xi-\alpha|<C  \,  H(\alpha)^{-n-1}   \, .
\end{equation}
\end{conjecture}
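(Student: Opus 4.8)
The plan is to reduce Conjecture~\ref{prob4+} to the Diophantine statement about the Veronese curve contained in Problem~\ref{prob3+}, and to isolate that statement as the genuine obstruction. Throughout fix $n\ge2$ and a real $\xi$ that is not algebraic of degree $\le n$, and set $\bm\xi:=(\xi,\xi^2,\dots,\xi^n)$, a point on the Veronese curve $\mathcal V_n:=\{(t,t^2,\dots,t^n):t\in\R\}$ --- a prototypical analytic non-degenerate submanifold of $\R^n$. Write $w_n^*(\xi)$ for the supremum of those $w$ for which $|\xi-\alpha|\le H(\alpha)^{-w-1}$ holds for infinitely many algebraic $\alpha$ of degree $\le n$, and $w_n(\xi)$ for the supremum of those $w$ for which $|q_0+q_1\xi+\dots+q_n\xi^n|\le|\vvv q|^{-w}$ holds for infinitely many $\vvv q=(q_0,\dots,q_n)\in\Z^{n+1}\setminus\{\vvv0\}$. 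Then \eqref{e123} is exactly the assertion $w_n^*(\xi)\ge n$. First I would record the two classical inputs: Dirichlet's bound $w_n(\xi)\ge n$, which is precisely Corollary~\ref{cor2} applied to $\bm\xi$; and Wirsing's transference inequality $w_n^*(\xi)\ge\tfrac12\bigl(w_n(\xi)+1\bigr)$, obtained by passing from a very small integer polynomial value $P(\xi)$ to a root of $P$ near $\xi$ and splitting off its height. Together these give only $w_n^*(\xi)\ge\tfrac12(n+1)$; the entire task is to upgrade the factor $\tfrac12$ to $1$.

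The key observation is \emph{where} the factor $\tfrac12$ is lost. Wirsing's argument is lossless precisely when the sequence of best integer polynomial approximations to $\xi$ of degree $\le n$ does not degenerate --- equivalently, in the language of \S\ref{FR}, when the trajectory $\{g_tu_{\bm\xi}\Z^{n+1}:t>0\}$ does not eventually spend almost all of its time arbitrarily deep in the cusp of $X_{n+1}$. Making this precise should be a parametric-geometry-of-numbers computation along the lines of Davenport \& Schmidt and Roy: one wants to show that if $\bm\xi$ is \emph{non-singular} --- indeed if $\bm\xi$ merely fails to be $\kappa'$-Dirichlet improvable for some fixed $\kappa'>0$ along an unbounded set of scales $Q$ --- then along that set of scales the $(n+1)$ successive minima of the relevant family of lattices are comparable, the transference runs without loss, and one concludes $w_n^*(\xi)\ge n$, with the constant $C$ in \eqref{e123} depending only on $n$ and $\kappa'$. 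This reduces the conjecture to understanding $\Sing(n)\cap\mathcal V_n$: if $\Sing(n)\cap\mathcal V_n\neq\mathcal V_n$ one already gets the conjecture off a full-measure subset of $\mathcal V_n$, and the full conjecture follows once the singular case on $\mathcal V_n$ is separately handled.

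That residual singular case I would treat by turning the pathology into a tool: if $\bm\xi$ \emph{is} singular then for every $Q$ there is an integer polynomial $P_Q$ of degree $\le n$ with $|P_Q(\xi)|=o(Q^{-n})$ and $H(P_Q)\le Q$ --- an over-abundance of very good relations --- and one can hope to splice consecutive such relations into a self-improving construction producing algebraic approximants $\alpha$ meeting \eqref{e123} directly; this is the mechanism behind Roy's sharp results in the first non-trivial cases. The $n=2$ instance of the whole programme is exactly the theorem of Davenport \& Schmidt establishing $w_2^*(\xi)\ge2$ for all non-quadratic irrational $\xi$.

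The hard part is unambiguous: it is Problem~\ref{prob3+} for $\cM=\mathcal V_n$, namely $\dim\bigl(\Sing(n)\cap\mathcal V_n\bigr)<1$. The methods behind Theorem~\ref{thm6}, and behind the proof of Theorem~\ref{prob5+} above --- the $\delta$-escape-of-mass estimate of Kadyrov, Kleinbock, Lindenstrauss \& Margulis (Theorem~\ref{thm4}), applied to covers of balls in the \emph{full} space $\R^n$ --- are blind to the curvature constraint and do not descend to $\mathcal V_n$. A successful attack would presumably require a version of that escape-of-mass estimate that is sensitive to non-degeneracy, refining the quantitative non-divergence estimates of Kleinbock \& Margulis that underpin Theorem~\ref{thm17} so as to control not merely that trajectories starting on $\cM$ fail to diverge, but the \emph{rate} at which such trajectories can plunge into the cusp. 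Until such an estimate exists, even the qualitative inequality \eqref{e122} for $\cM=\mathcal V_n$, and with it the Wirsing--Schmidt conjecture for $n\ge3$, remains out of reach; for $n=2$, where Theorem~\ref{thm6} gives $\dim\Sing(2)=4/3$ but the intersection with a curve must still be controlled by hand, the conjecture is nonetheless known by the work cited above.
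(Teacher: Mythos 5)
There is no proof to recover here: the statement you were asked about is stated in the paper as the \emph{Wirsing--Schmidt conjecture}, an open problem. The paper offers no proof of it (it records only that the case $n=1$ is Dirichlet's corollary, that $n=2$ is the theorem of Davenport and Schmidt, and that for $n\ge3$ only partial results are known), so your proposal cannot be measured against a proof of the paper --- and, as you yourself concede in your final sentence, your argument does not establish the conjecture either.

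Concretely, the part of your plan that works is exactly the paper's unnumbered Lemma: if $(\xi,\xi^2,\dots,\xi^n)\notin\Sing(n)$ then Wirsing--Schmidt holds for $\xi$, which the paper notes follows by a standard adaptation of the argument in \cite[Appendix~B]{MR3425389}; combined with an upper bound on $\dim\big(\Sing(n)\cap\cV_n\big)$ this would only bound the dimension of the set of potential counterexamples, which is precisely how the paper uses it in connection with Problem~\ref{prob3+}. The genuine gap is the singular case: your suggestion of ``splicing consecutive relations into a self-improving construction'' is not an argument but a hope, and it is exactly where the difficulty lives --- singularity of $(\xi,\dots,\xi^n)$ gives polynomials $P$ with $|P(\xi)|$ very small, but to produce algebraic approximants satisfying \eqref{e123} one must control $|P'(\xi)|$ from below (otherwise the nearest root of $P$ need not be within $C\,H(P)^{-n-1}$ of $\xi$), and without such control Wirsing's transference only yields the exponent $\tfrac12(n+1)$, not $n$. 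No mechanism is offered to supply that control for singular $\xi$ when $n\ge3$ (Roy-type constructions are specific to $n=2$), so the proposal reduces the conjecture to an open problem rather than proving it; it should be presented as a reduction and a research programme, not as a proof.
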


\noindent Note that when $n=1$ the conjecture is trivially true since it coincides with the classical corollary to  Dirichlet's theorem  -- the first theorem stated in this chapter.  For $n=2$ the conjecture was proved  by Davenport $\&$ Schmidt (1967). For $n\ge3$ there are only partial results. For recent progress and an overview of previous results we refer the reader to \cite{BadzSchl} and references within.

The connection between the Wirsing-Schmidt conjecture and Problem~\ref{prob3+}  comes about via the  well know fact that the former is intimately related to singular points on the Veronese curves $\cV_n:=\{ (\xi,\xi^2,\dots,\xi^n) : \xi \in \R \} $.

\begin{lemma}
Let $n \ge 2$ and $\xi \in \R$.  If $(\xi,\xi^2,\dots,\xi^n)\not\in\Sing(n)$, then the Wirsing-Schmidt conjecture holds for $\xi$.
\end{lemma}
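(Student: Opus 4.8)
The plan is to recast the hypothesis as a \emph{uniform} lower bound on the values $|P(\xi)|$ of integer polynomials $P$ of degree $\le n$, extract suitable such polynomials from Dirichlet's theorem, and then read off algebraic numbers of degree $\le n$ close to $\xi$ from their roots --- this is exactly the strategy of Wirsing.

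First I would unfold the hypothesis. Writing $\bm\xi:=(\xi,\xi^2,\dots,\xi^n)$, the assumption $\bm\xi\notin\Sing(n)$ means, by Definition~\ref{def5}, that there are $\kappa'>0$ and an increasing sequence of integers $Q_k\to\infty$ such that for every nonzero $(p,\vvv q)\in\Z\times\Z^n$ with $|\vvv q|\le Q_k$ one has $|q_1\xi+\dots+q_n\xi^n+p|\ge\kappa'Q_k^{-n}$. Associating to $(p,\vvv q)$ the polynomial $P(X):=p+q_1X+\dots+q_nX^n\in\Z[X]$, and noting that $|p|\ll_{n,\xi}Q_k$ whenever $|P(\xi)|\le1$ (while nonzero constant polynomials satisfy the bound trivially), this says: every nonzero $P\in\Z[X]$ with $\deg P\le n$ and $H(P)\le Q_k$ satisfies
$$|P(\xi)|\ \ge\ \kappa'\,Q_k^{-n}.$$
On the other hand, Theorem~\ref{thm3} applied to $\bm\xi$ with $Q=Q_k$ produces, for each $k$, a polynomial $P_k\in\Z[X]$ of degree $\le n$ with $1\le H(P_k)\ll_{n,\xi}Q_k$ and $|P_k(\xi)|<Q_k^{-n}$; in particular $H(P_k)\to\infty$ and $|P_k(\xi)|\ll_{n,\xi}H(P_k)^{-n}$.

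The next step is to convert a well-chosen polynomial into an algebraic number. It suffices to exhibit, for infinitely many $k$, an integer polynomial $P$ of degree $\le n$ with $H(P)\to\infty$ and both
$$|P(\xi)|\ \ll\ H(P)^{-n}\qquad\text{and}\qquad |P'(\xi)|\ \gg\ H(P)\,.$$
Indeed, for such a $P$ the higher Taylor coefficients of $P$ at $\xi$ are $\ll_{n,\xi}H(P)$, so once $H(P)$ is large a Rouch\'e/Newton argument locates a \emph{simple real} root $\alpha$ of $P$ with $|\xi-\alpha|\ll|P(\xi)|/|P'(\xi)|\ll H(P)^{-n-1}$. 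Then $\alpha$ is algebraic of degree $\le n$, its minimal polynomial divides $P$, so $H(\alpha)\ll_nH(P)$ by the standard bound on the height of divisors of an integer polynomial (Gelfond's lemma), and hence $|\xi-\alpha|\ll H(P)^{-n-1}\le C\,H(\alpha)^{-n-1}$. Since $|\xi-\alpha|\to0$ and $\xi$ is not algebraic of degree $\le n$, these $\alpha$ are pairwise distinct along an infinite set of $k$ and their heights tend to infinity; this is precisely the Wirsing--Schmidt conclusion for $\xi$.

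The hard part is establishing the displayed pair of inequalities for infinitely many $k$ --- that is, producing good polynomial approximations to $\xi$ that are \emph{non-degenerate} in the sense that $|P'(\xi)|$ is as large as the height permits (an arbitrary good approximation only yields the far weaker $|\xi-\alpha|\ll H(P)^{-1-1/n}$ for its nearest root). This is where non-singularity must be used essentially, and it is the delicate point of Wirsing's argument. The plan is to argue by contradiction: if every good approximation $P$ --- in particular every $P_k$ above and its near-competitors --- had $|P'(\xi)|=o(H(P))$, then near $\xi$ the polynomial $P$ would be small simultaneously with its derivative, i.e. the rational hyperplane $\{P=0\}$ would approach the Veronese curve $\cV_n=\{(t,t^2,\dots,t^n):t\in\R\}$ \emph{tangentially} near the point $\bm\xi$. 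One then expects to exploit the second-order non-degeneracy (the nonvanishing curvature) of $\cV_n$ to turn this persistent tangential behaviour into a genuine improvement of the \emph{uniform} exponent of approximation of $\bm\xi$ --- producing, for every large $Q$, a nonzero integer polynomial $P$ with $\deg P\le n$, $H(P)\le Q$ and $|P(\xi)|=o(Q^{-n})$ --- which contradicts the uniform lower bound $|P(\xi)|\ge\kappa'Q_k^{-n}$ obtained above. Carrying this out quantitatively is the crux: it amounts, in effect, to Wirsing's conditional resolution of his own conjecture, and one would expect the parametric geometry of numbers --- the evolution of the successive minima of $g_tu_{\bm\xi}\Z^{n+1}$ in the notation of \S\ref{FR} --- restricted to the curve $\cV_n$ to be the natural bookkeeping device, specialising for $n=2$ to the classical argument of Davenport and Schmidt.
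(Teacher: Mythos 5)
Your reduction is the right one and matches the standard strategy: unfold non-singularity into a uniform lower bound $|P(\xi)|\ge\kappa' Q_k^{-n}$ for all nonzero integer polynomials of degree $\le n$ with $1\le|\vvv q|\le Q_k$, and observe that the conjecture follows once one produces, with $H(P)\to\infty$, polynomials satisfying $|P(\xi)|\ll H(P)^{-n}$ and $|P'(\xi)|\gg H(P)$ (nearest root plus Gelfond's lemma then give $|\xi-\alpha|\ll H(\alpha)^{-n-1}$, and distinctness follows since $\xi$ is not algebraic of degree $\le n$). But the heart of the lemma is precisely the production of such polynomials, and there your proposal stops: you announce a contradiction scheme in which persistent smallness of $|P'(\xi)|$ would mean tangential approach of rational hyperplanes to the Veronese curve, to be converted via curvature and the parametric geometry of numbers into an improvement of the uniform exponent. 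None of this is carried out, and you yourself concede it "amounts to Wirsing's conditional resolution of his own conjecture". That is a genuine gap, not a proof; moreover it misreads where the hypothesis enters — no curvature or dynamical bookkeeping is needed at all.

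The missing step is a single application of Minkowski's theorem for linear forms (Theorem~\ref{MTLF}), applied in the coordinates $\big(P(\xi),\,P'(\xi),\,a_2,\dots,a_n\big)$ of the coefficient vector $(a_0,\dots,a_n)$; this change of variables is unimodular. Fix a small $\epsilon=\epsilon(n,\xi)$ and, at each scale $Q=Q_k$ from the non-singularity hypothesis, take the box $|P(\xi)|<\kappa' Q^{-n}$, $|P'(\xi)|\le(\kappa'\epsilon^{n-1})^{-1}Q$, $|a_j|\le\epsilon Q$ for $2\le j\le n$; the product of the side lengths is $1$, so a nonzero integer polynomial $P$ in this box exists. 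It cannot be constant (else $|P(\xi)|=|a_0|\ge1$), and non-singularity forbids $1\le|\vvv q|\le Q$, so $|\vvv q|>Q$; since $|a_j|\le\epsilon Q<Q$ for $j\ge2$, necessarily $|a_1|>Q$, whence $|P'(\xi)|\ge|a_1|-\sum_{j\ge2}j|a_j||\xi|^{j-1}\ge Q/2$ once $\epsilon\sum_{j\ge2}j|\xi|^{j-1}\le\tfrac12$, while the box bounds give $H(P)\ll_{n,\xi,\kappa'}Q$. Thus $|P(\xi)|\ll H(P)^{-n}$ and $|P'(\xi)|\gg H(P)$ at every scale $Q_k$, which is exactly what your reduction needs. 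So the hypothesis is used not to rule out "tangential" configurations by a compactness/curvature argument, but simply to push the Minkowski point out of the region $|\vvv q|\le Q$ in the only direction left open by the box, namely $a_1$ — this is the (easy) adaptation of the argument in \cite[Appendix~B]{MR3425389} that the paper has in mind.
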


\noindent The proof of the lemma is pretty standard. For example, it easily follows by adapting the argument appearing in \cite[Appendix~B]{MR3425389} in an obvious manner.  A straightforward consequence of the lemma is that any  upper bound for $\dim\big(\Sing(n)\cap\cV\big)$ gives an upper bound on the dimension of the set of potential counterexamples to the Wirsing-Schmidt conjecture. When $n \geq 3$, currently  we do not even know that the set of potential counterexamples has dimension strictly less than one - the trivial bound.  Clearly, progress on Problem~\ref{prob3+}  with  $\cM = \cV_n$ would rectify this gaping hole in our knowledge.

We now turn our attention to the question raised  at the start of this subsection; namely,  whether good separation and the total DoF of 3/2 within the setup of Example~3 can be achieved for a larger class of channel coefficients?
To start with we recall that the $7$-dimensional submanifold  $\cM$ of $\R^n$ arising from the implicit dependency within \eqref{newxi} is both analytic and  non-degenerate.  Thus it falls under the umbrella of  Problem~\ref{prob3+}. In turn, on naturally adapting the argument used to establish Proposition~\ref{ohyes2}, a consequence of the upper bound  \eqref{e122} is the following statement: for all choice of channel coefficients $\{h_{ii}, h_{12},h_{13},h_{21},h_{23},h_{31},h_{32}\} $  $(i=1,2,3)$ except on a subset of strictly positive codimension,   the minimum separation $\dminVBi$ between the associated  points $y_i$ given by \eqref{eqnnew30+}  satisfies \eqref{eqn108}.
The upshot is that if true, Problem~\ref{prob3+} enables us to obtain good separation for a larger class of channel coefficients than the (unconditional) Khintchine-Groshev approach outlined in  \S\ref{IloveMarking}.

As we have seen within the setup of Example~2, in order to improve  the `almost all'  DoF result (Theorem~\ref{MotDOF}) of Motahari et al   we  need to work with the jointly singular set  $\Sing^2_{\bm f}(n) $ appearing in Theorem~\ref{prob3}.  This theorem  provides  a non-trivial upper bound for the Hausdorff dimension of such sets  and  is the key to establishing the stronger DoF statement Theorem~\ref{MotDOF2}.  With this in mind, we suspect that progress on the following problem is at the heart of improving the `almost all'  DoF result  for the three-user GIC  (see \eqref{eqn66+}) obtained via the Khintchine-Groshev approach.  In any case, we believe that the problem is of interest in its own right.  Recall, that $\Sing^m(n)$  is given by \eqref{tired} and is  the  jointly singular set for  systems of linear forms.

\begin{problem}\label{prob5++}
Let $k,\ell,m,d\in\N$, $n=k+\ell$, $U\subset\R^d$ and $V\subset\R^m$ be open subsets. Suppose that
$\bm f:U\to\R^k$ and $\bm g:U\to\R^\ell$ are polynomial non-degenerate maps. For each $\vvv u\in U$ and $\vvv v\in V$ let
$\Xi(\vvv u,\vvv v)$ be the matrix with columns $(v_i\bm f(\vvv u),\bm g(\vvv u))^t$ and let
$$
\Sing^m_{\bm f,\bm g}(n):=\Big\{(\vvv u,\vvv v)\in U\times V: \Xi(\vvv u,\vvv v)\in\Sing^m(n)\Big\}\,.
$$
Verify if
$$
\dim\left(\Sing^m_{\bm f,\bm g}(n)\right)<d+m\,.
$$
\end{problem}

Of course, it would be natural to generalise  the problem by  replacing `polynomial' with `analytic' and  by widening  the scope of the  $n \times m$  matrices under consideration.  On another front,  staying within the setup of Problem~\ref{prob5++}, it would be highly desirable to  determine  the actual value for the Hausdorff dimension of the set $\Sing^m_{\bm f,\bm g}(n)$. This represents a major challenge.

\vspace*{2ex}

\noindent{\em Acknowledgements.} The authors are grateful to Anish Ghosh and Cong Ling for their valuable comments on an earlier version of this chapter.  We would also like to thank
Mohammad Ali Maddah-Ali for bringing  \cite{NM13} to our attention  (see Remark~\ref{mate}).

%sttyles: plain, alpha, unsrt, abbrv, siam,
%       amsalpha, amsplain, apalike, ieeetr
%       mkalfa, mkthe, mknum, mkalpha, mkalf --- for russian papers

%\bibliographystyle{plain}
%\bibliography{proc}

\end{document}